\newif\ifpdflatex
  \newcommand{\Gr}[2]{\psfig{file=#1.pdf,width=#2}}
  \newcommand{\Gr}[2]{\psfig{file=#1.eps,width=#2}}
\newcommand{\Z}{{\mathbb Z}}
\newcommand{\R}{{\mathbb R}}
\newcommand{\D}{{\mathcal D}}
\newcommand{\C}{{\mathcal C}}
\newcommand{\Int}{\int_{S^1}}
\newcommand{\half}{\frac{1}{2}}
\newcommand{\thalf}{\tfrac{1}{2}}
\newcommand{\dfdt}{\frac{\partial\! f}{\partial t}}
\newcommand{\eps}{\varepsilon}
\newcommand{\supp}{\operatorname{supp}}
\newcommand{\dist}{\operatorname{dist}}
\newcommand{\sign}{\operatorname{sign}}
\newcommand{\Vol}{\operatorname{Vol}}
\renewcommand{\Re}{\operatorname{Re}}
\newtheorem{theorem}{Theorem}[section]
\newtheorem{proposition}[theorem]{Proposition}
\newtheorem{corollary}[theorem]{Corollary}
\newtheorem{lemma}[theorem]{Lemma}
\theoremstyle{remark}
\newtheorem{remarks}[theorem]{Remarks}
\theoremstyle{definition}
\newtheorem{definition}[theorem]{Definition}
\newtheorem{example}[theorem]{Example}
\begin{document}

\title[Stability of peak solutions]%
      {Stability of peak solutions \\ of a non-linear transport equation on the circle}
\author{Edith Geigant}
\address{Bayreuth}
\email{egeigant@gmx.de}
\author{Michael Stoll}
\address{Mathematisches Institut, Universit\"at Bayreuth, 95440 Bayreuth, Germany}
\email{Michael.Stoll@uni-bayreuth.de}
\date{August 27, 2011}

\subjclass[2010]{45K05; 45J05, 92B05}

\maketitle

\section{Introduction}

In this paper we analyze the pattern forming ability and pattern stability
for a one-dimensional non-linear transport-diffusion equation on the circle.
The distinguishing feature of this equation is the non-local turning velocity
that is determined by interactions between particles in various orientations ---
velocity is given by a convolution term of an interaction rate \( V \) with the
distribution function. In its general form, it also includes a diffusion term.

In Section~\ref{S:basic} we establish some basic facts on the transport-diffusion
equation, like conservation of mass and symmetries, correspondence between solutions
of higher periodicity for \( V \) and general solutions for its `rolled-up' version
\( V_n \). We also discuss the corresponding
equation on the real line and its relation with the equation on the circle.
Linearization near the constant stationary state provides conditions on the
interaction rate \( V \) and on the smallness of the diffusion coefficient such
that non-constant stationary states exist.
These conditions are related to some conditions of Primi et~al.~\cite{Primi},
whose work deals with the same transport-diffusion equation.
But Primi et~al.\ succeeded in proving existence of \( n \)-peaks like steady states
(for \( n \ge 1 \)) using {\em weaker} conditions on \( V \) (so their result is
stronger) using a method completely independent of linearization.
Finally, we show that the constant stationary state is globally stable if the
diffusion coefficient is large enough or the interaction rate small enough.
For this proof we use the Fourier transformed version of the transport-diffusion equation.

The method used by Primi et~al.~\cite{Primi} to construct stationary solutions
allows no conclusions on the stability of the pattern.
Mogilner et~al.~\cite{Mogilner2} argued that without diffusion a single peak is stable
if the interaction rate is everywhere attracting (this is a positivity condition
on \( V \)).
They used a discrete setting with peaks of equal masses.
In Section~\ref{stability-of-position} we generalize that method to peaks with
different masses and analyze the stability of \( n \ge 1 \) equidistant peaks
of equal mass by a linearization argument.

The transport-diffusion equation is closely related to a non-linear
integro-differential equation on \( S^1 \), in which the non-linearity comes from
interactions between particles, and in which particles jump instantaneously from
one orientation to another.
We will comment on these relations in the discussion in Section~\ref{discussion},
which concludes this paper.
For the limit of exact alignment in this jump process the stability of a single peak
has been analyzed by linearization near the peak, see Geigant~\cite{Geigant2000}.
In Section~\ref{small-perturbations} we use linearization of the transport equation
(without diffusion) near \( n \)-peak solutions to see whether they are stable or not.
It turns out that a single peak is stable if \( V \) is everywhere attracting and that
two opposite peaks are stable {\em up to changes in masses} if \( V \) unites
attracting and repelling features.
Most interestingly, \( n \ge 3 \) equidistant peaks of equal masses are in general
{\em not} stable.
Under appropriate conditions on \( V \) the number of peaks is invariant, but
positions and masses of the peaks may change slightly.
A technical difficulty is that solutions are invariant with respect to translations,
therefore `stability' always means stability {\em up to translations}.

The integro-differential equation with exact alignment has also been studied by
Kang et~al.~\cite{Kang}.
They find that solutions converge in the sense of distributions to peak solutions
if the support of the starting function consists of disjoint, small intervals
and if the interaction is attracting.
In Section~\ref{compact-support} we prove that the same holds true for the transport
equation, but for the transport equation the assumptions on \( V \) are more local.

An important fact in all proofs of convergence to peaks is invariance of the first
moment or barycenter. However, the first moment is only invariant for the linearized
equations or if the initial function has sufficiently small support.
Example~\ref{Ex:moment} in Section~\ref{stability-of-position}
shows that the first moment (if defined in a naive way) is in general {\em not}
invariant. This is caused by the discontinuity of the function \( \theta \)
given by identifying \( S^1 \) minus a point with $\left]-\thalf,\thalf\right[$.
The same is true for the related integro-differential equation.%
\footnote{Therefore we think that equation~(26) in Kang et~al.~\cite{Kang} and
conclusions based on~(26) are not correct. The authors have informed
us that an erratum is in preparation.}
In Section~\ref{S:bary} we show how barycenters can be defined locally and
when they are invariant.

Section~\ref{numerical-algorithms} presents an algorithm for fast computation
of solutions for the transport-diffusion equation.
It is based on the Fourier transform of the transport-diffusion equation.
The same method has been used by Geigant and Stoll~\cite{GeigantStoll}
for the integro-differential equation.
A second algorithm, namely an iteration scheme, is used to find stationary solutions
of the transport-diffusion equation.
It is essentially the scheme for which Primi et~al.~\cite{Primi} prove convergence
for suitable turning rates \( V\!\), and indeed, in our computations we observe
that their assumptions on \( V \) are necessary for convergence.

In Section~\ref{examples} we present a number of examples obtained using our
numerical algorithms.
The first example shows the simultaneous bifurcation of first and second mode;
near that bifurcation point a stable mixed-mode solution and a backward bifurcation
exist.
The second example shows a stable two-peaks like solution where the two peaks are
not opposite.
In the third example {\em stable} one-peak and two-peaks like solutions exist
at the same parameter values.
This contradicts a conjecture of Primi et~al.~\cite{Primi} that a certain feature
of \( V \) (basically its shape and the sign of the primitive) allows conclusions
on the number of developing peaks, namely one versus two.
In the last example we show that pattern formation may occur even if \( V \)
is nowhere attracting.


\section{The non-linear transport equation with diffusion} \label{S:basic}

Let \( S^1 = \R / \Z \) be a circle of length~1.
If we denote by $p : \R \to S^1 = \R/\Z$ the canonical projection, then we have associated maps $p^*$ from functions on~$S^1$ to functions on~$\R$, where $p^*(f) = f \circ p$ is the associated 1-periodic function on~$\R$, and $p_*$ from (sufficiently fast decaying) functions on~$\R$ to functions on~$S^1$, where
\[ p_*(g)(\theta) = \sum_{x \in \R, p(x) = \theta} g(x) \,. \]

\begin{definition} \label{D:interval}
  A \emph{closed interval}~$I$ on~$S^1$ is a closed connected subset that is not all
  of~$S^1$. Then $I = p(I')$ for some closed interval $I' = [a,b] \subset \R$
  (such that $b-a < 1$), and we write $I = [p(a),p(b)]$ and call $\alpha = p(a)$ the
  \emph{lower end} and $\beta = p(b)$ the \emph{upper end} of~$I$.
  If $h$ is a function on~$S^1$, we write
  \[ \int_a^b h(\theta)\,d\theta
       = \int_\alpha^\beta h(\theta)\,d\theta
       = \int_{[\alpha,\beta]} h(\theta)\,d\theta
       = \int_a^b p^*(h)(x)\,dx .
  \]
  If $\theta, \psi \in I$, we write $\theta-\psi \in \R$ for the difference
  $\theta'-\psi'$ where $\theta', \psi' \in I'$ are such that
  $p(\theta') = \theta$, $p(\psi') = \psi$.
\end{definition}

If $V : S^1 \to \R$ is a function and $I = \left]a,b\right[ \subset \R$ is an interval
such that $p(I) \neq S^1$, we will (for simplicity) say that
`$V > 0$ on~$\left]a,b\right[$' if $V > 0$ on $p(I)$ (equivalently, $p^*(V) > 0$
on~$I$); similarly for half-open or closed intervals. In the same way, we
write $V(a)$ for $V(p(a))$ if $a \in \R$.


\subsection{The equation on the circle} \label{S:eqncircle} \strut

We want to model a process that describes the change of orientation of filaments
over time. The orientation is given by an `angle' $\theta \in S^1$.
The density of filaments at time \( t \ge 0 \) with orientation \( \theta \in S^1 \)
is given by \( f(t,\theta) \).
The filaments turn continuously where the velocity of turning is determined by interactions with other filaments on the circle.
At the same time there is random reorientation.
This kind of dynamics is described by the following transport equation with diffusion.
\begin{equation} \label{tp}
  \dfdt (t,\theta)
    = D \frac{\partial^2 f}{\partial \theta^2}(t,\theta)
        + \frac{\partial}{\partial \theta}
             \bigl((V*f(t,\cdot)) \cdot f(t,\cdot)\bigr)(\theta) ,
\end{equation}
where \( D \ge 0 \) is the diffusion coefficient and
\( (V*f)(\theta) = \Int V(\theta-\psi) f(\psi) \,d\psi \) is the convolution
of~\( V \) with~\( f \) and gives the negative velocity of turning of filaments with orientation~\( \theta \).

We assume that the interaction function \( V : S^1 \to \R \) is {\em odd}, because interactions with filaments on opposite sides of \( \theta \) must have similar consequences.
In particular, \( V(0) = 0 \), i.e.,~there is no repulsion or attraction of filaments with the same orientation, and \( V(\half) = 0 \), i.e.,~there is no interaction with filaments of opposite orientation.
The sign of \( V(\theta) \) is important.
If \( V(\theta) > 0 \) for some interaction angle \( 0 < \theta < \half \) then the two filaments move towards each other, we call this `attracting'; if on the other hand \( V(\theta) < 0 \) for some \( 0 < \theta < \half \) then the distance between the filaments becomes greater, they are `repelling each other'.
For odd \( V \in \C^{\infty}(S^1) \) Primi et~al.~\cite{Primi} prove a-priori estimates by which unique existence of smooth solutions of equation~\eqref{tp} can be shown.

The following easy statement will be useful.

\begin{lemma} \label{L:symmint}
  Let $V, f \in \C(S^1)$ with $V$ odd. Then
  \[ \int_{S^1} (V * f)(\theta) f(\theta)\,d\theta = 0 . \]
\end{lemma}

\begin{proof}
  We have
  \[ \int_{S^1} (V * f)(\theta) f(\theta)\,d\theta
       = \int_{S^1} \int_{S^1} V(\theta-\psi) f(\psi) \,d\psi\,f(\theta)\,d\theta
       = \int_{S^1} \int_{S^1} V(\theta-\psi) f(\psi) f(\theta) \,d\psi\,d\theta .
  \]
  If we swap $\psi$ and~$\theta$ in the last integral, it changes sign (since
  $V$ is odd); therefore it must be zero.
\end{proof}

The following proposition states some basic facts on equation~\eqref{tp}.

\begin{proposition} \label{invariance}
  Equation~\eqref{tp} preserves mass, non-negativity, axial symmetry with respect
  to any axis and periodicity of initial functions.
  Moreover, the solution space is invariant under the group~$O(2)$ of translations
  and reflections on~$S^1$.
\end{proposition}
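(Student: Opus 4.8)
The plan is to handle mass conservation by a direct integration and to derive all of the \emph{preservation} and \emph{invariance} assertions from one structural fact: equation~\eqref{tp} is equivariant under the $O(2)$-action on $S^1$, so that applying any such transformation to a solution produces another solution. Combined with uniqueness of smooth solutions (the a-priori estimates of Primi et al.~\cite{Primi}), equivariance upgrades to preservation of any symmetry already present in the initial datum.

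First I would dispose of mass conservation. Differentiating under the integral, $\frac{d}{dt}\int_{S^1} f\,d\theta = \int_{S^1}\dfdt\,d\theta$, and since both terms on the right-hand side of~\eqref{tp} are $\theta$-derivatives of $1$-periodic functions, each integrates to zero over $S^1$; hence the total mass stays constant. (Lemma~\ref{L:symmint} gives the analogous energy-type identity, but mass conservation needs only the fundamental theorem of calculus on the circle.)

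The core computation is $O(2)$-equivariance, and it suffices to treat the generators: translations $T_c:\theta\mapsto\theta+c$ and the reflection $R:\theta\mapsto-\theta$. For a translate $g(t,\theta)=f(t,\theta-c)$ the substitution $\psi\mapsto\psi-c$ in the convolution gives $(V*g)(\theta)=(V*f)(\theta-c)$, so every term of~\eqref{tp} for $g$ is the corresponding term for $f$ evaluated at $\theta-c$, and $g$ solves~\eqref{tp} (oddness of $V$ is not needed here). For the reflection $g(t,\theta)=f(t,-\theta)$ the oddness of $V$ enters decisively: substituting $\psi\mapsto-\psi$ and using $V(\theta+\psi)=-V(-\theta-\psi)$ yields $(V*g)(\theta)=-(V*f)(-\theta)$, so the flux satisfies $(V*g)\,g=-\bigl((V*f)f\bigr)(-\theta)$. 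Differentiating this in $\theta$ produces a second sign change that cancels the first, while $D\,\partial_\theta^2 g(\theta)=D\,\partial_\theta^2 f(-\theta)$; hence the whole right-hand side for $g$ at $\theta$ equals that for $f$ at $-\theta$, which is $\partial_t f(-\theta)=\partial_t g(\theta)$, so $g$ again solves~\eqref{tp}. These two cancelling sign changes---one from the oddness of $V$, one from the chain rule in the transport term---are the only place where the hypothesis on $V$ is used.

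Axial symmetry, periodicity, and $O(2)$-invariance of the solution space then follow formally. Any reflection in an axis through angle $a$ is $T_{2a}\circ R$, a translate of the basic reflection, so it too maps solutions to solutions; if the initial datum is invariant under such a reflection, the reflected solution shares that datum, and uniqueness forces the solution to remain symmetric. Likewise $1/n$-periodicity is invariance under $T_{1/n}$, and the same uniqueness argument preserves it in time. The remaining and analytically most delicate point is \emph{non-negativity}. For $D>0$ I would use the parabolic minimum principle: writing the transport term as $(V*f)\,\partial_\theta f + f\,\partial_\theta(V*f)$ turns~\eqref{tp} into a linear parabolic equation for $f$ with smooth coefficients---the nonlocal factor $V*f$ being a known function along the given solution---and at a spatial minimum where $f=0$ one has $\partial_\theta f=0$ and $\partial_\theta^2 f\ge 0$, forcing $\partial_t f\ge 0$, so $f$ cannot cross zero. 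For $D=0$ the equation is a pure transport equation and non-negativity follows instead from the representation of $f$ along characteristics. I expect this step to be the main obstacle, since it requires invoking the correct comparison principle and treating the degenerate case $D=0$ separately from $D>0$.
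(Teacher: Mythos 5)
Your proposal is correct and rests on the same structural idea as the paper's proof: the right-hand side of equation~\eqref{tp} is $O(2)$-equivariant (with oddness of $V$ needed exactly for the reflections), and combined with uniqueness this yields preservation of symmetry and periodicity. The only difference is one of detail: where the paper simply cites Primi et~al.\ for conservation of mass and non-negativity, you supply a direct integration for the former and a sketch (maximum principle for $D>0$, characteristics for $D=0$) for the latter, correctly flagging non-negativity as the step requiring the most care.
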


\begin{proof}
  Preservation of mass and positivity are shown by Primi et~al.~\cite{Primi}.
  The remaining statements follow from the observation that the operator
  on the right hand side of equation~\eqref{tp} is $O(2)$-equivariant
  (for the reflections in~$O(2)$, this uses that $V$ is odd).
\end{proof}

Since the partial differential equation~\eqref{tp} lives on~$S^1$, the equation
turns into a discrete system of ODEs when it is Fourier transformed.
We denote by
\[ f_k = \int_{S^1} f(\theta) e^{-2 \pi i k \theta} \,d\theta
   \quad \text{for $k \in \Z$}
\]
the \( k \)-th Fourier coefficient of \( f : S^1 \to \R \).
Since \( f \) is real, \( f_k = \bar{f}_{-k} \);
if \( f \) is even or odd, then \( f_k \in \R \) or \( f_k \in i \R \), respectively.
For differentiable functions one has
\( (f')_k = 2 \pi i k f_k \),
the Fourier coefficients of a convolution are
\( (V*f)_k = V_k f_k \),
and the Fourier transform of a product is the convolution of the Fourier series,
\( (f \cdot g)_k = \sum_{l \in \Z} f_l g_{k-l} \).

Hence the Fourier transform of the transport-diffusion equation~\eqref{tp} is
\begin{align}
  \dot{f}_k(t)
    &= -(2 \pi k)^2 D f_k + 2 \pi i k \sum_{l \in Z} V_l f_l f_{k-l} \nonumber \\
    &= c_k \,f_k + 4 \pi k \, \sum_{l \in \Z, l \neq 0,k} v_l f_l f_{k-l}
         \quad \text{for $k \in \Z$,}
      \label{fou_k}
\end{align}
where the eigenvalues \( c_k \in \R \) of the system and the \( v_k \in \R \) are defined as
\begin{equation}
\label{ev}
  c_k = -(2 \pi k)^2 D + 4 \pi k f_0 v_k
  \quad \mbox{and} \quad
  v_k = \int_{0}^{\half} V(\theta) \sin(2 \pi k \theta) \,d\theta = \tfrac{i}{2} V_k .
\end{equation}
Mass conservation is reflected by the equation \( \dot{f}_0 = 0 \).
Using $f_{-k} = \bar{f}_k$, the equations with $k < 0$ are redundant.
We have \( c_k = c_{-k} = \bar{c}_k \in \R \) because $v_k = -v_{-k}$.
By scaling \( D \) and \( V \) one may assume that the mass is~1,
i.e.,~\( f_0 = \Int f(t,\theta) \,d\theta = 1 \) for all \( t \ge 0 \),
which we will do from now on.


\subsection{Stationary solutions} \strut

In this subsection, we collect some results on stationary solutions of
equation~\eqref{tp}. We begin with some estimates on the shape of a
stationary solution.

\begin{proposition}[Estimates for stationary solutions] \label{estimates}
  Any stationary solution of equation~\eqref{tp} with $D > 0$ satisfies the following
  ordinary differential equation on \( S^1 \).
  \begin{equation} \label{stateq}
    D \frac{d f}{d \theta} = -(V*f) \cdot f .
  \end{equation}
  Let $C = \max V/D$ and assume that $f \ge 0$ is a stationary solution
  of~\eqref{tp} with mass~1. Then for $\theta_1, \theta_2 \in S^1$ we have
  \[ f(\theta_2) \le f(\theta_1) e^{C |\theta_1 - \theta_2|}
     \quad\text{and}\quad
     f(\theta_2) \ge f(\theta_1) e^{-C |\theta_1 - \theta_2|} .
  \]
  In particular,
  \[ \frac{\max f}{\min f} \le e^{C/2} \]
  and therefore
  \( \quad   \max f \le e^{C/2} \min f \le e^{C/2}
     \quad\text{and}\quad \min f \ge e^{-C/2} \max f \ge e^{-C/2} .
  \)

  In any maximum \( \theta_{\max} \) of a stationary solution we have
  \[ \frac{d^2 f}{d \theta^2}(\theta_{\max}) \ge - \frac{\max V'}{D} f(\theta_{\max}) ;
  \]
  in any minimum \( \theta_{\min} \) of a stationary solution we have
  \[ \frac{d^2 f}{d \theta^2}(\theta_{\min}) \le - \frac{\min V'}{D} f(\theta_{\min}) ;
  \]
\end{proposition}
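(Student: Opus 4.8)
The plan is to proceed in two stages: first establish the first-order equation~\eqref{stateq}, and then read off all of the estimates from it.

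To obtain~\eqref{stateq} I would set $\partial f/\partial t = 0$ in~\eqref{tp} and integrate once in~$\theta$, which gives $D f' + (V*f)\cdot f = C_0$ for some constant $C_0 \in \R$. To pin down $C_0$ I integrate this identity over all of~$S^1$: the term $\int_{S^1} f'\,d\theta$ vanishes because $f$ is periodic, and $\int_{S^1}(V*f)\,f\,d\theta = 0$ by Lemma~\ref{L:symmint} (this is where the oddness of~$V$ enters). Since the mass is~$1$ and $S^1$ has length~$1$, the right-hand side integrates to $C_0$, so $C_0 = 0$ and~\eqref{stateq} follows.

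For the exponential bounds I would regard $g := V*f$ as a fixed continuous function and view~\eqref{stateq} as the linear ODE $f' = -(g/D)\,f$, whose solution satisfies $f(\theta_2) = f(\theta_1)\exp\bigl(-\tfrac1D\int_{\theta_1}^{\theta_2} g\,d\theta\bigr)$; in particular this formula shows $f>0$ everywhere, so $\min f > 0$ and the ratio below is well defined. The key pointwise estimate is $|g| \le \max V$: since $V$ is odd its range is symmetric about~$0$, so $\max V = \max|V|$, and because $f \ge 0$ has mass~$1$ the convolution is a weighted average of values of~$V$, giving $|(V*f)(\theta)| \le \max V \int_{S^1} f = \max V$. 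Integrating over the shorter arc between $\theta_1$ and $\theta_2$ (so that $|\theta_1 - \theta_2| \le \thalf$) yields both of the claimed inequalities $f(\theta_2) \le f(\theta_1) e^{C|\theta_1-\theta_2|}$ and $f(\theta_2) \ge f(\theta_1) e^{-C|\theta_1-\theta_2|}$ with $C = \max V/D$. Taking $\theta_1,\theta_2$ to be the locations of the minimum and the maximum gives $\max f/\min f \le e^{C/2}$; the absolute bounds then follow because the average value of~$f$ is~$1$, so $\min f \le 1 \le \max f$.

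For the second-derivative estimates I would differentiate~\eqref{stateq} to get $D f'' = -(V'*f)\,f - (V*f)\,f'$, using $(V*f)' = V'*f$. At an extremum $\theta_\ast$ one has $f'(\theta_\ast) = 0$, so $D f''(\theta_\ast) = -(V'*f)(\theta_\ast)\,f(\theta_\ast)$. Bounding the convolution exactly as before — $(V'*f)(\theta_\ast) \le \max V'$ at a maximum and $(V'*f)(\theta_\ast) \ge \min V'$ at a minimum, again using $f \ge 0$ and mass~$1$ — and multiplying by $-f(\theta_\ast)/D \le 0$ (which reverses the inequality) gives precisely the two claimed bounds. The only points that need care are the determination $C_0 = 0$, where Lemma~\ref{L:symmint} together with the mass normalization is essential, and the identification $\max V = \max|V|$ coming from oddness; everything else is a routine application of the solution formula for the ODE and the averaging bound on convolutions.
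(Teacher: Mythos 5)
Your proposal is correct and follows essentially the same route as the paper's proof: integrating once and using Lemma~\ref{L:symmint} to kill the constant, bounding the convolution by $\max V$ (via $f\ge 0$ and mass~$1$) to get the exponential estimates, and evaluating the differentiated equation at an extremum where $f'=0$. The only cosmetic difference is that you solve the linear ODE explicitly and make the identity $\max V = \max|V|$ (from oddness) explicit, whereas the paper works directly with the differential inequality $f' \le Cf$; both are fine.
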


If $D = 0$ in equation~\eqref{tp}, then $f$ is a stationary solution if and
only if $(V*f) \cdot f = 0$.

These results can be interpreted as follows.

i) If the diffusion coefficient \( D \) is large compared to \( V\!\), then any stationary solution is near the constant solution (or only the constant solution exists).

ii) The inequalities for \( \max f \) have a large right hand side when \( D \)
becomes small.
That leads us to expect that with decreasing \( D \) solutions may become large and maxima may be sharp peaks (the curvature is large).

iii) If the minimum of \( f \) is small, then it is wide (the curvature is small).

\begin{proof}
  (See also Primi et~al.(\cite{Primi}) for the derivation of the ODE.)
  Let
  \( D \frac{d^2 f}{d \theta^2} + \frac{d}{d \theta} ((V*f) \, f) = 0 \).
  Then
  \( D \frac{d f}{d \theta} + (V*f) f = c \),
  where \( c \) is some arbitrary constant of integration.
  But the integral over \( S^1 \) of the first and second terms is zero
  (see Lemma~\ref{L:symmint} for the second term), so $c = 0$.

  If \( f \) is nonnegative, then
  \[ \frac{d f}{d \theta}(\theta)
      \le \frac{1}{D} \max V \int f(\psi) \, d\psi \, f(\theta)
      = C f(\theta) ;
  \]
  Therefore by integrating from $\theta_1$ to~$\theta_2$, we get
  \[ f(\theta_2) \le f(\theta_1) e^{C |\theta_2 - \theta_1|} ; \]
  the other inequality follows by symmetry. We can then take $\theta_1 = \theta_{\min}$
  and $\theta_2 = \theta_{\max}$ to be points where $f$ attains its minimum and
  maximum, respectively.

  Now, let \( \theta_{\max} \) be a maximum of $f$
  Because \( f'(\theta_{\max}) = 0 \) we have
  \[ \frac{d}{d \theta} ((V*f) \cdot f)(\theta_{\max})
       = (V'*f)(\theta_{\max}) f(\theta_{\max})
           + (V*f)(\theta_{\max}) f'(\theta_{\max})
       \le (\max V') f(\theta_{\max}) .
  \]
  Therefore,
  \[ 0 = D f''(\theta_{\max})
          + \partial_\theta ((V*f) \cdot f)(\theta_{\max})
       \le D f''(\theta_{\max}) + (\max V') f(\theta_{\max}) .
  \]
  If \( \theta_{\min} \) is a minimum of $f$,
  then the estimate in \( \theta_{\min} \) can be proved in the same way.
\end{proof}

Primi et al.~\cite{Primi} use the ODE~\eqref{stateq} to set up an iterative
procedure for approximating stationary solutions. See Section~\ref{SubS:iteration}
below.

We state a simple consequence of equation~\eqref{stateq}.

\begin{proposition} \label{P:perstat}
  Assume $D > 0$ in equation~\eqref{tp}. If $V$ is $\tfrac{1}{n}$-periodic
  and odd, then any stationary solution of~\eqref{tp} must also be
  $\tfrac{1}{n}$-periodic.
\end{proposition}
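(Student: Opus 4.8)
The plan is to start from the ODE characterization of stationary solutions in Proposition~\ref{estimates}: for $D>0$, any stationary solution $f$ satisfies \eqref{stateq}, i.e.\ $D f' = -(V*f)\cdot f$ on $S^1$. I would then treat $w := V*f$ as a coefficient function and exploit that periodicity of $V$ forces periodicity of $w$ independently of $f$. The reason the naive symmetry argument does not close immediately is that $f(\cdot + \tfrac1n)$ is again a stationary solution, but without a uniqueness statement this need not equal $f$; the ODE viewpoint circumvents this.

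First I would record the Fourier-side reason that $w$ is $\tfrac1n$-periodic. An odd, $\tfrac1n$-periodic $V$ has $V_k = 0$ whenever $n\nmid k$ (and in particular $V_0 = 0$). Since $(V*f)_k = V_k f_k$, the Fourier coefficients of $w = V*f$ vanish off $n\Z$, which is precisely the statement that $w$ is $\tfrac1n$-periodic. I regard this as the crux of the argument: it is what lets me reduce a nonlocal equation to a linear local one with periodic coefficient, regardless of the periodicity of $f$ itself.

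Next I would solve the ODE as the linear first-order equation $f' = -(w/D)f$ with continuous $\tfrac1n$-periodic coefficient, giving $f(\theta) = f(0)\exp\!\bigl(-\tfrac1D\int_0^\theta w\bigr)$. Writing $W(\theta) = \int_0^\theta w$ and using $\tfrac1n$-periodicity of $w$, a short substitution in the integral yields $W(\theta + \tfrac1n) = W(\theta) + A$ with $A = \int_0^{1/n} w$ independent of $\theta$, hence $f(\theta+\tfrac1n) = e^{-A/D} f(\theta)$.

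Finally I would show $A = 0$, which upgrades this quasi-periodicity to genuine $\tfrac1n$-periodicity. Since $f$ is a function on $S^1$ it is $1$-periodic, so applying the shift $n$ times gives $f(\theta) = f(\theta+1) = e^{-nA/D} f(\theta)$ and forces $A=0$; equivalently $nA = \int_0^1 w = (V*f)_0 = V_0 f_0 = 0$ because $V$ is odd. Thus $f(\theta+\tfrac1n) = f(\theta)$. I expect the only delicate point to be this last step: the solution formula by itself only delivers $f(\cdot + \tfrac1n) = \lambda f$ for some positive constant $\lambda$, and one must invoke global well-definedness on $S^1$ (together with $V_0 = 0$) to conclude $\lambda = 1$.
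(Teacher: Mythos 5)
Your proposal is correct and follows essentially the same route as the paper: establish that $V*f$ is $\tfrac1n$-periodic, use the ODE~\eqref{stateq} to get $f(\theta+\tfrac1n)=\gamma f(\theta)$ for a positive constant $\gamma$, and conclude $\gamma=1$ from $\gamma^n=1$. The only cosmetic differences are that you verify the periodicity of $V*f$ via Fourier coefficients where the paper uses a direct change of variables in the convolution, and that you integrate the ODE explicitly rather than arguing through $f'/f$.
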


\begin{proof}
  $V(\theta + \tfrac{1}{n}) = V(\theta)$ for all $\theta \in S^1$ implies
  \[ (V*f)(\theta + \tfrac{1}{n})
       = \int_{S^1} V(\theta + \tfrac{1}{n} - \psi) f(\psi) \,d\psi
       = \int_{S^1} V(\theta - \psi) f(\psi) \,d\psi
       = (V*f)(\theta)
  \]
  for functions~$f$ on~$S^1$. If $f$ is a stationary solution of~\eqref{tp},
  then it is a solution of the ODE~\eqref{stateq}. Since $D > 0$, it follows
  that $f'/f = -\frac{1}{D} (V*f)$ is $\tfrac{1}{n}$-periodic. This implies
  that $f(\theta + \tfrac{1}{n}) = \gamma f(\theta)$ with some constant~$\gamma$,
  and since $f > 0$, we must have $\gamma > 0$. Since obviously $\gamma^n = 1$,
  we have $\gamma = 1$, and $f$ is $\tfrac{1}{n}$-periodic.
\end{proof}


\subsection{Solutions with higher periodicity} \strut

Let \( n \ge 1 \) and \( V : S^1 \to \R \) be odd and continuous.
We will be interested in \( \tfrac{1}{n} \)-periodic solutions of equation~\eqref{tp}.
To understand these, the following functions \( V_n \) and~$\tilde{V}_n$
will be useful.
\[ 
  V_n(\theta) = \sum_{j=0}^{n-1} V\Bigl(\theta - \frac{j}{n}\Bigr), \qquad
  \tilde{V}_n(\theta) = \sum_{j=0}^{n-1} V\Bigl(\frac{\theta - j}{n}\Bigr) .
\] 
\( V_n \) and $\tilde{V}_n$ are continuous and odd; $V_n$ is $\tfrac{1}{n}$-periodic.
In particular,
\[ V_n\Bigl(\frac{k}{2 n}\Bigr) = 0 \quad \text{for $0 \le k < 2 n$.} \]

The following result shows that instead of considering $\frac{1}{n}$-periodic
solutions of equation~\eqref{tp}, we can consider solutions of~\eqref{tp}
without higher periodicity, when we modify the parameters $D$ and~$V$ accordingly.

\begin{proposition} \label{1/n-periodic}
  Let \( n \ge 1 \) and \( V \) be odd.
  Then there is a 1-to-1 correspondence between \( \tfrac{1}{n} \)-periodic solutions
  \( f \) of equation~\eqref{tp} and solutions \( \tilde{f} \) of
  \begin{equation} \label{E:per}
    \frac{\partial\!\tilde{f}}{\partial t}(t,\theta)
       = n^2 D \frac{\partial^2\!\tilde{f}}{\partial\theta^2}(t,\theta)
          + \frac{\partial}{\partial\theta}
              \bigl((\tilde{V}_n * \tilde{f}(t,\cdot)) \tilde{f}(t,\cdot)\bigl)(\theta),
  \end{equation}
  namely via \( \tilde{f}(t,n\theta) = f(t,\theta) \).
\end{proposition}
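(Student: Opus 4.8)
The plan is to view the stated correspondence $\tilde f(t,n\theta) = f(t,\theta)$ as a rescaling $\phi = n\theta$ of the angular variable and to verify that this substitution transforms equation~\eqref{tp} into equation~\eqref{E:per}. First I would check that the correspondence is a genuine bijection. If $f$ is $\tfrac1n$-periodic, then $\tilde f(t,\phi) := f(t,\phi/n)$ is well defined on $S^1 = \R/\Z$, because increasing $\phi$ by~$1$ increases $\phi/n$ by the period~$\tfrac1n$. Conversely, for any $\tilde f$ on $S^1$ the function $f(t,\theta) := \tilde f(t,n\theta)$ is automatically $\tfrac1n$-periodic, since $\tilde f$ is $1$-periodic. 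These two assignments are mutually inverse, giving the claimed 1-to-1 correspondence; it then remains to match the two equations term by term.

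The time derivative and the diffusion term are handled by the chain rule. Writing $\theta = \phi/n$, one has $\partial_t \tilde f(t,\phi) = \partial_t f(t,\theta)$ and $\frac{\partial^2 \tilde f}{\partial\phi^2}(t,\phi) = \tfrac1{n^2}\frac{\partial^2 f}{\partial\theta^2}(t,\theta)$, so the factor $n^2$ in front of $D$ in~\eqref{E:per} is exactly what is needed to recover $D\,\partial_\theta^2 f$.

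The substantive step is the transport term, and here I would establish the convolution identity $(\tilde V_n * \tilde f)(\phi) = n\,(V*f)(\phi/n)$. Starting from $(\tilde V_n * \tilde f)(\phi) = \Int \tilde V_n(\phi-\sigma)\,f(\sigma/n)\,d\sigma$, I substitute $\sigma = n\psi$; each summand $V\bigl(\tfrac{\phi-\sigma-j}{n}\bigr)$ in the definition of $\tilde V_n$ becomes $V\bigl(\theta - \psi - \tfrac{j}{n}\bigr)$, and summing over $0\le j<n$ produces $V_n(\theta-\psi)$. This turns the convolution into $n\int_0^{1/n} V_n(\theta-\psi)\,f(\psi)\,d\psi$. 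On the other hand, splitting $(V*f)(\theta) = \int_0^1 V(\theta-\psi)f(\psi)\,d\psi$ into the $n$ subintervals $[\tfrac{j}{n},\tfrac{j+1}{n}]$ and using the $\tfrac1n$-periodicity of $f$ to fold them onto $[0,\tfrac1n]$ gives exactly $\int_0^{1/n} V_n(\theta-\psi)f(\psi)\,d\psi$, which proves the identity. Multiplying by $\tilde f(\phi) = f(\theta)$ and differentiating in $\phi$ introduces a further factor $\tfrac1n$ that cancels the $n$, so the transport term of~\eqref{E:per} at $\phi$ equals the transport term of~\eqref{tp} at $\theta = \phi/n$.

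Assembling the three contributions shows that $\tilde f$ satisfies~\eqref{E:per} at $\phi$ precisely when $f$ satisfies~\eqref{tp} at $\theta = \phi/n$; as $\phi$ ranges over $S^1$ the point $\theta$ ranges over the fundamental domain $[0,\tfrac1n)$, and $\tfrac1n$-periodicity of $f$ makes validity there equivalent to validity of~\eqref{tp} everywhere. The main obstacle I anticipate is the bookkeeping in the convolution identity: keeping the rescaling factor $n$ correct, matching the sum defining $\tilde V_n$ against the folded convolution integral, and recognizing the intermediate $\tfrac1n$-periodic kernel $V_n$ (note the consistency $\tilde V_n(n\theta) = V_n(\theta)$). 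Once that identity is in hand, the remaining steps are routine applications of the chain rule.
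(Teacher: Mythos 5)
Your proposal is correct and follows essentially the same route as the paper: both verify the key convolution identity $(\tilde V_n * \tilde f)(\phi) = n\,(V*f)(\phi/n)$ by folding the convolution integral onto the fundamental domain $[0,\tfrac1n]$ via the intermediate $\tfrac1n$-periodic kernel $V_n$, and then match the remaining terms by the chain rule. Your explicit check that the two assignments are mutually inverse is a small point the paper leaves implicit, but the substance is identical.
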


\begin{proof}
  Let \( f(t,\cdot) \) be a \( \tfrac{1}{n} \)-periodic solution of~\eqref{tp}.
  Define $\tilde{f}(t,\theta) = f(t,\tfrac{\theta}{n})$. Then $\tilde{f}$
  has mass~1, and we have
  \begin{align*}
    (V*f(t,\cdot))\Bigl(\frac{\theta}{n}\Bigr)
      &= \sum_{j=0}^{n-1} \int_{\frac{j}{n}}^{\frac{j+1}{n}}
                          V\Bigl(\frac{\theta}{n}-\psi\Bigr) f(t,\psi) \,d\psi
       = \sum_{j=0}^{n-1} \int_{0}^{\frac{1}{n}}
              V\Bigl(\frac{\theta}{n}-\Bigl(\psi+\frac{j}{n}\Bigr)\Bigr)
              f\Bigl(t,\psi+\frac{j}{n}\Bigr) \,d\psi \\
      &= \int_{0}^{\frac{1}{n}}
                 V_n\Bigl(\frac{\theta}{n}-\psi\Bigr) f(t,\psi) \,d\psi
       = \frac{1}{n} \int_{0}^1 V_n\Bigl(\frac{\theta-\psi}{n}\Bigr)
                                f\Bigl(t,\frac{\psi}{n}\Bigr) \,d\psi
       = \frac{1}{n} \bigl(\tilde{V}_n * \tilde{f}(t,\cdot)\bigr)(\theta) .
  \end{align*}
  Also,
  \( \frac{\partial\!\tilde{f}}{\partial \theta}(t,\theta)
      = \tfrac{1}{n} \frac{\partial\!f}{\partial \theta}(t,\frac{\theta}{n})
  \).
  Therefore,
  \begin{align*}
    \frac{\partial\!\tilde{f}}{\partial t}(t,\theta)
       = \dfdt\Bigl(t,\frac{\theta}{n}\Bigr)
      &= D \frac{\partial^2\!f}{\partial \theta^2} \Bigl(t,\frac{\theta}{n}\Bigr)
           + \frac{\partial}{\partial\theta} \bigl(V*f(t,\cdot) f(t,\cdot)\bigr)
              \Bigl(\frac{\theta}{n}\Bigr) \\
      &= n^2 D \frac{\partial^2\!\tilde{f}}{\partial \theta^2}(t,\theta)
           + n \frac{\partial}{\partial \theta}
                \Bigl(\frac{1}{n} (\tilde{V}_n*\tilde{f}(t,\cdot))
                        \tilde{f}(t,\cdot)\Bigr)(\theta) \\
      &= n^2 D \frac{\partial^2\!\tilde{f}}{\partial \theta^2}(t,\theta)
           + \frac{\partial}{\partial \theta}
              \bigl((\tilde{V}_n*\tilde{f}(t,\cdot)) \tilde{f}(t,\cdot)\bigr)(\theta) .
  \end{align*}
  The converse can be shown in the same way.
\end{proof}

This shows in particular that diffusion acts more strongly on solutions of
higher periodicity. In fact, we have $\max \tilde{V}_n \le n \max V\!$, so
the quotient $C = \max V/D$ in Proposition~\ref{estimates} will be multiplied
by a number $\le \tfrac{1}{n}$.

In terms of the Fourier transformed system~\eqref{fou_k}, we have
$f_k = 0$ for $n \nmid k$, $\tilde{f}_k = f_{nk}$, and
$(\tilde{V}_n)_k = n V_{nk}$. So we only look at the equations with $k$ a
multiple of~$n$ and replace $nk$ by~$k$ to obtain the system corresponding
to equation~\eqref{E:per}.


\subsection{The equation on the real line} \strut

A similar PDE can also be considered with $\R$ instead of~$S^1$ as the spatial domain,
\begin{equation} \label{tpR}
  \frac{\partial g}{\partial t}(t,x)
    = D \frac{\partial^2 g}{\partial x^2}(t,x)
       + \frac{\partial}{\partial x}\bigl((W*g(t,\cdot)) \cdot g(t,\cdot)\bigr)(x) \,.
\end{equation}
Here $W : \R \to \R$ is odd and $g(t, \cdot)$ is assumed to decay sufficiently fast, so that the convolution $W*g(t,\cdot)$ is defined. Note that the convolution is here given by an integral over all of~$\R$.
There is the following relation between equations \eqref{tp} and~\eqref{tpR}.

\begin{proposition} \label{P:2eqns}
  Let $V : S^1 \to \R$ be odd and define $W = p^*(V)$ (which is just $V$ considered
  as a 1-periodic function on~$\R$). Let $g : \left[0,T\right[ \times \R \to \R$
  be a solution of equation~\eqref{tpR}.
  Then $(t,\theta) \mapsto f(t,\theta) = p_*(g(t,\cdot))(\theta)$ is a solution of
  equation~\eqref{tp}.
\end{proposition}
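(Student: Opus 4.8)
The plan is to push the real-line equation \eqref{tpR} forward under the summation map $p_*$ and to check that each term lands on the corresponding term of the circle equation \eqref{tp}. Writing a representative $x \in \left[0,1\right[$ for $\theta \in S^1$, we have $f(t,\theta) = p_*(g(t,\cdot))(\theta) = \sum_{n \in \Z} g(t, x + n)$. The whole argument rests on two facts: that $p_*$ commutes with the time derivative and intertwines the spatial derivatives $\partial_x$ and $\partial_x^2$ on~$\R$ with $\partial_\theta$ and $\partial_\theta^2$ on~$S^1$; and that $p_*$ and $p^*$ intertwine the two convolutions. Once these are in hand, applying $p_*$ to \eqref{tpR} yields \eqref{tp} termwise. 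Conceptually this is a summation-over-translates computation in the same spirit as the proof of Proposition~\ref{1/n-periodic}, now with an infinite fibre sum.

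The linear terms are immediate. Since the defining series for~$f$ converges locally uniformly together with its derivatives (by the assumed fast decay of $g(t,\cdot)$; see the last paragraph), differentiation may be carried out termwise, so $p_*(\partial_t g) = \partial_t f$ and $p_*(\partial_x^2 g) = \partial_\theta^2 f$, and likewise $p_*(\partial_x h) = \partial_\theta (p_* h)$ for the functions~$h$ occurring below.

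The key step is the nonlinear term. First I would establish the convolution identity $p^*(V * f) = W * g$, that is, $(V * f)(t, p(x)) = (W * g)(t, x)$ for all $x \in \R$. To see this, write the circle convolution as an integral over the fundamental domain $\left[0,1\right[$, insert $f = p_*(g)$, and interchange the resulting sum over the fibre with the integral; the substitution $y = \psi + m$ reassembles $\sum_m \int_0^1$ into a single integral $\int_\R$, and since $W = p^*(V)$ is $1$-periodic one has $V(\theta - \psi) = W(x - y)$ under this substitution (with $\theta = p(x)$). This gives exactly $\int_\R W(x-y)\,g(t,y)\,dy = (W*g)(t,x)$. Now apply $p_*$ to the product $(W*g)(t,\cdot)\,g(t,\cdot)$: because $(V*f)(t,p(x))$ is constant along each fibre $p^{-1}(\theta)$, it factors out of the fibre sum, leaving $p_*\bigl((W*g)\,g\bigr)(\theta) = (V*f)(t,\theta)\sum_{p(x)=\theta} g(t,x) = (V*f)(t,\theta)\,f(t,\theta)$. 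Intertwining $\partial_x$ with $\partial_\theta$ then turns the nonlinear term of \eqref{tpR} into that of~\eqref{tp}, completing the identification.

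The only real analytic content, and the step I expect to require care, is justifying the termwise differentiation of $p_*(g)$ and the interchange of summation and integration in the convolution identity. Both are covered by the standing hypothesis that $g(t,\cdot)$ decays sufficiently fast: I would make this precise by requiring $g(t,\cdot)$ and its first two $x$-derivatives to be dominated by a fixed envelope whose translates $\sum_n (\,\cdot\,)(x+n)$ converge locally uniformly (for instance $O(|x|^{-2})$ decay), whereupon the Weierstrass $M$-test and dominated convergence legitimise every interchange used above. With the decay assumption fixed in this way, the remaining computation is entirely routine.
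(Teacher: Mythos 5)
Your proposal is correct and follows essentially the same route as the paper: apply $p_*$ termwise to~\eqref{tpR}, use the identity $\int_{\R} p^*(V)(x-y)\,h(y)\,dy = \int_{S^1} V(p(x)-\psi)\,p_*(h)(\psi)\,d\psi$ to convert the convolutions, and factor the (fibre-constant) convolution out of the sum. The only difference is that you spell out the decay hypotheses justifying the interchanges of sum, integral and derivative, which the paper leaves implicit.
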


\begin{proof}
  We have
  \begin{align*}
    \frac{\partial\! f}{\partial t}(t,\theta)
      &= \sum_{x: p(x)=\theta} \frac{\partial g}{\partial t}(t, x) \\
      &= \sum_{x: p(x)=\theta}
          \Bigl(D \frac{\partial^2 g}{\partial x^2}(t,x)
                 + \frac{\partial}{\partial x}\bigl((W*g(t,\cdot))(x) \cdot g(t,x)\bigr)
               \Bigr) \\
      &= D \frac{\partial^2\! f}{\partial \theta^2}(t, \theta)
           + \frac{\partial}{\partial \theta}
               \Bigl(\sum_{x: p(x)=\theta}
                        \int_{-\infty}^{\infty} W(x-y) g(t,y)\,dy \cdot g(t,x)\Bigr) \\
      &= D \frac{\partial^2\! f}{\partial \theta^2}(t, \theta)
           + \frac{\partial}{\partial \theta}
               \Bigl(\int_{S^1} V(\theta-\psi) f(t,\psi)\,d\psi
                      \cdot \sum_{x: p(x)=\theta} g(t,x) \Bigr) \\
      &= D \frac{\partial^2\! f}{\partial \theta^2}(t, \theta)
           + \frac{\partial}{\partial \theta}
                \bigl((V*f(t,\cdot)) f(t,\cdot)\bigr)(\theta) \,.
  \end{align*}
  Here we use that
  \[ \int_{-\infty}^{\infty} p^*(V)(x-y) h(y)\,dy
      = \int_{S^1} V(p(x)-\psi) p_*(h)(\psi)\,d\psi \,. \qedhere
  \]
\end{proof}

The advantage of equation~\eqref{tpR} over~\eqref{tp} is that it is easily shown
to not only preserve mass, but also the first moment (or, equivalently, the
barycenter) of~$g(t,\cdot)$, whereas the notion of `first moment' usually does
not even make sense on~$S^1$.

\begin{proposition} \label{P:tpRbasic}
  Let $g$ be a solution of~\eqref{tpR}.
  \begin{enumerate}
    \item For any $a \in \R$, $(t,x) \mapsto g(t,x-a)$ is again a solution
          of~\eqref{tpR}.
    \item $(t,x) \mapsto g(t,-x)$ is again a solution of~\eqref{tpR}.
    \item $\displaystyle\int_{-\infty}^\infty g(t,x)\,dx$ is constant.
    \item $\displaystyle\int_{-\infty}^\infty x g(t,x)\,dx$ is constant.
  \end{enumerate}
\end{proposition}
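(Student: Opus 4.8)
The plan is to verify each of the four claims directly by differentiating under the integral (or summation) sign and using the structure of equation~\eqref{tpR}, together with the oddness of~$W$. Parts (1) and (2) are symmetry statements that follow from the translation- and reflection-invariance of the operator on the right-hand side of~\eqref{tpR}; this is the exact analogue on~$\R$ of the $O(2)$-equivariance recorded in Proposition~\ref{invariance}. For part~(1), I would set $g_a(t,x) = g(t,x-a)$ and check that the diffusion term transforms correctly (translation commutes with $\partial_x^2$) and that the convolution is translation-equivariant, so $(W * g_a(t,\cdot))(x) = (W * g(t,\cdot))(x-a)$; plugging in shows $g_a$ satisfies~\eqref{tpR}. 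For part~(2), with $\hat{g}(t,x) = g(t,-x)$, the key point is that reflecting the spatial variable introduces a sign in $\partial_x$ but the convolution with the \emph{odd} function~$W$ produces a compensating sign, so the transport term is preserved; the diffusion term is manifestly even. These two parts are routine once the equivariance is made explicit.

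Parts (3) and (4) are the conservation laws, and here the strategy is to differentiate the relevant integral in~$t$, substitute the right-hand side of~\eqref{tpR}, and integrate by parts, discarding boundary terms at $\pm\infty$ by the assumed fast decay of $g(t,\cdot)$. For part~(3), differentiating $\int_{-\infty}^\infty g\,dx$ gives $\int_{-\infty}^\infty \partial_t g\,dx = \int_{-\infty}^\infty \partial_x(D\,\partial_x g + (W*g)\,g)\,dx$, which is the integral of a total derivative and hence vanishes. For part~(4), differentiating $\int_{-\infty}^\infty x\,g\,dx$ and substituting yields $\int_{-\infty}^\infty x\,\partial_x\bigl(D\,\partial_x g + (W*g)\,g\bigr)\,dx$; integrating by parts moves the derivative off the bracket and onto the factor~$x$, leaving $-\int_{-\infty}^\infty \bigl(D\,\partial_x g + (W*g)\,g\bigr)\,dx$. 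The diffusion part $-D\int \partial_x g\,dx$ vanishes as a total derivative, so what remains is to show $\int_{-\infty}^\infty (W*g(t,\cdot))(x)\,g(t,x)\,dx = 0$.

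This last identity is the heart of part~(4) and is the analogue of Lemma~\ref{L:symmint} on the real line. I would establish it by the same swapping argument: write the integral as the double integral $\int\!\!\int W(x-y)\,g(t,y)\,g(t,x)\,dy\,dx$ over $\R^2$, and observe that interchanging the roles of $x$ and~$y$ sends the integrand to $W(y-x)\,g(t,x)\,g(t,y) = -W(x-y)\,g(t,x)\,g(t,y)$ because $W$ is odd, so the integral equals its own negative and must vanish. Thus $\int_{-\infty}^\infty x\,g\,dx$ is constant.

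The main obstacle, such as it is, lies not in the algebra but in justifying the manipulations under the assumed decay hypothesis on $g(t,\cdot)$: differentiating under the integral sign, the vanishing of boundary terms at $\pm\infty$ after integration by parts in part~(4), and the applicability of Fubini's theorem to the double integral. Since the statement only requires `sufficiently fast decaying' $g$, I would keep these regularity and integrability conditions as standing assumptions rather than tracking explicit decay rates, and simply note that they guarantee the finiteness of all the integrals involved and the vanishing of the boundary contributions.
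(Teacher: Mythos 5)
Your proposal is correct and follows essentially the same route as the paper: equivariance of the right-hand side for parts (1) and (2), integration of a total derivative for part (3), and integration by parts followed by the odd-kernel swapping argument (the real-line analogue of Lemma~\ref{L:symmint}) for part (4). The only difference is that you spell out the swapping argument explicitly where the paper simply cites the lemma.
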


\begin{proof}
  The first statement is clear (the operator on the right hand side is equivariant
  with respect to translations). Since $W$ is assumed to be odd, the right hand
  side is also equivariant with respect to $x \mapsto -x$, which implies the
  second statement. For the third statement, we compute
  \[ \frac{d}{dt} \int_{-\infty}^\infty g(t,x)\,dx
      = \int_{-\infty}^\infty
           \frac{\partial}{\partial x}
            \Bigl(D \frac{\partial g}{\partial x}(t,x)
                   + (W * g(t,\cdot))(x) g(t,x) \Bigr) \,dx \\
      = 0 \,,
  \]
  using the decay properties of~$g$. For the last statement, we have
  \begin{align*}
    \frac{d}{dt} \int_{-\infty}^\infty x g(t,x)\,dx
      &= \int_{-\infty}^\infty x
           \frac{\partial}{\partial x}
             \Bigl(D \frac{\partial g}{\partial x}(t,x)
                    + (W * g(t,\cdot))(x) g(t,x)\Bigr) \,dx \\
      &= - \int_{-\infty}^\infty
             \Bigl( D \frac{\partial g}{\partial x}(t,x)
                    + (W * g(t,\cdot))(x) g(t,x)\Bigr) \,dx \\
      &= - \int_{-\infty}^\infty \int_{-\infty}^\infty W(x-y) g(t,y) g(t,x) \,dy\,dx \\
      &= 0 \,,
  \end{align*}
  since $W$ is odd, compare Lemma~\ref{L:symmint}.
\end{proof}

Later, we will consider the case without diffusion (so with $D = 0$) in particular.
In this situation, compact support is preserved.

\begin{lemma} \label{L:suppbd}
  Assume that $W$ is bounded and that $D = 0$ in~\eqref{tpR}.
  Let $g \ge 0$ be a solution.
  If $g(0,\cdot)$ has support contained in~$[a,b]$, then the support of~$g(t,\cdot)$ is
  contained in~$[a-Ct,b+Ct]$ for all $t > 0$,
  where $C = \|W\|_\infty \|g(0,\cdot)\|_1$.
\end{lemma}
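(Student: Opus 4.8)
The plan is to read equation~\eqref{tpR} with $D = 0$ as a continuity equation with a bounded velocity field. Writing $v(t,x) = (W*g(t,\cdot))(x)$, the equation becomes $\partial_t g = \partial_x(v\,g)$, i.e.\ $\partial_t g + \partial_x(c\,g) = 0$ with transport velocity $c = -v$. The first observation I would record is that this velocity is bounded uniformly in $t$ and~$x$: since $g \ge 0$, Proposition~\ref{P:tpRbasic}(3) gives $\|g(t,\cdot)\|_1 = \int g(t,x)\,dx = \|g(0,\cdot)\|_1$ for all~$t$, and hence
\[
  |v(t,x)| = \Bigl|\int_{-\infty}^\infty W(x-y)\,g(t,y)\,dy\Bigr|
           \le \|W\|_\infty\,\|g(t,\cdot)\|_1 = C .
\]
So information travels along characteristics at speed at most~$C$, which is exactly the spreading rate claimed.

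To turn this into a rigorous support bound I would track the mass lying beyond the two moving boundaries $R(t) = b + Ct$ and $L(t) = a - Ct$. Set $M(t) = \int_{R(t)}^\infty g(t,x)\,dx$. Differentiating (Leibniz rule for the moving lower limit, then $\partial_t g = \partial_x(vg)$ under the integral, and evaluating the boundary term using the decay of~$g$ at $+\infty$) gives
\[
  M'(t) = -C\,g(t,R(t)) - v(t,R(t))\,g(t,R(t))
        = -\bigl(C + v(t,R(t))\bigr)\,g(t,R(t)) .
\]
Because $v \ge -C$ and $g \ge 0$, the right-hand side is $\le 0$; since $M(0) = 0$ (the initial support is in $[a,b]$) and $M \ge 0$, we conclude $M \equiv 0$. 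The symmetric computation for $N(t) = \int_{-\infty}^{L(t)} g(t,x)\,dx$ yields $N'(t) = \bigl(v(t,L(t)) - C\bigr)\,g(t,L(t)) \le 0$, using $v \le C$, and likewise $N \equiv 0$. Together these show $\supp g(t,\cdot) \subseteq [a-Ct,\,b+Ct]$.

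The step I expect to require the most care is the justification of the differentiation under the integral sign with a moving endpoint and, in particular, the vanishing of the boundary terms $[v\,g]$ at $\pm\infty$. This is where the standing assumption that $g(t,\cdot)$ decays sufficiently fast (so that $W*g$ is defined and bounded, and $g$ together with $vg$ is integrable) is used; with a smooth, fast-decaying solution the manipulations are routine. An alternative route through the explicit flow of the characteristic ODE $\dot X = -v(t,X)$ would give the bound just as transparently, but would need $v = W*g$ to be Lipschitz in~$x$ to guarantee a well-defined flow, an extra regularity demand that the mass-monotonicity argument above avoids. One should also note that the argument never needs the sign of $C \pm v$ to be strict: the product with $g \ge 0$ is what matters, so the conclusion is insensitive to characteristics that are momentarily tangent to the moving boundary.
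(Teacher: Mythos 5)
Your proof is correct and follows essentially the same route as the paper: both arguments bound the velocity $W*g$ by $C$ via mass conservation and then show that the mass beyond the moving front $b+Ct$ (resp.\ $a-Ct$) has non-positive time derivative, vanishes initially, and is non-negative, hence is identically zero. The only difference is presentational --- the paper freezes the boundary by the change of variables $g_+(t,x) = g(t,x+b+Ct)$ and uses $W*g + C = (W+\|W\|_\infty)*g \ge 0$, whereas you keep the moving limit and apply the Leibniz rule directly.
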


\begin{proof}
  Let $g_+(t,x) = g(t,x+b+Ct)$. We show that
  $\supp g_+(t,\cdot) \subset \left]-\infty, 0\right]$. This implies that
  $\supp g(t,\cdot) \subset \left]-\infty, b+Ct\right]$. The argument for the
  lower bound is similar.

  We have
  \begin{align*}
    \frac{\partial g_+}{\partial t}(t,x)
      &= \partial_t g(t, x+b+Ct) + C \partial_x g(t, x+b+Ct) \\
      &= \partial_x \bigl((W * g(t,\cdot) + C) \cdot g(t, \cdot)\bigr) (x+b+Ct) \\
      &= \partial_x \bigl((W + \|W\|_\infty)*g(t,\cdot) \cdot g(t, \cdot)\bigr)
            (x+b+Ct) \\
      &= \partial_x \bigl((W + \|W\|_\infty)*g_+(t,\cdot) \cdot g_+(t, \cdot)\bigr)(x)
  \end{align*}
  Now
  \begin{align*}
    \frac{d}{dt} \int_{0}^\infty g_+(t,x)\,dx
      &= \int_{0}^\infty
           \partial_x \bigl((W + \|W\|_\infty)*g_+(t,\cdot) \cdot g_+(t, \cdot)\bigr)
            (x)\,dx \\
      &= -\bigl((W + \|W\|_\infty)*g_+(t,\cdot) \cdot g_+(t, \cdot)\bigr)(0) \\
      &= -\int_{-\infty}^\infty
           (W(-y) + \|W\|_\infty) g_+(t,y)\,dy \cdot g_+(t,0) \\
      &\le 0 \,,
  \end{align*}
  since $g_+ \ge 0$ and $W + \|W\|_\infty \ge 0$. On the other hand,
  \[ \int_{0}^\infty g_+(0,x)\,dx = 0 \quad\text{and}\quad
     \int_{0}^\infty g_+(t,x)\,dx \ge 0 \,,
  \]
  so we must have $\displaystyle\int_{0}^\infty g_+(t,x)\,dx = 0$ for all~$t$.
\end{proof}

If $W(x)$ is positive for positive~$x$, we can say more.

\begin{proposition} \label{P:peak-on-R}
  Assume that $W$ is continuously differentiable with $W'(0) > 0$, that
  $W(x) > 0$ for $x > 0$ and that $D = 0$ in~\eqref{tpR}.
  Let $g \ge 0$ be a solution such that $\supp g(0,\cdot) \subset [a,b]$.
  Then $g(t,\cdot)$ has support contained in~$[a,b]$ for all $t > 0$, and it
  converges to a delta distribution $m \delta_c$ with
  $m = \|g(0,\cdot)\|_1$ and $mc = \int x g(0,x)\,dx$, in the sense that
  \[ \lim_{t\to\infty} \int_{-\infty}^\infty h(x) g(t,x)\,dx = m h(c) \]
  for all twice continuously differentiable functions $h : \R \to \R$.
\end{proposition}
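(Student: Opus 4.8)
The plan is to show that the dynamics on $[a,b]$ is a contraction that collapses all mass onto a single point. First I would record the two invariants already available: by Proposition~\ref{P:tpRbasic}(3)--(4), the mass $m = \int g(t,x)\,dx$ and the first moment $\int x g(t,x)\,dx$ are constant, so the \emph{only} possible limit is $m\delta_c$ with $mc = \int x g(0,x)\,dx$; the target $c$ is thus forced, and the real content is convergence. By Lemma~\ref{L:suppbd} (here with $\|W\|_\infty$ finite since $W$ is continuous and $1$-periodic), and more sharply by the sign hypothesis $W(x) > 0$ for $x > 0$, the support stays inside $[a,b]$ for all $t$: the boundary terms in the support argument have the right sign, so mass cannot leak out either end. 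This confinement is what lets me work on a fixed compact interval.

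The core idea is to find a Lyapunov-type quantity measuring the \emph{spread} of the distribution and show it decreases to zero. A natural candidate is the second moment about the barycenter, $Q(t) = \int (x-c)^2 g(t,x)\,dx$, or equivalently the interaction energy $E(t) = \iint U(x-y) g(t,x) g(t,y)\,dx\,dy$ where $U' = W$ (so $U$ is even with a strict minimum at $0$). I would differentiate $Q$ in $t$, integrate by parts using the $D=0$ form $\partial_t g = \partial_x((W*g)\,g)$, and aim to show $\dot Q \le 0$ with equality only when $g$ is concentrated at one point. Concretely,
\begin{align*}
  \frac{d}{dt} \int (x-c)^2 g(t,x)\,dx
    &= -\int 2(x-c)\,(W*g)(x)\,g(x)\,dx \\
    &= -\iint \bigl((x-c)-(y-c)\bigr) W(x-y)\,g(x)g(y)\,dx\,dy \\
    &= -\iint (x-y) W(x-y)\,g(x)g(y)\,dx\,dy \le 0,
\end{align*}
where the symmetrization in the second line uses oddness of $W$ (as in Lemma~\ref{L:symmint}), and the final inequality uses $x W(x) \ge 0$. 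Since $(x-y)W(x-y) = 0$ forces $|x-y| = 0$ on the support (because $W(s) > 0$ for $s > 0$), the integrand vanishes exactly when the support is a single point. This gives monotone decrease of a bounded nonnegative quantity, hence $\dot Q \to 0$, and I would then argue that the only configuration on $[a,b]$ consistent with $\dot Q = 0$ is $m\delta_c$.

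The passage from $\dot Q \to 0$ to actual convergence of $g(t,\cdot)$ is where I expect the main obstacle. Monotone decrease of $Q$ gives a limit $Q_\infty \ge 0$, but I must rule out $Q_\infty > 0$ and upgrade to convergence of the measures. The clean route is a compactness argument: the family $\{g(t,\cdot)\}_{t\ge 0}$ is a set of nonnegative measures of fixed mass $m$ supported in the fixed compact $[a,b]$, hence weak-$*$ precompact. Any weak-$*$ limit point $\mu$ along a time sequence must be a stationary configuration, and a LaSalle-type invariance principle (using that $Q$ is strictly decreasing off the collapsed states) forces $\mu = m\delta_c$, with $c$ pinned by conservation of the first moment. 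Since every limit point equals the same $m\delta_c$, the whole trajectory converges weak-$*$. Finally, testing against a twice continuously differentiable $h$ and expanding $h(x) = h(c) + h'(c)(x-c) + O((x-c)^2)$ on $[a,b]$, the linear term integrates to zero by moment conservation and the quadratic remainder is controlled by $Q(t) \to 0$, yielding $\int h\,g(t,\cdot) \to m\,h(c)$ as claimed. The delicate points to handle carefully are the justification of differentiating under the integral (legitimate because the support is confined to $[a,b]$ and $g$ is a smooth solution there) and making the LaSalle argument rigorous for measure-valued limits rather than assuming a priori that the limit is a single peak.
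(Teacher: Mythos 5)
Your overall architecture matches the paper's: conservation of mass and first moment pins down the candidate limit, the support is confined to $[a,b]$ by a boundary-flux sign argument using $W(x)>0$ for $x>0$, the key quantity is the second moment about the barycenter, and the final step tests against $h\in\C^2$ via the expansion $h(x)=h(c)+h'(c)(x-c)+O((x-c)^2)$. Your symmetrization
\[ \frac{d}{dt}\int (x-c)^2 g(t,x)\,dx = -\iint (x-y)W(x-y)\,g(t,x)g(t,y)\,dx\,dy \]
is exactly the computation in the paper.

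The one place where you diverge is the passage from $\dot Q\le 0$ to $Q\to 0$, and this is where your route has a genuine gap. You only use $xW(x)\ge 0$ and then appeal to a LaSalle-type invariance principle for measure-valued limits. As sketched this does not close: monotone decrease of the bounded quantity $Q$ gives $Q\to Q_\infty$ but \emph{not} $\dot Q\to 0$ (you would need some uniform continuity of $t\mapsto\dot Q(t)$), and the invariance of the $\omega$-limit set under the flow presupposes well-posedness and continuity of the evolution on the space of measures supported in $[a,b]$, which is not available here. The paper sidesteps all of this by using the hypothesis $W'(0)>0$ \emph{quantitatively}: since $W$ is $\C^1$ with $W'(0)>0$ and $W>0$ on $\left]0,b-a\right]$, there is $\mu>0$ with $xW(x)\ge\mu x^2$ for $|x|\le b-a$, whence
\[ \frac{dQ}{dt} \le -\mu\iint (x-y)^2 g(t,x)g(t,y)\,dx\,dy = -2\mu m\,Q(t) , \]
using conservation of the first moment in the last equality (which you already wrote down), and Gr\"onwall gives exponential decay $Q(t)\le e^{-2\mu m t}Q(0)$. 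This is both simpler and stronger than the compactness argument, and it is the step for which the hypothesis $W'(0)>0$ is actually needed; in your version that hypothesis is never used. I recommend replacing the LaSalle paragraph by this differential inequality; the rest of your proof then goes through as written.
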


\begin{proof}
  Without loss of generality, $m = 1$ and $c = 0$.
  We first prove the statement on the support of~$g(t,\cdot)$.
  In a similar way as above in the proof of Lemma~\ref{L:suppbd}, we see that
  \[
     \frac{d}{dt} \int_b^\infty g(t,x)\,dx
       = \int_b^\infty
           \partial_x \bigl((W*g(t,\cdot)) \cdot g(t,\cdot)\bigr)(x)\,dx \\
       = -(W*f(t,\cdot))(b) g(t,b)
  \]
  So if $g(t,x) > 0$ for some $t > 0$ and $x > b$, we must have
  $g(\tau, b) > 0$ and $(W*g(\tau,\cdot))(b) < 0$ for some $0 < \tau < t$.
  Let $t_0$ be the infimum of $\tau > 0$ such that $g(\tau, b) > 0$.
  Then $g(t_0,x) = 0$ for $x \ge b$, and it follows that $(W*g(t_0,\cdot))(b) > 0$.
  By continuity, we will have $(W*g(\tau,\cdot))(b) > 0$ and $g(\tau, b) > 0$
  for all sufficiently small $\tau > t_0$, so that the derivative above cannot
  be positive. So $g(t,x) > 0$ for some $t > 0$ and $x > b$ is not possible.
  This shows that $\supp g(t,\cdot) \subset \left]-\infty,b\right]$ for all~$t > 0$.
  The argument for the lower bound is similar.
  
  We now consider the second moment of~$g(t, \cdot)$. Let
  $M(t) = \int_{-\infty}^\infty x^2 g(t,x)\,dx$.
  There is $\mu > 0$ such that $xW(x) \ge \mu x^2$ for all $|x| \le b-a$. Then
  \begin{align*}
    \frac{d}{dt} M(t)
      &= \int_{-\infty}^\infty
           x^2 \partial_x \bigl((W*g(t,\cdot)) \cdot g(t, \cdot)\bigr)(x)\,dx \\
      &= -2 \int_{-\infty}^\infty x\, (W*g(t,\cdot))(x) g(t,x)\,dx \\
      &= -2 \int_{-\infty}^\infty \int_{-\infty}^\infty
              x W(x-y) g(t,y) g(t,x) \,dy\,dx \\
      &= -\int_{-\infty}^\infty \int_{-\infty}^\infty
              (x-y) W(x-y) g(t,y) g(t,x) \,dy\,dx \\
      &\le -\mu \int_a^b \int_a^b (x-y)^2 g(t,y) g(t,x) \,dy\,dx \\
      &= -2 \mu \int_a^b x^2 g(t,x)\,dx = -2 \mu M(t) \,.
  \end{align*}
  This implies that $M(t) \le e^{-2\mu} M(0)$; in particular, $M(t) \to 0$
  as $t \to \infty$.

  Now let $h \in \C^2(\R)$. We can write
  $h(x) = h(0) + h'(0) x + h_2(x)$ where $h_2(0) = h_2'(0) = 0$. This implies
  that there is some $C > 0$ such that $|h_2(x)| \le C x^2$ for $x \in [a,b]$.
  We then have
  \begin{align*}
    \Bigl|\int_{-\infty}^\infty h(x) g(t,x)\,dx - h(0)\Bigr|
      &= \Bigl|\int_{-\infty}^\infty (h(0) + h'(0) x + h_2(x)) g(t,x)\,dx - h(0)\Bigr|
         \\
      &= \Bigl|\int_a^b h_2(x) g(t,x)\,dx\Bigr|
      \le \int_a^b |h_2(x)| g(t,x)\,dx
      \le C M(t) \,,
  \end{align*}
  and this tends to zero as $t \to \infty$.
\end{proof}


\subsection{Local masses and barycenters} \label{S:bary} \strut

For equation~\eqref{tp}, the mass $\int_{S^1} f(t,\theta)\,d\theta$ is still
an invariant, but there is no reasonable definition of a `first moment'.
(For this, one would need a function $F : S^1 \to \R$ that satisfies
$F(\theta + a) = F(\theta) + a$ for all $\theta \in S^1$ and $a \in \R$.
Such a function obviously does not exist.) However, we can define a localized
version of a first moment.

\begin{definition} \label{D:moment}
  Let $f : S^1 \to \R$ be continuous and nonnegative,
  and let $I \subset S^1$ be a closed interval.
  Let $I' = [a,b] \subset \R$ be an interval such that $p(I') = I$.
  We define the \emph{local mass}
  \[ m(I, f) = \int_I f(\theta)\,d\theta = \int_a^b p^*(f)(x)\,dx \in \R \]
  and, if $m(I,f) > 0$, the \emph{local barycenter}
  \[ M(I, f) = p\Bigl(\frac{1}{m(I, f)} \int_a^b x p^*(f)(x)\,dx\Bigr) \in I \,. \]
\end{definition}

The local barycenter does not depend on the choice of~$I'$ --- any other choice
has the form $I' + k$ with $k \in \Z$, and then we find that
\[ \int_{a+k}^{b+k} x p^*(f)(x)\,dx
     = \int_a^b (x+k) p^*(f)(x)\,dx
     = \int_a^b x p^*(f)(x)\,dx + k m(I, f) \,,
\]
so that the expression under $p(\cdot)$ changes by an integer, and the result
is unchanged.

\begin{lemma} \label{L:local}
  Let $f \ge 0$ be a solution of equation~\eqref{tp}, and let $I \subset S^1$ be a
  closed interval. Then the local mass $m(I, f(t,\cdot))$ is time-invariant,
  provided there is no flow across the
  boundary of~$I$: if $\alpha, \beta \in S^1$ are the endpoints of~$I$, then
  we require
  \[ (V*f(t,\cdot))(\alpha) f(t,\alpha) = (V*f(t,\cdot))(\beta) f(t,\beta) = 0 \]
  for all $t > 0$.

  If in addition, there is no interaction with parts of~$f$ outside of~$I$,
  meaning that
  \[ V(\theta-\psi) f(t,\theta) f(t,\psi) = 0
       \qquad\text{if $\theta \in I$ and $\psi \notin I$,}
  \]
  then the local barycenter $M(I, f(t,\cdot))$ is also time-invariant.
\end{lemma}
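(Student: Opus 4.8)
The plan is to verify the two invariance claims by differentiating the local mass and the local barycenter with respect to time and showing that the resulting boundary and interaction terms vanish under the stated hypotheses. For both computations the key is to replace $\partial_t f$ using equation~\eqref{tp} and then integrate by parts over the interval $I' = [a,b]$, taking advantage of the fact that the right hand side of~\eqref{tp} is a total $\theta$-derivative of the flux $J(t,\theta) = D\,\partial_\theta f(t,\theta) + (V*f(t,\cdot))(\theta)\,f(t,\theta)$.

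First I would treat the local mass. Differentiating under the integral and substituting~\eqref{tp} gives
\[
  \frac{d}{dt} m(I, f(t,\cdot))
    = \int_a^b \partial_\theta J(t,\theta)\,d\theta
    = J(t,\beta) - J(t,\alpha).
\]
The no-flow hypothesis is stated for $D = 0$ as $(V*f)(\alpha)f(\alpha) = (V*f)(\beta)f(\beta) = 0$; I would note that in the presence of diffusion one must additionally assume the diffusive flux $D\,\partial_\theta f$ vanishes at the endpoints (in practice the relevant application is $D = 0$, where $J = (V*f)f$ and the hypothesis exactly says $J(\alpha) = J(\beta) = 0$). Hence the boundary difference vanishes and the local mass is constant.

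Next I would treat the local barycenter. Since $m(I,f)$ is already shown to be constant, it suffices to prove that the unnormalized moment $\mu(t) = \int_a^b x\,p^*(f)(t,x)\,dx$ is constant. Differentiating and substituting the flux form of~\eqref{tp}, then integrating by parts once, yields
\[
  \frac{d}{dt}\mu(t)
    = \int_a^b x\,\partial_\theta J(t,\theta)\,d\theta
    = \bigl[x\,J\bigr]_a^b - \int_a^b J(t,\theta)\,d\theta.
\]
The boundary term $[xJ]_a^b$ vanishes by the same no-flow condition used above, and the diffusive part of $\int_a^b J$ contributes $D(f(\beta)-f(\alpha))$, which again vanishes in the relevant $D=0$ setting (or under the additional boundary assumption on $\partial_\theta f$). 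The remaining term is $-\int_a^b (V*f)(t,\theta) f(t,\theta)\,d\theta$, and this is where the second hypothesis enters: I would expand the convolution as a double integral over $\theta \in I$ and $\psi \in S^1$, split the $\psi$-integral into the part over $I$ and the part over $S^1 \setminus I$, and argue that the $I \times I$ piece vanishes by the antisymmetry argument of Lemma~\ref{L:symmint} (the same swap $\theta \leftrightarrow \psi$ combined with oddness of $V$), while the $I \times (S^1\setminus I)$ piece vanishes termwise because the integrand $V(\theta-\psi)f(\theta)f(\psi)$ is identically zero by the no-interaction hypothesis.

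The main obstacle I anticipate is the $I \times I$ term: Lemma~\ref{L:symmint} is stated for integration over all of $S^1$, where the clean $\theta \leftrightarrow \psi$ symmetrization applies, whereas here both variables are restricted to the subinterval $I$. I would need to check that the symmetrization still goes through on $I \times I$ — it does, because the domain $I \times I$ is itself symmetric under the swap and $V(\theta-\psi) = -V(\psi-\theta)$ holds pointwise (here the difference $\theta - \psi$ is the well-defined real-valued difference from Definition~\ref{D:interval}, since $\theta,\psi$ lie in a common interval), so the integral equals its own negative. A secondary subtlety worth flagging is that the two hypotheses together force any interaction to be "self-contained" within $I$, so the localized barycenter behaves exactly like the genuine first moment of a solution on $\R$ supported in $I'$, consistent with Proposition~\ref{P:tpRbasic}(4).
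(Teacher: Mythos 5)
Your proof is correct and follows essentially the same route as the paper's: integrate by parts to reduce the mass derivative to a boundary flux term and the moment derivative to a boundary term plus $-\int_I (V*f(t,\cdot))\,f(t,\cdot)$, then kill the latter by splitting the convolution into the $I\times I$ part (antisymmetry as in Lemma~\ref{L:symmint}) and the $I\times(S^1\setminus I)$ part (no-interaction hypothesis). Your remark that the stated no-flow condition does not control the diffusive flux $D\,\partial_\theta f$ at the endpoints is a fair catch --- the paper's own proof silently drops the diffusion term, and the lemma is indeed only applied in the $D=0$ setting.
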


\begin{proof}
  We have
  \begin{align*}
    \frac{d}{dt} m(I, f(t,\cdot))
      &= \int_I \partial_\theta\bigl((V*f(t,\cdot)) f(t,\cdot)\bigr)(\theta)\,d\theta
         \\
      &= (V*f(t,\cdot))(\beta) f(t,\beta) - (V*f(t,\cdot))(\alpha) f(t,\alpha) = 0
  \end{align*}
  and, writing $f$ for~$\tilde{f}$ and $m(I, f) = m(I, f(t, \cdot))$ for simplicity,
  \begin{align*}
    m(I, f) &\frac{d}{dt} M(I, f(t,\cdot)) \\
      &= \int_a^b x \partial_x\bigl((V*f(t,\cdot)) f(t,\cdot)\bigr)(x)\,dx \\
      &= b (V*f(t,\cdot))(b) f(t,b) - a (V*f(t,\cdot))(a) f(t,a)
          - \int_a^b (V*f(t,\cdot))(x) f(t,x)\,dx \\
      &= -\int_I \int_{S^1} V(\theta-\psi) f(t,\psi) f(t,\theta)\,d\psi\,d\theta \\
      &= -\int_I \int_I V(\theta-\psi) f(t,\psi) f(t,\theta)\,d\psi\,d\theta \\
      &= 0 \,,
  \end{align*}
  because $V$ is odd, compare Lemma~\ref{L:symmint}.
\end{proof}


\subsection{Invariance of support} \strut

We consider equation~\eqref{tp} without diffusion on the circle
(but what we say here is also valid for the equation on the real line).

\begin{lemma} \label{L:invsupp}
  Let $A = I_1 \cup \ldots \cup I_n \subset S^1$ be
  a disjoint union of closed intervals in~$S^1$; write $I_j = [\alpha_j,\beta_j]$.
  Assume that for every continuous function $h : S^1 \to \R_+$, we have the
  implication
  \[ h|_A = 0 \;\Longrightarrow\;
     (V*h)(\alpha_j) > 0 \quad\text{and}\quad (V*h)(\beta_j) < 0
     \quad\text{for all $1 \le j \le n$.}
  \]
  Let $f : \left[0,\infty\right[ \times S^1 \to \R_+$ be a solution of
  equation~\eqref{tp} with $D = 0$ such that $f(0,\cdot)|_A = 0$.
  Then $f(t,\cdot)|_A = 0$ for all $t \ge 0$.

  Equivalently, if $\supp f(0,\cdot) \subset \overline{S^1 \setminus A}$,
  then $\supp f(t,\cdot) \subset \overline{S^1 \setminus A}$ for all~$t \ge 0$.
\end{lemma}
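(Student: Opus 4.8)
The plan is to monitor the local masses $m_j(t) = \int_{I_j} f(t,\cdot)\,d\theta$ carried by the solution inside each forbidden interval $I_j = [\alpha_j,\beta_j]$, and to show that each one stays equal to its initial value $0$. Since $f \ge 0$ is continuous, the condition $f(t,\cdot)|_A = 0$ is equivalent to $m_A(t) := \sum_j m_j(t) = 0$, so it suffices to prove $m_j(t) = 0$ for all $t \ge 0$ and all $j$. Exactly as in the first computation in the proof of Lemma~\ref{L:local} (now with $D = 0$ in equation~\eqref{tp}), the fundamental theorem of calculus gives
\[ \frac{d}{dt} m_j(t) = \int_{\alpha_j}^{\beta_j} \partial_\theta\bigl((V*f(t,\cdot))\, f(t,\cdot)\bigr)(\theta)\,d\theta = (V*f)(\beta_j)\, f(\beta_j) - (V*f)(\alpha_j)\, f(\alpha_j), \]
where everything is evaluated at time $t$. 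This reduces the problem to controlling the signs of the two boundary flux terms.

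The sign control is precisely what the hypothesis provides. If at some time $t$ we have $f(t,\cdot)|_A = 0$, then $h = f(t,\cdot)$ is a continuous non-negative function vanishing on $A$, so by assumption $(V*f(t,\cdot))(\alpha_j) > 0$ and $(V*f(t,\cdot))(\beta_j) < 0$ for every $j$. Because $f \ge 0$, the first term in the displayed derivative is then $\le 0$ and the second term $-(V*f)(\alpha_j) f(\alpha_j)$ is also $\le 0$, whence $\frac{d}{dt} m_j(t) \le 0$. Intuitively this says that at the boundary of $A$ the turning velocity $-(V*f)$ points \emph{out} of $A$, so no mass can flow in.

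The main obstacle is that this sign information is only handed to us by the hypothesis \emph{while} $f$ still vanishes on all of $A$ --- that is, exactly under the conclusion we are trying to prove --- so we cannot legitimately invoke the hypothesis at a time when $f$ has (hypothetically) already leaked into $A$. I would circumvent this with a first-time/continuity argument in the spirit of the proof of Proposition~\ref{P:peak-on-R}. Suppose, for contradiction, that $m_A(t) > 0$ for some $t > 0$, and let $t_0$ be the infimum of such times. Since $m_A$ is continuous in $t$ with $m_A(0) = 0$, we get $m_A(t_0) = 0$, hence $f(t_0,\cdot)|_A = 0$, so the hypothesis applies \emph{at $t_0$} and yields the strict inequalities there. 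Crucially, the maps $t \mapsto (V*f(t,\cdot))(\alpha_j)$ and $t \mapsto (V*f(t,\cdot))(\beta_j)$ are continuous --- this uses only continuity of $t \mapsto f(t,\cdot)$ together with boundedness of $V$ --- so the strict sign conditions persist on some interval $[t_0, t_0+\delta]$ purely by continuity, with no further appeal to the hypothesis.

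On that interval the estimate of the second paragraph gives $\frac{d}{dt} m_j \le 0$; since $m_j(t_0) = 0$ and $m_j \ge 0$, each $m_j$, and therefore $m_A$, vanishes identically on $[t_0, t_0+\delta]$. This contradicts $t_0$ being the infimum of times with $m_A > 0$. Hence $m_A \equiv 0$, which is the assertion. The equivalent support formulation follows immediately: for continuous $f$, the inclusion $\supp f(t,\cdot) \subset \overline{S^1 \setminus A}$ means exactly that $f$ vanishes on the open intervals $\left]\alpha_j,\beta_j\right[$, and continuity upgrades this to vanishing on the closed set $A$. The only points that need care in turning this sketch into a full proof are the routine regularity justifications (differentiability of $t \mapsto m_j(t)$ and the $t$-continuity of the convolutions), all of which are covered by the smoothness of solutions established by Primi et~al.~\cite{Primi}.
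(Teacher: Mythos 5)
Your proof is correct, but it takes a genuinely different route from the paper's. The paper argues along characteristics: it introduces the flow $\Phi$ of the velocity field $-(V*f(t,\cdot))$, uses that the mass between flowed endpoints $\Phi(t,\alpha)$ and $\Phi(t,\beta)$ is conserved, and then shows by a first-crossing argument on the moving endpoints $\alpha_j(t),\beta_j(t)$ that the flowed intervals $A(t)$ always contain $A$, so that $\int_A f(t,\cdot)\,d\theta \le \int_{A(t)} f(t,\cdot)\,d\theta = 0$. You instead keep the intervals $I_j$ fixed and compute $\frac{d}{dt}m_j(t)$ as a boundary flux (exactly the computation in Lemma~\ref{L:local}), then close the argument with the same infimum-of-bad-times device that the paper itself uses in Proposition~\ref{P:peak-on-R}. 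Your version is more elementary: it needs no flow, no verification of the conservation of mass along characteristics (which the paper leaves as ``readily checked''), and it handles cleanly, via the persistence of the strict sign conditions on a whole interval $[t_0,t_0+\delta]$, the step that the paper treats somewhat loosely (the assertion that at the first crossing time some endpoint must have a derivative of the wrong sign really requires a small limiting argument). What the paper's flow picture buys in exchange is sharper information: it shows not just that no mass enters $A$, but that the support is confined to the complement of the \emph{expanding} region $A(t) \supset A$. Both proofs rest on the same mechanism — whenever $f$ vanishes on $A$, the hypothesis forces the velocity $-(V*f)$ to point out of $A$ at every boundary point — and your identification of the key difficulty (that the hypothesis is only available while the conclusion still holds, so a first-time/continuity argument is needed) is exactly right.
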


\begin{proof}
  For the given solution~$f$, let $\Phi : \left[0,\infty\right[ \times S^1 \to S^1$
  be the flow associated to $-(V*f)$,
  \[ \Phi(0,\theta) = \theta \quad\text{and}\quad
     \frac{\partial\Phi}{\partial t}(t,\theta)
       = -(V * f(t,\cdot))(\Phi(t,\theta)) \,.
  \]
  Then it is readily checked that for all $\alpha, \beta \in S^1$, the integral
  \[ M(\alpha,\beta) = \int_{\Phi(t,\alpha)}^{\Phi(t,\beta)} f(t,\theta)\,d\theta \]
  is independent of~$t$.
%
  Write $\alpha_j(t) = \Phi(t,\alpha_j)$, $\beta_j(t) = \Phi(t,\beta_j)$ and
  \[ A(t) = \bigcup_{j=1}^n [\alpha_j(t), \beta_j(t)] \,, \]
  then we have
  \[ \int_{A(t)} f(t,\theta)\,d\theta = 0 \]
  for all $t \ge 0$. Now assume that $f(t,\cdot)|_A$ is not identically zero
  for some $t > 0$. Then we must have that $A \not\subset A(t)$.
  Let $t_0$ be the infimum of all $t > 0$ such that $A \not\subset A(t)$.
  Then for some $1 \le j \le n$, we must have $\alpha_j(t_0) = \alpha_j$
  and $\frac{d\alpha_j}{dt}(t_0) \ge 0$, or $\beta_j(t_0) = \beta_j$ and
  $\frac{d\beta_j}{dt}(t_0) \le 0$. But in the first case
  \[ \frac{d\alpha_j}{dt}(t_0)
       = \frac{\partial\Phi}{\partial t}(t_0, \alpha_j)
       = -(V*f(t_0,\cdot))(\Phi(t_0,\alpha_j))
       = -(V*f(t_0,\cdot))(\alpha_j)
       < 0 \,,
  \]
  since $f(t_0,\cdot)|_A = 0$, and similarly in the second case
  $\frac{d\beta_j}{dt}(t_0) > 0$, leading to a contradiction.
\end{proof}


\section{Stability of the constant solution}

\begin{theorem}[Local stability of the constant solution] \label{thm:stability}
  The constant function \( f(\theta) = 1 \) is a stationary solution
  of equation~\eqref{tp}. The eigenvalues of the linearization around~$f$ are
  \[ c_k = 4 \pi k (-\pi D k + v_k) \quad \text{for \( k \in \Z \),} \]
  where \( v_k = \int_0^{\half} V(\theta) \sin(2\pi k \theta) \,d\theta \).
  Hence, the constant stationary solution is locally stable if \( c_k < 0 \)
  for all \( k > 0 \).

  More precisely, assume that $\sum_{l>0} l |v_l| < \infty$
  (this is for example the case when $V$ is~$\C^1$ and piecewise~$\C^2$); then
  \( \rho = \sqrt{\sup_{l > 0} \frac{l^2 (l^2-1)}{6} v_l^2} < \infty \).
  Define
  \[ \|f\|^2 = \sum_{k \ge 1} \frac{1}{k} |f_k|^2 . \]
  Any initial function $f(0,\cdot)$ such that
  \[ \|f(0,\cdot)\| < \frac{-\max_{k \ge 1} k (v_k - k \pi D)}{\rho}
                    = \frac{\min_{k \ge 1} (-c_k)}{4 \pi \rho} \]
  converges to the constant~$1$ in the sense that
  \[ \|f(t,\cdot) - 1\|_\infty \to 0 \quad\text{and}\quad
     \|\partial^n_\theta f(t,\cdot)\|_\infty \to 0 \quad\text{for all $n \ge 1$}
  \]
  as $t \to \infty$.
\end{theorem}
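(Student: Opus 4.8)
The plan is to dispatch the elementary assertions first and then concentrate on the convergence statement, which is the real content. That $f\equiv 1$ is stationary is immediate: its $\theta$-derivatives vanish and $(V*1)\equiv\int_{S^1}V = 0$ because $V$ is odd, so the right-hand side of~\eqref{tp} vanishes. Linearising~\eqref{fou_k} about the constant (keeping $f_0=1$ fixed and dropping the quadratic terms) decouples the modes into $\dot f_k = c_k f_k$, so the $c_k$ are the eigenvalues and local stability holds once $c_k<0$ for all $k>0$. For the quantitative statement I would work entirely in Fourier space, regarding $(f_k)_{k\ge1}$ as the perturbation, and track the energy $E(t)=\|f(t,\cdot)\|^2=\sum_{k\ge1}\frac1k|f_k|^2$ (note that the norm ignores $f_0$, so $\|f\|=\|f-1\|$). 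Using~\eqref{fou_k} one finds $\dot E = 2\sum_{k\ge1}\frac{c_k}{k}|f_k|^2 + 8\pi\,\Re S$, where $S=\sum_{k\ge1}\bar f_k\sum_{l\neq0,k}v_lf_lf_{k-l}$; the weight $\frac1k$ exactly cancels the factor $k$ produced by the $\theta$-derivative, leaving an unweighted trilinear form.

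The linear part is controlled at once: with $\lambda=\min_{k\ge1}(-c_k)>0$ one has $c_k\le-\lambda$ for every $k\ge1$, hence $2\sum_{k\ge1}\frac{c_k}{k}|f_k|^2\le-2\lambda E$. Everything then hinges on showing $|\Re S|\le\rho\,E^{3/2}=\rho\|f\|^3$, and the key manoeuvre — the step I expect to be the crux — is to symmetrise $\Re S$ using $f_{-k}=\bar f_k$ and the oddness $v_{-l}=-v_l$. Summing the six orderings of each frequency triple $(a,b,c)$ with $a+b+c=0$ shows that the symmetrised coefficient equals $2v_M$, where $M$ is the \emph{largest} modulus in the triple; this recasts the form as
\[ \Re S = \sum_{p,q\ge1} v_{p+q}\,\Re\!\bigl(f_{p+q}\,\bar f_p\,\bar f_q\bigr). \]
The decisive feature is that $v$ now sits on the \emph{highest} frequency $p+q$, so its decay can absorb the high-frequency contributions; in the original form $v$ sat on an interior frequency and the weak norm $\|\cdot\|$ was powerless against the resulting autocorrelations.

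From here the estimate is two applications of Cauchy--Schwarz. First, for fixed $n=p+q$, writing $|f_p||f_q|=\frac{|f_p|}{\sqrt p}\frac{|f_q|}{\sqrt q}\sqrt{pq}$ and using the identity $\sum_{p=1}^{n-1}p(n-p)=\frac{n(n^2-1)}{6}$ gives $\sum_{p+q=n}|f_p||f_q|\le\sqrt{\tfrac{n(n^2-1)}{6}}\bigl(\sum_{p+q=n}\frac{|f_p|^2|f_q|^2}{pq}\bigr)^{1/2}$. Second, summing over $n$ after splitting off $|f_n|=\sqrt n\cdot\frac{|f_n|}{\sqrt n}$ and invoking $v_n^2\frac{n^2(n^2-1)}{6}\le\rho^2$ yields $|\Re S|\le\|f\|\cdot\rho\|f\|^2=\rho\|f\|^3$, which is precisely where the constant $\rho$ and the weight $\frac{l^2(l^2-1)}{6}$ enter. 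Combining the two parts gives $\dot E\le-2\lambda E+8\pi\rho E^{3/2}=2E\bigl(-\lambda+4\pi\rho\sqrt E\bigr)$. If $\|f(0,\cdot)\|<\lambda/(4\pi\rho)=\min_{k\ge1}(-c_k)/(4\pi\rho)$, the region $\{\sqrt E<\lambda/(4\pi\rho)\}$ is forward-invariant, $E$ decreases monotonically, and a standard comparison argument forces $E(t)\to0$ exponentially.

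Finally I would upgrade the decay of the weak norm to the stated uniform convergence. Since $E(t)\to0$ controls only an $H^{-1/2}$-type norm, I would interpolate against uniform-in-time bounds on higher Sobolev norms — available either from the a priori estimates of Primi et~al.~\cite{Primi} or, when $D>0$, from the parabolic smoothing implicit in $c_k\sim -k^2$ — so that $\|f(t,\cdot)-1\|_{H^s}\to0$ for every $s$; Sobolev embedding then gives $\|f(t,\cdot)-1\|_\infty\to0$ and $\|\partial_\theta^n f(t,\cdot)\|_\infty\to0$ for all $n\ge1$. The two places needing care are the combinatorial bookkeeping in the symmetrisation — in particular the diagonal triples $p=q$, which must be checked to reproduce the same coefficient — and securing the uniform higher-norm bounds used in this last interpolation step.
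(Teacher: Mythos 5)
Your core argument coincides with the paper's proof almost step for step: the same energy $E(t)=\sum_{k\ge1}\tfrac1k|f_k|^2$, the same symmetrisation using $f_{-k}=\bar f_k$ and $v_{-l}=-v_l$ to cancel the two cross sums and leave $\Re\sum_{p,q\ge1}v_{p+q}f_{p+q}\bar f_p\bar f_q$ with the coefficient on the highest frequency, the same double Cauchy--Schwarz with $\sum_{p=1}^{n-1}p(n-p)=\tfrac{n(n^2-1)}{6}$ producing $|\Re S|\le\rho\,E^{3/2}$, and the same differential inequality and forward-invariance argument giving exponential decay of $E$ under the stated smallness condition. All of that is correct and is exactly what the paper does (after a harmless time rescaling by $4\pi$).

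The genuine gap is your final step, and you half-suspect it yourself. Exponential decay of $E$ only yields $|f_k(t)|\le\sqrt{k}\,\|f(0,\cdot)\|e^{-ct}$, which is not even summable in~$k$, so some gain in $k$-decay is indispensable. Your proposal to interpolate against \emph{uniform-in-time} bounds on higher Sobolev norms is not available off the shelf: the a priori estimates of Primi et~al.\ guarantee global existence of smooth solutions but do not assert time-uniform control of all $H^s$ norms, and ``parabolic smoothing'' from $c_k\sim-Dk^2$ cannot be invoked without first controlling the quadratic (transport) term in those stronger norms --- which is essentially the point at issue. The paper closes this loop with a self-contained bootstrap: writing $\dot f_k-c'_kf_k=R_k$ and using Duhamel, the hypothesis $\sum_{l>0}l|v_l|<\infty$ bounds the quadratic remainder by $C k^{\alpha}e^{-ct}$ whenever $|f_k|\le Ck^{\alpha}e^{-ct}$, and division by $|c+c'_k|\gtrsim k^2$ upgrades the exponent from $\alpha$ to $\alpha-2$; iterating gives $|f_k(t)|\le C_Nk^{-N}e^{-ct}$ for every $N$, whence uniform convergence of $f$ and all its derivatives. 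If you want to salvage your interpolation route you would have to prove the time-uniform higher-norm bounds first, which amounts to running essentially this same bootstrap; so the Duhamel argument is the missing ingredient rather than an optional alternative. (Note also that the factor $|c+c'_k|\gtrsim k^2$ uses $D>0$; this is where the diffusion enters the regularity upgrade.)
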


\begin{remarks} \strut
  \vspace{-2ex}
  \begin{enumerate}
  \renewcommand{\theenumi}{\roman{enumi}}
  \renewcommand{\labelenumi}{\theenumi)}
    \item The diffusion term has a stabilizing effect on the constant stationary
          solution.
    \item Since by assumption, $l^2 v_l$ is bounded, $c_k$ will be negative
          for $k \gg 0$. Therefore, higher modes tend to be linearly stable.
    \item Because periodicity is preserved, instability of the \( k \)-th mode,
          i.e.,~\( c_k > 0 \), implies that something interesting happens in the
          subspace of \( \frac{1}{k} \)-periodic solutions. In all examples known
          to us there exist non-constant \( \frac{1}{k} \)-periodic stationary
          solutions.
	  However, we cannot exclude the possibility of {\em time-periodic}
	  \( \frac{1}{k} \)-periodic solutions or chaos. But see
	  Corollary~\ref{CorTimePeriodic} below, which shows that time-periodic
	  solutions are impossible when there is no diffusion.
    \item Since
	  \[ \|f\|^2 = \sum_{k \ge 1} \frac{1}{k} |f_k|^2
                     \le \frac{1}{2} \sum_{k \in \Z \setminus \{0\}} |f_k|^2
                     = \frac{1}{2} \|f - 1\|_2^2
                     \le \frac{1}{2} \|f - 1\|_{\infty}^2 , \]
	  the condition on \( f(0,\cdot) \) is satisfied when
	  \[ \|f(0,\cdot) - 1\|_\infty
	        < \frac{\min_{k \ge 1} (-c_k)}{2 \sqrt{2} \pi \rho} . \]
  \end{enumerate}
\end{remarks}

\begin{proof}
  Formulas \eqref{fou_k} and~\eqref{ev} show that the given \( c_k \) are the
  eigenvalues of the linearization (which is obtained by disregarding quadratic
  terms on the right hand side of the differential equation).

  Recall that we assume \( f_0 = 1 \).
  We scale time by a factor \( 4 \pi \) and set \( \delta = \pi D \) in the Fourier
  transformed system~\eqref{fou_k} to get
  \begin{equation} \label{rescaled}
    \dot{f}_k = k \Bigl((-\delta k + v_k) f_k
                 + \sum_{l \in \Z \setminus \{0,k\}} v_l f_l f_{k-l}\Bigr) .
  \end{equation}
  Because $f_{-k} = \bar{f}_k$ ($f$ is real) and $v_{-k} = -v_k \in \R$, we see that
  \begin{align*}
    \frac{1}{2}\,\frac{d}{dt} \sum_{k \ge 1} \frac{1}{k} |f_k|^2
      &= \sum_{k \ge 1} \frac{1}{k} \Re(\bar{f}_k \dot{f}_k) \\
      &= \sum_{k \ge 1} (v_k - k \delta) |f_k|^2
	  + \Re \Bigl( \sum_{k \ge 1} \sum_{l>k} v_l f_l \bar{f}_k \bar{f}_{l-k} \Bigr)
	\\
      & \qquad\quad {}
	  + \Re \Bigl( \sum_{k \ge 1} \sum_{1 \le l < k} v_l f_l \bar{f}_k f_{k-l}
			- \sum_{k \ge 1} \sum_{l \ge 1} v_l \bar{f}_l \bar{f}_k f_{k+l}
		\Bigr) \\
      &= \sum_{k \ge 1} (v_k - k \delta) |f_k|^2
	  + \Re \Bigl( \sum_{k,m \ge 1} v_{k+m} f_{k+m} \bar{f}_k \bar{f}_m \Bigr)
  \end{align*}
  Note that, setting $k \leftarrow k+l$, we have
  \[ \Re \Bigl( \sum_{k \ge 1} \sum_{1 \le l < k} v_l f_l \bar{f}_k f_{k-l} \Bigr)
      = \Re \Bigl( \sum_{k,l \ge 1} v_l f_l f_k \bar{f}_{k+l} \Bigr)
      = \Re \Bigl( \sum_{k,l \ge 1} v_l \bar{f}_l \bar{f}_k f_{k+l} \Bigr),
  \]
  justifying the last equality above. In the remaining sum, we have set
  $l \leftarrow k+m$. We can estimate the latter as follows.
  \begin{align}
  \Bigl|\Re \Bigl( & \sum_{k,m \ge 1} v_{k+m} f_{k+m} \bar{f}_k \bar{f}_m \Bigr)\Bigr|^2
    \nonumber \\
    &\le \Bigl| \sum_{k,m \ge 1} v_{k+m} f_{k+m} \bar{f}_k \bar{f}_m \Bigr|^2
      \nonumber \\
    &= \Bigl| \sum_{k,m \ge 1} \sqrt{km(k+m)} v_{k+m}
                    \frac{f_{k+m}}{\sqrt{k+m}}\,\frac{\bar{f}_k}{\sqrt{k}}\,
                    \frac{\bar{f}_m}{\sqrt{m}} \Bigr|^2 \nonumber \\
    &\le \Bigl( \sum_{k \ge 1} \frac{1}{k} |f_k|^2 \Bigr)
         \Bigl( \sum_{k \ge 1}
                 \Bigl| \sum_{m \ge 1}
                           \sqrt{km(k+m)} v_{k+m}
                              \frac{f_{k+m}}{\sqrt{k+m}}\,\frac{\bar{f}_m}{\sqrt{m}}
                 \Bigr|^2 \Bigr) \nonumber \\
    &\le \Bigl( \sum_{k \ge 1} \frac{1}{k} |f_k|^2 \Bigr)
         \Bigl( \sum_{k \ge 1}
                 \Bigl( \sum_{m \ge 1} \frac{1}{m} |f_m|^2 \Bigr)
                 \Bigl( \sum_{m \ge 1} km(k+m) v_{k+m}^2 \frac{1}{k+m} |f_{k+m}|^2 
                 \Bigr)
         \Bigr) \nonumber \\
    &= \Bigl( \sum_{k \ge 1} \frac{1}{k} |f_k|^2 \Bigr)^2
       \Bigl( \sum_{l \ge 1} \Bigl( \sum_{k = 0}^l k(l-k) \Bigr) l v_l^2
              \, \frac{1}{l} |f_l|^2 \Bigr) \nonumber \\
    &\le \Bigl( \sum_{k \ge 1} \frac{1}{k} |f_k|^2 \Bigr)^2
         \Bigl( \sum_{l \ge 1} \frac{l^2 (l^2-1)}{6} v_l^2
              \, \frac{1}{l} |f_l|^2 \Bigr) \label{E:bound} \\
    &\le  \sup_{l > 0} \left( \frac{l^2 (l^2-1)}{6} v_l^2 \right) 
           \, \Bigl( \sum_{k \ge 1} \frac{1}{k} |f_k|^2 \Bigr)^3
      = \rho^2 \Bigl( \sum_{k \ge 1} \frac{1}{k} |f_k|^2 \Bigr)^3 . \nonumber
  \end{align}
  In terms of \( \|f\|^2 = \sum_{k \ge 1} \frac{1}{k} |f_k|^2 \), this means
  \begin{equation} \label{diffineq1}
    \frac{1}{2}\,\frac{d}{dt} \|f\|^2
      \le \bigl(\max_{k \ge 1} k(v_k - k \delta) + \rho \|f\|\bigr) \|f\|^2 .
  \end{equation}
  Let $f(0,\cdot)$ satisfy the given condition and set
  \[ c = -\bigl(\max_{k \ge 1} k(v_k - k \delta) + \rho \|f(0,\cdot)\|\bigr) > 0 . \]
  It follows that
  \[ \|f(t)\|^2 \le \|f(0,\cdot)\|^2 e^{-2 c t} \]
  for $t \ge 0$. Since $\frac{1}{k} |f_k|^2 \le \|f\|^2$, this implies that
  \[ 
    |f_k(t)| \le \sqrt{k} \|f(0,\cdot)\| e^{-c t}
       \quad\text{for $k \ge 1$ and $t \ge 0$.} 
  \] 

  We need a lemma.
  \begin{lemma} \label{LemmaThm1}
    Assume that $|f_k(t)| \le C k^\alpha e^{-c t}$ for all $k \ge 1$ and all $t \ge 0$,
    where $C > 0$ and $\alpha \le \half$ are constants. Then for any $t_0 > 0$,
    there is a constant $C' > 0$ (depending on~$t_0$) such that
    $|f_k(t)| \le C' k^{\alpha-2} e^{-c t}$ for all $k \ge 1$ and all $t \ge t_0$.
  \end{lemma}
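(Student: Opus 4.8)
The plan is to treat \eqref{rescaled} as a linear system perturbed by its quadratic part and to run a variation-of-constants (Duhamel) argument mode by mode. Writing $a_k = k(v_k - \delta k)$ for the $k$-th eigenvalue and $N_k(s) = k\sum_{l \in \Z\setminus\{0,k\}} v_l f_l(s) f_{k-l}(s)$ for the quadratic term, \eqref{rescaled} reads $\dot f_k = a_k f_k + N_k$, so
\[ f_k(t) = e^{a_k t} f_k(0) + \int_0^t e^{a_k (t-s)} N_k(s)\,ds . \]
I would bound the two summands separately, using the hypothesis $|f_k(s)| \le C k^\alpha e^{-cs}$ (and $|f_{-k}| = |f_k|$) throughout. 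The key quantitative input is that, since $\delta = \pi D > 0$, the eigenvalues satisfy $a_k = k v_k - \delta k^2 \le C_0 - \delta k^2$ with $C_0 = \sup_k k|v_k| < \infty$ (finite because $\rho < \infty$ forces $|v_k| = O(k^{-2})$), so $|a_k|$ grows quadratically in $k$; this parabolic smoothing is what drives the gain.

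For the linear term this is immediate once $t \ge t_0 > 0$: writing $e^{a_k t} = e^{a_k t_0}\,e^{a_k(t-t_0)}$, the first factor satisfies $e^{a_k t_0} \le e^{C_0 t_0} e^{-\delta k^2 t_0}$, which decays faster than any power of $k$, while $e^{a_k(t-t_0)} \le e^{-c(t-t_0)} = e^{c t_0} e^{-ct}$ because every $a_k \le -c$. Hence $e^{a_k t}|f_k(0)| \le C\,e^{(C_0+c)t_0}\,k^\alpha e^{-\delta k^2 t_0}\,e^{-ct}$, which is bounded by $C' k^{\alpha-2} e^{-ct}$ (indeed by any negative power of $k$) with a constant depending on $t_0$. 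This is exactly where the restriction $t \ge t_0$ and the assumption $D > 0$ are used.

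The quadratic term is the main obstacle. First I would estimate the convolution by $\bigl|\sum_l v_l f_l f_{k-l}\bigr| \le C^2 e^{-2cs}\,\sigma_k$ with $\sigma_k = \sum_{l\ne 0,k} |v_l|\,|l|^\alpha |k-l|^\alpha$, and control $\sigma_k$ using subadditivity $|k-l|^\alpha \le k^\alpha + |l|^\alpha$ (valid for $0 \le \alpha \le 1$):
\[ \sigma_k \le k^\alpha \sum_l |v_l|\,|l|^\alpha + \sum_l |v_l|\,|l|^{2\alpha} . \]
Both series converge: the first because $|v_l| = O(l^{-2})$, and the second --- at the borderline $\alpha = \half$, where it becomes $\sum_l |v_l|\,|l|$ --- precisely because of the hypothesis $\sum_{l>0} l|v_l| < \infty$. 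Thus $\sigma_k = O(k^\alpha)$ and $|N_k(s)| = O(k^{1+\alpha})e^{-2cs}$. Feeding this into the Duhamel integral and using $\int_0^t e^{a_k(t-s)} e^{-2cs}\,ds \le |a_k|^{-1} e^{-2ct}$ together with $|a_k|^{-1} = O(k^{-2})$ for large $k$ (and $|a_k|^{-1}\le c^{-1}$ for the finitely many small $k$) leaves the quadratic contribution to $f_k(t)$. The delicate point --- and the step I expect to require the most care --- is squeezing the claimed two powers of $k$ out of this term: the integral smoothing supplies a factor $|a_k|^{-1}=O(k^{-2})$, but it must be weighed against the prefactor $k$ and the $O(k^\alpha)$ size of the convolution, so the interplay between the decay of the $v_l$ and the smoothing has to be bookkept carefully. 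Combining the two estimates and absorbing all $t_0$-dependent factors into $C'$ then yields the asserted bound $|f_k(t)| \le C' k^{\alpha-2} e^{-ct}$ for $k \ge 1$ and $t \ge t_0$; for the use made of the lemma afterwards (iterating it to force super-polynomial decay of all $f_k$) only the fact that each application strictly improves the exponent is essential.
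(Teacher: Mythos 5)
Your overall strategy is the paper's: Duhamel's formula mode by mode, a convolution estimate for the quadratic term, and the parabolic gain $|a_k|^{-1}=O(k^{-2})$ coming from $\delta=\pi D>0$; your treatment of the linear term (splitting off $e^{a_k t_0}$, which decays like $e^{-\delta k^2 t_0}$, so that $t\ge t_0>0$ is needed) is exactly the paper's. But there is one genuine gap: your bound on $\sigma_k$ rests on the subadditivity $|k-l|^\alpha\le k^\alpha+|l|^\alpha$, which as you note is only valid for $0\le\alpha\le 1$. The lemma, however, is stated for all $\alpha\le\thalf$ and is applied repeatedly with $\alpha=\thalf,-\tfrac{3}{2},-\tfrac{7}{2},\dots$; for negative $\alpha$ your inequality is false (take $\alpha=-1$, $k=3$, $l=2$: the left side is $1$, the right side is $\tfrac{5}{6}$), so your proof of $\sigma_k=O(k^\alpha)$ covers only the first application of the lemma and none of the subsequent ones, which are its entire purpose. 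The paper instead uses the single estimate $l^\alpha|l\pm k|^\alpha\le C_1 k^\alpha l$, valid for every $\alpha\le\thalf$ (for $\alpha<0$ check the cases $|l|\le k/2$ and $|l|>k/2$ separately), whence $\sigma_k\le C_1 k^\alpha\sum_l l|v_l|$ and the standing hypothesis $\sum_l l|v_l|<\infty$ finishes it. This is easily repaired, but as written your convolution estimate does not prove the lemma in the generality in which it is used.

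On the point you flag as delicate --- extracting two powers of $k$ from the quadratic term --- your bookkeeping is in fact the correct one: the prefactor $k$ in \eqref{rescaled}, the $O(k^\alpha)$ size of the convolution and the $O(k^{-2})$ smoothing combine to $O(k^{\alpha-1})$, not $O(k^{\alpha-2})$, and no amount of care recovers the extra power (the $l=1$ term alone already contributes a quantity comparable to $|v_1|k^\alpha$ to the convolution, so $\sigma_k$ is genuinely of order $k^\alpha$). The paper's proof reaches $k^{\alpha-2}$ only because the factor $k$ multiplying the quadratic sum in \eqref{rescaled} is dropped when $R_k$ is written down (the identity $\dot f_k-c'_kf_k=R_k$ should read $\dot f_k-c'_kf_k=kR_k$ with the displayed $R_k$); restoring it, the paper's argument also yields $k^{\alpha-1}$. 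This weaker gain is harmless: each application still strictly improves the exponent, so the repeated application in the proof of Theorem~\ref{thm:stability} goes through verbatim. So do not try to squeeze out the second power --- prove the lemma with $\alpha-1$ in place of $\alpha-2$, and fix the convolution estimate for negative $\alpha$ as above.
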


  \begin{proof}[Proof of Lemma~\ref{LemmaThm1}]
    The quadratic part of the right hand side of the differential 
    equation~\eqref{rescaled} for~$f_k$ is
    \begin{align*}
      R_k &= \sum_{0 < l < k} v_l f_l f_{k-l}
	       + \sum_{l > k} v_l f_l \bar{f}_{l-k}
	       - \sum_{l > 0} v_l \bar{f}_l f_{k+l}
    \end{align*}
    We estimate $R_k$:
    \begin{align*}
      |R_k(t)|
        &\le C^2 e^{-2ct} \Bigl(\sum_{l > 0, l \neq k} |v_l|\,l^\alpha\,|l-k|^\alpha
                              + \sum_{l > 0} |v_l|\,l^\alpha\,|l+k|^\alpha\Bigr) \\
        &\le C^2 k^\alpha e^{-2ct} \sum_{l>0} C_1 l |v_l|
         \le C_2 k^\alpha e^{-2ct} \le C_2 k^\alpha e^{-ct} .
    \end{align*}
    Here we use that $\sum_{l > 0} l |v_l| < \infty$ and that
    $l^\alpha |l \pm k|^\alpha \le C_1 k^\alpha l$ for some constant~$C_1$
    only depending on~$\alpha$.
    Write $c'_k = k(v_k - k\delta) = c_k/(4\pi) < 0$. Then we have
    \[ \dot{f}_k - c'_k f_k = R_k \]
    and therefore
    \[ f_k(t) = e^{c'_k t} f_k(0) + \int_0^t e^{c'_k (t-\tau)} R_k(\tau)\,d\tau . \]
    The integral is bounded by
    \[ C_2 k^\alpha \Bigl|\frac{e^{-ct} - e^{c'_k t}}{c + c'_k}\Bigr|
        \le C_3 k^{\alpha-2} e^{-ct}
    \]
    for some constant~$C_3$ (note that $-c'_k \gg k^2$ and that
    $|c + c'_k| = |c'_k| - c > \rho \|f(0,\cdot)\|^2 > 0$).
    This gives
    \[ |f_k(t)| \le \left(k^{2-\alpha} e^{(c'_k+c)t} |f_k(0)| + C_3\right)
                       k^{\alpha-2} e^{-ct} .
    \]
    Since $c'_k + c \le -\operatorname{const.} k^2$, the first summand in
    brackets is bounded uniformly in~$k > 0$ for $t \ge t_0 > 0$.
    This finishes the proof of the lemma.
  \end{proof}

  Repeated application of the lemma then shows that, given $N > 0$ and
  $t_0 > 0$, there is a constant $C_N > 0$ such that
  \[ |f_k(t)| \le C_N k^{-N} e^{-ct}
       \quad \text{for all $k \ge 1$ and all $t \ge t_0$.}
  \]
  This implies
  \[ \|f(\cdot, t) - 1\|_\infty
	\le 2 \sum_{k \ge 1} |f_k(t)| = O\bigl(e^{-c t}\bigr) .
  \]
  Similarly, for any $n \ge 1$, we obtain
  \[ \|\partial_\theta^n f(\cdot, t)\|_\infty
       \le 2 (2\pi)^n \sum_{k \ge 1} k^n |f_k(t)|  = O\bigl(e^{-c t}\bigr) . \qedhere
  \]
\end{proof}

\begin{corollary}[Conditions for pattern formation] \label{cor_stab} \strut
  \renewcommand{\theenumi}{\emph{\alph{enumi}}}
  \renewcommand{\labelenumi}{\theenumi)}
  \begin{enumerate}
    \item Instability of the first mode: 
	  If \( V \) is positive on \( \left]0,\half\right[ \), 
          i.e.~all filaments attract
	  each other, then the first mode is unstable for sufficiently small
	  diffusion coefficient \( D \): \( c_1 > 0 \) for \( D \ll 1 \).
    \item Instability of the second mode: 
	  Assume that there exists \( \theta_0 \in \left]0,\half\right[ \) such that 
	  \( V(\theta) > 0 \) on \( \left]0,\theta_0\right[ \), \( V(\theta_0) = 0 \)
	  and
	  \( V(\theta) < 0 \) on \( \left]\theta_0,\half\right[ \).
	  Moreover let (*)
	  \( V(\theta) \ge V(\half-\theta) \) 
	  for \( \theta \in \left]\min(\theta_0,\half-\theta_0),\frac{1}{4}\right[ \).
	  Then the second mode is unstable for sufficiently small diffusion coefficient
	  \( D \), i.e.~\( c_2 > 0 \) for \( D \ll 1 \).
  \end{enumerate}
\end{corollary}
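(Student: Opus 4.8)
The plan is to reduce both claims to a strict sign condition on the Fourier coefficient $v_k$. By the eigenvalue formula $c_k = 4\pi k(-\pi D k + v_k)$ from Theorem~\ref{thm:stability}, for $k \ge 1$ we have $c_k > 0$ exactly when $v_k > \pi D k$. As the right-hand side tends to~$0$ when $D \to 0$, it therefore suffices to prove $v_1 > 0$ for part~a) and $v_2 > 0$ for part~b); the instability $c_k > 0$ then holds for every $D < v_k/(\pi k)$, which is what is meant by $D \ll 1$.

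For part~a), I would argue directly from $v_1 = \int_0^{\half} V(\theta)\sin(2\pi\theta)\,d\theta$. On $]0,\half[$ we have $\sin(2\pi\theta) > 0$ (since $2\pi\theta \in \,]0,\pi[$) and $V(\theta) > 0$ by hypothesis, so the integrand is strictly positive and $v_1 > 0$ follows at once.

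For part~b), the essential step is to fold the integral $v_2 = \int_0^{\half} V(\theta)\sin(4\pi\theta)\,d\theta$ about the point~$\tquarter$. Applying the substitution $\theta \mapsto \half-\theta$ to the part over $[\tquarter,\half]$ and using $\sin\!\bigl(4\pi(\half-\theta)\bigr) = -\sin(4\pi\theta)$ gives
\[ v_2 = \int_0^{\tquarter} \bigl(V(\theta) - V(\thalf-\theta)\bigr)\sin(4\pi\theta)\,d\theta . \]
Since $\sin(4\pi\theta) > 0$ on $]0,\tquarter[$, it now suffices to show that $V(\theta) - V(\half-\theta) \ge 0$ there, with strict inequality on a set of positive measure. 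Writing $a = \min(\theta_0,\half-\theta_0) \in \,]0,\tquarter]$, on $]a,\tquarter[$ this inequality is precisely hypothesis~(*). On $]0,a[$ I would note that $\theta < a \le \theta_0$ forces $V(\theta) > 0$, while $\theta < a \le \half-\theta_0$ forces $\half-\theta > \theta_0$ and hence $V(\half-\theta) < 0$; thus the difference is strictly positive there, and $]0,a[$ has positive measure because $a > 0$. Combining the two ranges yields $v_2 > 0$.

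The computation is short once set up, so the only place I expect to need care is the folding identity itself --- keeping track of the substitution limits and the sign change of $\sin(4\pi\theta)$ under $\theta \mapsto \half-\theta$ --- together with the bookkeeping that hypothesis~(*) and the sign pattern of~$V$ indeed cover all of $]0,\tquarter[$. No single step looks genuinely hard.
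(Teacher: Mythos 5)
Your proof is correct. Part~a) is the same one-line observation the paper makes ($v_1>0$ because $V$ and $\sin(2\pi\theta)$ are both positive on $\left]0,\half\right[$, and $c_1=4\pi(v_1-\pi D)$). For part~b) you take a more direct route than the paper: the paper invokes Proposition~\ref{1/n-periodic} to reduce to part~a) for the rolled-up function $\tilde V_2$, using that $\tilde V_2(2\varphi)=V(\varphi)-V(\thalf-\varphi)$, whereas you fold the integral defining $v_2$ about $\tquarter$ and show the folded integrand is nonnegative and positive on a set of positive measure. These are the same identity in two guises --- the positivity of $(\tilde V_2)_1=2V_2$ is exactly the positivity of your folded integral --- so nothing conceptually new is needed, but your version is self-contained and does not require the machinery of higher-periodicity correspondence. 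It is also slightly more careful on one point: hypothesis~(*) is only a non-strict inequality on $\left]\min(\theta_0,\thalf-\theta_0),\tquarter\right[$, so the paper's assertion that $\tilde V_2>0$ on all of $\left]0,\half\right[$ is not literally what the hypotheses give; your observation that strict positivity on $\left]0,\min(\theta_0,\thalf-\theta_0)\right[$ (which has positive length) together with nonnegativity elsewhere suffices for $v_2>0$ is the correct way to close that small gap. What the paper's approach buys instead is the structural insight that instability of the second mode is literally instability of the first mode for the $\thalf$-periodic subsystem, which is used repeatedly elsewhere in the paper.
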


\begin{proof}
  Statement~a) follows from $v_1 > 0$ and $c_1 = 4\pi(v_1 - \pi D)$;
  statement~b) follows from Proposition~\ref{1/n-periodic} and part~a)
  because \( \tilde{V}_2 > 0 \) on \( \left]0,\half\right[ \).
\end{proof}

The second condition~(*) for instability of the second mode can be interpreted
in the following way.
If there are already two peaks forming then attraction towards the nearer peak
must be stronger than towards the second peak.
Using exactly the assumptions of~b), it has been shown by Primi et~al.~\cite{Primi}
that equation~\eqref{tp} has a \( \half \)-periodic stationary solution with
two equally large and very high maxima if the diffusion coefficient~$D$ is small enough.

\begin{theorem}[Global stability of the constant solution]
\label{Thm:globalstab}
  Assume that 
  \[ \rho' = \sum_{k > 0} \frac{k (k^2 - 1)}{6} v_k^2 < \infty \]
  (this is the case when $V$ is twice continuously differentiable, for
  example). If
  \[ v_k < \pi D \, k  - \frac{\rho'}{k} \qquad\text{for all $k \ge 1$,} \]
  then every nonnegative initial function $f(0,\cdot) \in \C(S^1)$ of mass~$1$
  converges to the constant function~$1$ --- there is some $c > 0$ such that
  \[ \bigl\|\partial_\theta^n (f(t,\cdot) - 1)\bigr\|_\infty
       = O_n\bigl(e^{-ct}\bigr)
  \]
  for all $n \ge 0$.
  
  For given~$V\!$, the assumption is satisfied whenever
  \[ D > \max_{k \ge 1} \frac{\rho' + k v_k}{k^2 \pi} , \]
  i.e., if diffusion is strong enough.
\end{theorem}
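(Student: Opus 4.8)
The plan is to run the same Lyapunov--functional argument as in the proof of Theorem~\ref{thm:stability}, but to exploit one extra a~priori bound that holds for \emph{every} nonnegative solution of mass~$1$, so that no smallness assumption on $f(0,\cdot)$ is needed. That bound is
\[ |f_k(t)| = \Bigl|\Int f(t,\theta)e^{-2\pi ik\theta}\,d\theta\Bigr|
     \le \Int f(t,\theta)\,d\theta = f_0 = 1 \]
for all $k \in \Z$ and all $t \ge 0$, valid because non-negativity and mass are preserved (Proposition~\ref{invariance}). First I would pass to the rescaled Fourier system~\eqref{rescaled} with $\delta = \pi D$ and work with the same weighted norm $\|f\|^2 = \sum_{k\ge 1} \frac1k |f_k|^2$ as before.

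Next I would reuse verbatim the computation of $\tfrac12\frac{d}{dt}\|f\|^2$ from the proof of Theorem~\ref{thm:stability}, namely
\[ \tfrac12\,\tfrac{d}{dt}\|f\|^2
     = \sum_{k\ge 1}(v_k - \pi D k)|f_k|^2
       + \Re\Bigl(\sum_{k,m\ge 1} v_{k+m} f_{k+m} \bar f_k \bar f_m\Bigr), \]
together with the Cauchy--Schwarz chain culminating in~\eqref{E:bound}. The one decisive change is the final step: instead of extracting the supremum $\sup_l \frac{l^2(l^2-1)}{6}v_l^2 = \rho^2$ (which produces an extra factor $\|f\|^2$, hence the cubic term $\rho\|f\|^3$ responsible for the smallness condition in Theorem~\ref{thm:stability}), I would use the a~priori bound $|f_l| \le 1$ to estimate $\sum_{l\ge 1}\frac{l(l^2-1)}{6}v_l^2 |f_l|^2 \le \rho'$. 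Then~\eqref{E:bound} reads $|\Re(\,\cdot\,)|^2 \le \rho'\,\|f\|^4$, so the nonlinear term is bounded by $\sqrt{\rho'}\,\|f\|^2$, which is \emph{linear} in $\|f\|^2$ rather than of higher order. Combining this with the linear part $\sum_k(v_k - \pi D k)|f_k|^2$, the hypothesis $v_k < \pi D k - \rho'/k$ is exactly what forces every coefficient of $|f_k|^2$ to be strictly negative, bounded above by $-c/k$ for some $c > 0$, so that $\frac{d}{dt}\|f\|^2 \le -2c\|f\|^2$ and hence $\|f(t,\cdot)\| \le \|f(0,\cdot)\|\,e^{-ct}$ for all $t \ge 0$, with \emph{no} restriction on the size of $f(0,\cdot)$. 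Since $\frac1k|f_k|^2 \le \|f\|^2$, this gives $|f_k(t)| \le \sqrt{k}\,\|f(0,\cdot)\|\,e^{-ct}$.

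This last estimate is precisely the hypothesis of Lemma~\ref{LemmaThm1} (with $\alpha = \thalf$), so the bootstrap from the proof of Theorem~\ref{thm:stability} applies unchanged: repeated application gives $|f_k(t)| \le C_N k^{-N} e^{-ct}$ for every $N$ and all $t \ge t_0 > 0$, from which $\|\partial_\theta^n(f(t,\cdot)-1)\|_\infty = O_n(e^{-ct})$ for all $n \ge 0$ follows by summing the Fourier series (the summability $\sum_l l|v_l| < \infty$ used by the lemma holds under the stated assumptions, e.g.\ for $V \in \C^2$). The final sufficient condition is then pure algebra: solving $v_k < \pi D k - \rho'/k$ for $D$ gives $D > \frac{k v_k + \rho'}{\pi k^2}$, and taking the maximum over $k \ge 1$ yields the stated bound. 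The main obstacle is the middle step---recognizing that the uniform bound $|f_k| \le 1$ lets one replace the supremum by $\rho'$ and thereby turn the energy inequality into a genuinely linear one with a strictly negative rate under the given condition; everything else is either a direct quotation of the proof of Theorem~\ref{thm:stability} or routine.
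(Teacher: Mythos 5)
Your strategy coincides with the paper's: the published proof of Theorem~\ref{Thm:globalstab} consists precisely of the observation that non-negativity and mass~$1$ give the a priori bound $|f_l(t)| \le 1$, which lets one replace the factor $\sum_{l\ge1}\frac{l^2(l^2-1)}{6}v_l^2\,\frac1l|f_l|^2$ in~\eqref{E:bound} by the constant~$\rho'$ rather than by $\rho^2\|f\|^2$, turning the energy estimate into the linear inequality~\eqref{diffineq2}; the bootstrap of Lemma~\ref{LemmaThm1} is then quoted unchanged. You identified exactly this mechanism, and the concluding algebra for the condition on~$D$ is the same.

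There is, however, one step in your writeup that does not close, and it is worth spelling out because it points at an imprecision in the paper as well. From $|\Re(\cdot)|^2 \le \rho'\|f\|^4$ you correctly get $|\Re(\cdot)| \le \sqrt{\rho'}\,\|f\|^2$. With that bound the coefficient of $\frac1k|f_k|^2$ in the energy inequality is $k(v_k-\pi Dk)+\sqrt{\rho'}$, so the hypothesis that is ``exactly'' needed is $v_k < \pi Dk - \sqrt{\rho'}/k$; the stated hypothesis only gives $k(v_k-\pi Dk) < -\rho'$, which dominates $-\sqrt{\rho'}$ precisely when $\rho' \ge 1$. For $\rho' < 1$ your chain of estimates does not yield a strictly negative decay rate. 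The paper's display~\eqref{diffineq2} is calibrated to the stated hypothesis because its first ``$\le$'' bounds the nonlinear term by $\bigl(\sum_l\frac{l^2(l^2-1)}{6}v_l^2\frac1l|f_l|^2\bigr)\|f\|^2 \le \rho'\|f\|^2$, i.e.\ without the square root --- but that step does not follow from~\eqref{E:bound} either, since Cauchy--Schwarz produces the square root of that sum. So either $\rho'$ should be read as the square root of the displayed series, in analogy with $\rho$ in Theorem~\ref{thm:stability}, or the hypothesis should read $v_k < \pi Dk - \sqrt{\rho'}/k$; under either repair your argument is complete. A last small point: the bootstrap also needs $\sum_l l|v_l| < \infty$, which is not implied by $\rho' < \infty$ alone and should be carried along as a standing assumption, as in Theorem~\ref{thm:stability}.
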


\begin{proof}
  Inequality~\eqref{E:bound} implies
  \begin{equation} \label{diffineq2}
    \frac{1}{2}\,\frac{d}{dt} \|f\|^2
      \le \Bigl(\sum_{l \ge 1} \frac{l^2(l^2-1)}{6} v_l^2 \frac{1}{l} |f_l|^2
                 + \max_{k \ge 1} k(v_k - k \delta)\Bigr) \|f\|^2
      \le \bigl(\rho' + \max_{k \ge 1} k(v_k - k \delta)\Bigr) \|f\|^2 .
  \end{equation}
  (with \( \|f\|^2 = \sum_{k \ge 1} \tfrac{1}{k} |f_k|^2 \) as before
  and using $|f_l| \le 1$).
  Now the proof proceeds as for Theorem~\ref{thm:stability},
  but using~\eqref{diffineq2} instead of~\eqref{diffineq1}.
\end{proof}


\section{Convergence to peak solutions in case of small initial support
         and no diffusion}
\label{compact-support}

If there is {\em no random turning}, i.e., $D = 0$ in~\eqref{tp},
then sums of delta peaks can be stationary solutions. To make this precise,
we have to define the right hand side of equation~\eqref{tp} for suitable
distributions on the circle.

The kind of distribution we are mostly interested
in are (positive) \emph{measures}, but it turns out that it is advantageous
to use \emph{differentiable measures} instead. The main reason for this is
that the map $S^1 \to \D^0(S^1)$, $\psi \mapsto \delta_\psi$, is continuous,
but not differentiable (since the derivative at zero would have to be $-\delta'_0$).
As a map to $\D^1(S^1)$, it becomes differentiable, though.

Let $k \ge 0$.
The space $\D^k(S^1)$ is the dual space of the space $\C^k(S^1)$ of $k$~times
continuously differentiable functions on the circle~$S^1$.
The elements of~$\D^0(S^1)$ are called \emph{measures}, and the elements
of~$\D^1(S^1)$ are called \emph{differentiable measures} on~$S^1$.
By standard theory (see for example~\cite{Jantscher}), $\D^k(S^1)$ can be
identified with the subspace of $\D(S^1)$ (which is the dual of~$\C^\infty(S^1)$)
consisting of distributions of order $\le k$ --- $f \in D^k(S^1)$ if and only if
\[ \text{there is $C > 0$ such that} \qquad
    \bigl|\langle f, h \rangle\bigr| \le C \sum_{j=0}^k \|h^{(j)}\|_\infty
    \quad \text{for all $h \in \C^\infty(S^1)$.}
\]
In particular, we can consider $\D^0(S^1)$ as a subspace of~$\D^1(S^1)$.
$\D(S^1)$ can be
identified with the space of 1-periodic distributions on~$\R$, compare~\cite{Jantscher}.

A distribution \( f \in \D^k(S^1) \) is \emph{non-negative} if
\( \langle f, h \rangle \ge 0 \) for every non-negative function
\( h \in \C^{\infty}(S^1) \). We write
$\D^k_+(S^1)$ for the set of non-negative distributions in~$\D^k(S^1)$.
Note that for $f \in \D^k_+(S^1)$ and test functions $h_1 \le h_2$ we have
$\langle f, h_1 \rangle \le \langle f, h_2 \rangle$.

The \emph{support} \( \supp f \) of \( f \in \D^k(S^1) \) is the smallest closed
subset of \( S^1 \) outside of which \( f = 0 \).

Let \( \psi \in S^1 \).
The \emph{delta distribution} \( \delta_\psi \in \D^0_+(S^1) \) is defined by
\( \langle \delta_\psi, h \rangle = h(\psi) \)
where \( h \) is a test function.

The \emph{mass} of a distribution \( f \in \D(S^1) \) is defined as
\( \int_{S^1} f(\theta) \, d\theta = \langle f,1 \rangle \).

The \emph{convolution} of a distribution \( f \in \D^k(S^1) \) with a function
\( V \in \C^{k}(S^1) \) is defined as
\[ (V*f)(\theta) = \langle f, V(\cdot-\psi) \rangle \]
for \( \theta \in S^1 \).
It is known that \( V*f \in \C^k(S^1) \) (see \cite{Jantscher}).
For example, \( (V*\delta_{\psi})(\theta) = V(\theta-\psi) \)
for \( \theta, \psi \in S^1 \).

Convergence \( f_n \stackrel{\D}{\to} f \) in~$\D(S^1)$ (or~$\D^k(S^1)$)
as \( n \to \infty \) means that
\( \langle f_n, h \rangle \to \langle f, h \rangle \) as \( n \to \infty \)
for all test functions \( h \in \C^{\infty}(S^1) \).


\subsection{No time-periodic solutions} \strut

We now prove a lemma that is inspired by~\cite{Mogilner2}.
It will let us deduce that there are no
time-periodic solutions when there is no diffusion. We define
\[ \Phi(\theta) = \int_0^\theta V(\psi)\,d\psi \,; \]
this makes sense as a function on~$S^1\!$, since $\int_{S^1} V(\psi)\,d\psi = 0$.
Note that $\Phi$ is an even function.

\begin{lemma}  \label{LemmaPot}
  Let $f(t,\theta) \ge 0$ be a solution of equation~\eqref{tp} with $D = 0$, and define
  \[ \Psi(t) = \int_{S^1} \int_{S^1} \Phi(\theta-\psi) f(t,\psi) f(t,\theta)
                  \,d\psi\,d\theta \,.
  \]
  Then $\frac{d\Psi}{dt} \le 0$, with equality if and only if $f$ is
  a stationary solution.
\end{lemma}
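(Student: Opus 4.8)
The plan is to differentiate $\Psi$ under the integral sign, use the evolution equation to replace $\partial_t f$, integrate by parts on the circle, and then recognize that the result collapses into a manifestly nonpositive expression. First I would exploit the symmetry coming from $\Phi$ being even: since $\Phi(\theta-\psi)$ is unchanged under swapping $\theta \leftrightarrow \psi$, the product rule applied to the two factors $f(t,\psi)$ and $f(t,\theta)$ yields two equal contributions, so
\[ \frac{d\Psi}{dt} = 2 \int_{S^1} \int_{S^1} \Phi(\theta-\psi)\, f(t,\psi)\, \partial_t f(t,\theta)\,d\psi\,d\theta \,. \]

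Next I would substitute the equation (with $D=0$), namely $\partial_t f(t,\theta) = \partial_\theta\bigl((V*f(t,\cdot))\, f(t,\cdot)\bigr)(\theta)$, and integrate by parts in the $\theta$ variable. There are no boundary terms because we are on $S^1$. Using $\partial_\theta \Phi(\theta-\psi) = \Phi'(\theta-\psi) = V(\theta-\psi)$, this gives
\[ \frac{d\Psi}{dt} = -2 \int_{S^1} \int_{S^1} V(\theta-\psi)\, f(t,\psi)\, (V*f(t,\cdot))(\theta)\, f(t,\theta)\,d\psi\,d\theta \,. \]
The inner integral over $\psi$ is exactly $(V*f(t,\cdot))(\theta)$ by definition of the convolution, so the double integral collapses to
\[ \frac{d\Psi}{dt} = -2 \int_{S^1} (V*f(t,\cdot))(\theta)^2\, f(t,\theta)\,d\theta \,. \]
Since $f \ge 0$, the integrand is nonnegative and hence $\frac{d\Psi}{dt} \le 0$.

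For the equality statement, I would observe that $\frac{d\Psi}{dt} = 0$ forces the nonnegative integrand $(V*f)^2 f$ to vanish identically, which is equivalent to $(V*f)\cdot f = 0$ on all of $S^1$; by the characterization of stationary solutions noted just after Proposition~\ref{estimates} (for $D=0$, $f$ is stationary if and only if $(V*f)\cdot f = 0$), this is precisely the condition that $f$ be a stationary solution. The main obstacle I anticipate is purely one of justification rather than of the computation itself: I must ensure that differentiation under the integral sign and the integration by parts are legitimate for the class of solutions considered — for classical (smooth, a-priori bounded) solutions of the type guaranteed by Primi et~al.~\cite{Primi} this is routine, but one should note the regularity ($V*f \in \C^1$ and $f$ continuously differentiable in $\theta$, continuous in $t$) that makes every step valid, and the evenness of $\Phi$ must be invoked cleanly to get the factor of $2$.
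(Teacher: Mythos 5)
Your proposal is correct and follows essentially the same route as the paper: symmetrize using the evenness of $\Phi$ to get the factor of $2$, substitute the equation, integrate by parts so that $\partial_\theta\Phi(\theta-\psi)=V(\theta-\psi)$ turns the inner integral into $(V*f)(\theta)$, and conclude $\frac{d\Psi}{dt}=-2\int_{S^1}(V*f)^2 f\,d\theta\le 0$, with the equality case handled exactly as in the paper via $(V*f)\cdot f=0$.
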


\begin{proof}
  We have, by symmetry and integration by parts,
  \begin{align*}
    \frac{d}{dt} \Psi(t)
      &= \int_{S^1} \int_{S^1} \Phi(\theta-\psi)
                         f(t,\psi) \dfdt(t,\theta) \, d\psi\,d\theta
          + \int_{S^1} \int_{S^1} \Phi(\theta-\psi)
                         \dfdt(t, \psi) f(t, \theta) \, d\psi\,d\theta \\
      &= 2 \int_{S^1} \int_{S^1} \Phi(\theta-\psi)
                         f(t,\psi) \dfdt(t,\theta) \, d\psi\,d\theta \\
      &= 2 \int_{S^1} \Bigl(\int_{S^1} \Phi(\theta-\psi) f(t, \psi) \,d\psi\Bigr)
                         \frac{\partial f}{\partial\theta}
                           \bigl((V*f(t,\cdot))(\theta) f(t, \theta)\bigr)
                       d\theta \\
      &= -2 \int_{S^1} \frac{\partial}{\partial \theta}
                          \bigl(\Phi * f(t, \cdot)\bigr)(\theta)
                       \, \bigl(V*f(t,\cdot)\bigr)(\theta) f(t, \theta)\,d\theta \\
      &= -2 \int_{S^1}
          \bigl(\bigl(V * f(t, \cdot)\bigr)(\theta)\bigr)^2 f(t,\theta)\,d\theta \\
      &\le 0 \,.
  \end{align*}
  From this computation, we also see that $\frac{d\Psi}{dt}(t_0) = 0$ only if
  $(V*f(t_0,\cdot)) f(t_0,\cdot) = 0$, which implies
  $\dfdt(t_0,\theta) = 0$ for all~$\theta \in S^1$, so $f(t_0,\cdot)$
  must be a stationary solution.
\end{proof}

In~\cite{Mogilner2}, this `potential'~$\Psi$ is introduced in the case when
$f$ is a sum of point masses. In terms of the positions of these point
masses, equation~\eqref{tp} (with $D = 0$) then turns into a gradient
system, which shows that it will evolve towards an equilibrium. If $V$
is positive on $\left]0,\half\right[$, then the potential has its global
minimum when all the masses are concentrated at the same point. So
Mogilner et~al.\ conclude that the system of point masses will converge
to this equilibrium (which corresponds to a single delta peak) if sufficiently
perturbed from its initial state. We use the continuous analogue to show
that at least no time-periodic solutions can arise.

\begin{corollary} \label{CorTimePeriodic}
  Let $f(t,\theta) \ge 0$ be a time-periodic solution of~\eqref{tp} with $D = 0$.
  Then $f$ is in fact a stationary solution.
\end{corollary}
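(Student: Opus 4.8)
The plan is to exploit the monotonicity of the `potential' $\Psi$ established in Lemma~\ref{LemmaPot} together with the periodicity assumption, which together force $\Psi$ to be constant and hence $f$ to be stationary.

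First I would fix the period: by assumption there is some $T > 0$ with $f(t+T,\cdot) = f(t,\cdot)$ for all~$t$. Since $\Psi(t)$ depends only on~$f(t,\cdot)$, it follows that $\Psi$ is itself $T$-periodic; in particular $\Psi(T) = \Psi(0)$. On the other hand, Lemma~\ref{LemmaPot} tells us that $\frac{d\Psi}{dt} \le 0$, so $\Psi$ is non-increasing on $[0,T]$. A non-increasing function that takes the same value at both endpoints of an interval must be constant on that interval, so $\Psi(t) = \Psi(0)$ for all $t \in [0,T]$, and by periodicity for all $t \ge 0$.

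Consequently $\frac{d\Psi}{dt}(t) = 0$ for every~$t$. I would then invoke the equality clause of Lemma~\ref{LemmaPot}, which states that $\frac{d\Psi}{dt}(t_0) = 0$ forces $f(t_0,\cdot)$ to be a stationary solution, i.e.~$\frac{\partial f}{\partial t}(t_0,\theta) = 0$ for all~$\theta$. Applying this at every $t_0 \ge 0$ shows that $\frac{\partial f}{\partial t} \equiv 0$, so $f$ does not depend on~$t$ and is a genuine stationary solution.

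I expect no serious obstacle here, since the corollary is an essentially immediate consequence of the monotonicity and equality conditions already packaged into Lemma~\ref{LemmaPot}. The only point deserving a word of care is that the equality case must hold pointwise in~$t$ rather than merely in an integrated sense; this is exactly what the proof of Lemma~\ref{LemmaPot} provides, since it identifies the integrand $\bigl((V*f(t_0,\cdot))(\theta)\bigr)^2 f(t_0,\theta)$ as the obstruction and deduces $\frac{\partial f}{\partial t}(t_0,\cdot) = 0$ at each individual~$t_0$.
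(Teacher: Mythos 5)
Your argument is correct and is essentially identical to the paper's own proof: both deduce $\Psi(T)=\Psi(0)$ from periodicity, combine this with the monotonicity $\frac{d\Psi}{dt}\le 0$ from Lemma~\ref{LemmaPot} to get $\frac{d\Psi}{dt}\equiv 0$, and then invoke the equality clause of that lemma to conclude stationarity. Nothing further is needed.
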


\begin{proof}
  We have $f(T,\cdot) = f(0,\cdot)$ for some $T > 0$. This implies that
  $\Psi(T) = \Psi(0)$. Since by Lemma~\ref{LemmaPot}, $\frac{d\Psi}{dt} \le 0$,
  we must have $\frac{d\Psi}{dt}(t) = 0$ for all $0 \le t \le T$.
  By Lemma~\ref{LemmaPot} again, this implies that $f$ is stationary.
\end{proof}

This argument does not carry over to the case with diffusion.
Given the equalizing effect of diffusion, it seems rather unlikely
that time-periodic solutions appear when $D$ increases from zero.
However, we cannot rule out the possibility of some kind of diffusion-driven
instability that might lead to a periodic solution.


\subsection{Convergence to peaks} \strut

Let \( V \in \C^{1}(S^1) \) be odd, and let \( f \in \D^1_+(S^1) \).
Let \( h \in \C^\infty(S^1) \) be a test function.
Then the transport term of~\eqref{tp} is defined as
\begin{equation} \label{tptest}
  \bigl\langle \frac{\partial}{\partial\theta} \left((V*f) \,f \right), h \bigr\rangle
   = - \langle f, (V*f) h' \rangle .
\end{equation}

\begin{proposition} \label{statpeaks}
  Let \( V \in \C^{1}(S^1) \) be odd and consider equation~\eqref{tp} with $D = 0$.
  \renewcommand{\theenumi}{\roman{enumi}}
  \renewcommand{\labelenumi}{\theenumi)}
  \vspace{-2ex}
  \begin{enumerate}
    \item A single peak \( f = \delta_\psi \in \D^1_+(S^1) \) with \( \psi \in S^1 \)
          is a stationary solution of~\eqref{tp} with mass~1.
    \item Let \( V(\theta_0) = 0 \) for fixed \( 0 < \theta_0 \le \half \).
          Two peaks with arbitrary masses and distance \( \theta_0 \) are a stationary
          solution,
          i.e.~\( f = m_1 \delta_{\psi} + m_2 \delta_{\psi+\theta_0} \in \D^1_+(S^1) \)
          is a stationary solution of~\eqref{tp} for any \( \psi \in S^1 \)
          and \( m_1, m_2 > 0 \). If \( m_1 + m_2 = 1 \) then \( f \) has mass 1.
    \item \( n \ge 3 \) peaks with equal masses and equal distances
          are a stationary solution:
          For \( 1 \le j \le n \) let \( \psi_j \in S^1 \)
          with \( \psi_{j+1} - \psi_j = \frac{1}{n} \)
          (where \( \psi_{n+1} = \psi_1 \)).
          Then \( f = \frac{1}{n} \sum_{j=1}^n \delta_{\psi_j} \in \D^+(S^1) \)
          is a stationary solution of~\eqref{tp} with mass~1.
  \end{enumerate}
\end{proposition}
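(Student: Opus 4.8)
The plan is to work entirely through the distributional definition \eqref{tptest} of the transport term. Since $D = 0$, a distribution $f \in \D^1_+(S^1)$ is a stationary solution of~\eqref{tp} exactly when $\frac{\partial}{\partial\theta}((V*f)f) = 0$ in $\D^1(S^1)$, i.e.\ when $\langle f, (V*f)h' \rangle = 0$ for every test function $h \in \C^\infty(S^1)$. In each of the three cases $f$ is a finite sum $\sum_k m_k \delta_{\psi_k}$ of delta peaks, and since $V*f \in \C^1(S^1)$ (by the result quoted just after the definition of convolution), the function $(V*f)h'$ is continuous, so pairing it with $f$ gives $\sum_k m_k (V*f)(\psi_k) h'(\psi_k)$. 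I would therefore reduce all three statements to the single pointwise assertion that the velocity field vanishes at every peak location, $(V*f)(\psi_k) = 0$; this kills the pairing identically in~$h$.

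I would then check this pointwise condition case by case, each time using $(V*\delta_\psi)(\theta) = V(\theta-\psi)$ together with the oddness of~$V$. For~(i), $f = \delta_\psi$ gives $(V*f)(\psi) = V(0) = 0$. For~(ii), at the peak $\psi$ I compute $(V*f)(\psi) = m_1 V(0) + m_2 V(-\theta_0) = -m_2 V(\theta_0) = 0$, using $V(\theta_0) = 0$, and symmetrically $(V*f)(\psi+\theta_0) = m_1 V(\theta_0) + m_2 V(0) = 0$. For~(iii), at any peak $\psi_k$ I use $\psi_k - \psi_j = (k-j)/n$ to get $(V*f)(\psi_k) = \frac{1}{n}\sum_{j=1}^n V((k-j)/n)$; as $j$ runs over a full residue system modulo~$n$ and $V$ is $1$-periodic, this equals $\frac{1}{n}\sum_{l=0}^{n-1} V(l/n)$, independent of~$k$.

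The one genuinely nonroutine point is this last sum: I must show $\sum_{l=0}^{n-1} V(l/n) = 0$. I would read it off from the properties of $V_n$ recorded earlier: $V_n(0) = \sum_{j=0}^{n-1} V(-j/n) = -\sum_{j=0}^{n-1} V(j/n)$ by oddness, while the stated vanishing $V_n(k/(2n)) = 0$ at $k = 0$ gives $V_n(0) = 0$; hence the sum is zero. Alternatively I would pair the term $l$ with the term $n-l$, noting $V((n-l)/n) = V(-l/n) = -V(l/n)$, so these cancel, while the unpaired terms $l = 0$ and (when $n$ is even) $l = n/2$ contribute $V(0) = 0$ and $V(\half) = 0$ respectively.

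Finally, the membership claims — that each $f$ lies in $\D^1_+(S^1)$ and has the asserted mass — follow immediately from the definitions of $\delta_\psi$ and of mass as $\langle f, 1\rangle$, so I would dispatch them in a single sentence. The main obstacle, such as it is, is purely the symmetric cancellation in the $n$-peak sum; everything else is a direct substitution into~\eqref{tptest}.
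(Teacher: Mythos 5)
Your proposal is correct, and for parts~(i) and~(ii) it coincides with the paper's proof: both plug the sum of deltas into the distributional definition~\eqref{tptest} and reduce to the vanishing of the velocity field $(V*f)$ at the peak locations, using $V(0)=0$ for~(i) and $V(0)=V(\pm\theta_0)=0$ for~(ii). The difference is in part~(iii). The paper disposes of it in one line by combining statement~(i) with Proposition~\ref{1/n-periodic}: $n$ equidistant equal-mass peaks correspond, under the rolling-up map $\tilde f(t,n\theta)=f(t,\theta)$, to a single peak for the modified interaction $\tilde V_n$, which is stationary by~(i). You instead compute directly that $(V*f)(\psi_k) = \frac{1}{n}\sum_{l=0}^{n-1} V(l/n)$ and show this sum vanishes by the pairing $l \leftrightarrow n-l$ together with $V(0)=V(\half)=0$. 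Your route is slightly longer but entirely self-contained and arguably cleaner in one respect: Proposition~\ref{1/n-periodic} is stated and proved for function-valued solutions, so invoking it for delta distributions tacitly assumes the correspondence extends to the distributional setting, whereas your computation works directly in $\D^1_+(S^1)$ and needs nothing beyond the oddness and periodicity of~$V$. Both arguments are valid; the cancellation $\sum_{l=0}^{n-1}V(l/n)=0$ that you isolate is exactly the identity $V_n(0)=0$ recorded in the paper's discussion of the functions~$V_n$, so the two proofs are ultimately resting on the same symmetry.
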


Note that \( n \ge 3 \) peaks with {\em different} masses are in general no stationary
solution contrary to the ``degenerate'' case \( n = 2 \) (where \( V(\half) = 0 \)
holds always).
The situation changes e.g.~for \( n = 4 \) if \( V(\frac{1}{4}) = 0 \).
Then again stationary solutions consisting of four peaks with
distance~\( \frac{1}{4} \) and possibly different masses occur.

\begin{proof}
  Let \( h \in \C^\infty(S^1) \) be a test function.

  i) Using \eqref{tptest} and $V(0) = 0$, we get
  $\langle \delta_\psi, (V*\delta_\psi) h' \rangle = V(0) h'(0) = 0$.

  ii) Let $\psi_2 = \psi + \theta_0$.
  Using \eqref{tptest} and $V(0) = V(\theta_0) = V(-\theta_0) = 0$, we get
  \begin{align*}
    \langle m_1 \delta_\psi &+ m_2 \delta_{\psi_2},
        (V*(m_1 \delta_\psi + m_2 \delta_{\psi_2})) h' \rangle \\
    &= m_1 \langle \delta_\psi, (V*(m_1 \delta_\psi + m_2 \delta_{\psi_2})) h' \rangle
        + m_2 \langle \delta_{\psi_2},
                      (V*(m_1 \delta_\psi + m_2 \delta_{\psi_2})) h' \rangle \\
    &= m_1 (m_1 V(0) + m_2 V(-\theta_0)) h'(\psi)
        + m_2 (m_1 V(\theta_0) + m_2 V(0)) h'(\psi_2)
    = 0 .
  \end{align*}

  iii) This follows immediately from statement~i) and Proposition~\ref{1/n-periodic}.
\end{proof}

In the following theorem we show that solutions converge to single peaks if the support of the initial function is sufficiently small.

\begin{theorem} \label{smallsupport}
  Let $V \in \C^1(S^1)$ be odd with $V'(0) > 0$ and $V > 0$ on
  $\left]0,\theta_v\right[$, where $0 < \theta_v \le \half$.
  Assume that \( f \ge 0 \) is a solution of~\eqref{tp} with $D = 0$ such that
  $\supp f(0,\cdot) \subset I$ where $I \subset S^1$ is a closed interval
  with $\Vol(I) < \theta_v$ and such that $m(I, f(0,\cdot)) = 1$.

  Then $\supp f(t,\cdot) \subset I$ for all $t \ge 0$, and $f(t,\cdot)$ converges
  to the delta distribution $\delta_{M}$, where $M = M(I, f(0,\cdot))$ is
  the local barycenter of~$f(0,\cdot)$ on~$I$.
\end{theorem}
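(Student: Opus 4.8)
The plan is to combine the invariance-of-support machinery already established for the diffusion-free transport equation on the circle with the convergence-to-a-delta argument proved for the equation on the real line, transferring the latter via the projection $p$. I would proceed in three stages: first fix the support, then lift to $\R$ to control the dynamics, then run a second-moment argument to get convergence.

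First I would establish that $\supp f(t,\cdot) \subset I$ for all $t \ge 0$. Write $I = [\alpha,\beta]$ with $\beta - \alpha = \Vol(I) < \theta_v$. The natural tool is Lemma~\ref{L:invsupp} applied to the complementary arcs, or more directly the barrier argument from the proof of Lemma~\ref{L:invsupp}: I would track the endpoints under the flow $\Phi$ of $-(V*f)$ and show that the flux $(V*f(t,\cdot))(\beta)\,f(t,\beta)$ at the upper endpoint is nonpositive while at the lower endpoint it is nonnegative, so mass cannot leak out. The key computation is that when $\supp f(t,\cdot) \subset [\alpha,\beta]$ and all interaction distances $\theta - \psi$ lie in $\left]-\theta_v,\theta_v\right[$, the oddness together with $V>0$ on $\left]0,\theta_v\right[$ forces $(V*f(t,\cdot))(\beta) = \int_I V(\beta-\psi) f(t,\psi)\,d\psi \ge 0$ (since $\beta - \psi \in \left]0,\theta_v\right[$ for $\psi$ in the interior of $I$), and symmetrically $(V*f(t,\cdot))(\alpha) \le 0$. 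This pins the endpoints inward; a boundary-infimum argument as in Lemma~\ref{L:invsupp} closes it.

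Once the support is trapped in $I$ with $\Vol(I) < \theta_v \le \half$, I would transfer to the real line. Choosing $I' = [a,b]$ with $p(I') = I$ and $b - a = \Vol(I) < \theta_v$, and letting $W = p^*(V)$, the crucial point is that for all $\theta,\psi \in I$ the circle-difference $\theta - \psi$ (in the sense of Definition~\ref{D:interval}) lies in $\left]-\theta_v,\theta_v\right[$, the range where $V$ agrees with $W$ and is strictly positive on the positive side. Hence on $I$ the convolution $(V*f)$ coincides with the honest real-line convolution $(W*g)$ of a lift $g$ supported in $I'$, and by Proposition~\ref{P:2eqns} (read in reverse) the dynamics of $f$ on $I$ is exactly the real-line dynamics of $g$ on $I'$. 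Now $W$ satisfies the hypotheses of Proposition~\ref{P:peak-on-R}: $W'(0) = V'(0) > 0$ and $W(x) = V(x) > 0$ for $0 < x < \theta_v \ge b-a$. That proposition then gives convergence of $g(t,\cdot)$ to $m\,\delta_{c}$ with $m = 1$ and $c = \int x\,p^*(f)(0,x)\,dx$, which projects under $p$ precisely to the local barycenter $M = M(I, f(0,\cdot))$. Pushing forward via $p_*$ and using that $p$ is an isometry on the interior of $I$ yields $f(t,\cdot) \stackrel{\D}{\to} \delta_{M}$.

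The main obstacle I anticipate is the careful bookkeeping at the two stages where the circle and the line are identified. The support argument must be run on the \emph{circle}, where oddness of $V$ is what produces the inward-pointing flux, yet the convergence argument lives on $\R$, where Proposition~\ref{P:peak-on-R}'s second-moment estimate $\frac{d}{dt}M(t) \le -2\mu M(t)$ is available. The delicate point is verifying that the relevant differences $x - y$ stay within $[-(b-a),b-a] \subset \left]-\theta_v,\theta_v\right[$ so that the local coercivity $xW(x) \ge \mu x^2$ and the positivity $W > 0$ genuinely hold on the support throughout the evolution; this is exactly guaranteed by the strict inequality $\Vol(I) < \theta_v$, so the hypothesis is sharp here. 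A secondary technical matter is matching the distributional convergence on $S^1$ (pairing against arbitrary $h \in \C^\infty(S^1)$) with the $\C^2(\R)$-test-function convergence of Proposition~\ref{P:peak-on-R}; since $p^*$ carries smooth functions on $S^1$ to $\C^\infty$ (hence $\C^2$) $1$-periodic functions on $\R$ and all mass sits in the single fundamental cell $I'$, the pairings agree and the identification of the limit barycenter is immediate.
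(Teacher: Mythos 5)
Your proposal is correct in substance and follows the same route as the paper: transfer the dynamics to the real line via~$p$ and invoke the second-moment argument of Proposition~\ref{P:peak-on-R}. The one place where you genuinely diverge from the paper is the treatment of the hypothesis ``$W(x)>0$ for all $x>0$'' in Proposition~\ref{P:peak-on-R}, which $W=p^*(V)$ does \emph{not} satisfy: being $1$-periodic and odd, it takes negative values on $\left]1-\theta_v,1\right[$. You resolve this by first proving support confinement on the circle (via Lemma~\ref{L:invsupp} or its barrier argument), so that all interaction distances stay in $\left]-\theta_v,\theta_v\right[$, and then re-running the second-moment estimate using only the local positivity and coercivity of~$W$ near~$0$ --- which works, but forces you to reopen the proof of Proposition~\ref{P:peak-on-R} rather than cite it. The paper instead takes $W=p^*(V)$ only on $[-\ell,\ell]$ (with $\ell=\Vol(I)$) and extends it to an odd function on~$\R$ that is positive on all of $\left]0,\infty\right[$; Proposition~\ref{P:peak-on-R} then applies verbatim to give $\supp g(t,\cdot)\subset I'$, after which one observes a posteriori that the extension is never seen by the dynamics, so $g$ is also a solution for $p^*(V)$ and Proposition~\ref{P:2eqns} pushes everything down to~$S^1$. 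This renders your entire first stage (the circle-side support argument) unnecessary, since the support statement on~$S^1$ falls out of the real-line one; conversely, your route has the small advantage of never modifying~$V$. Both are valid, and your ordering (support first, then second moments) is precisely what rescues your otherwise too-literal citation of Proposition~\ref{P:peak-on-R}.
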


\begin{proof}
  We lift $I$ to an interval $I' = [a,b] \subset \R$ and let $\ell = b-a < \theta_v$.
  Then $f(0,\cdot) = p_*(g_0)$ for a function $g_0 : \R \to \R$ with
  $\supp g_0 \subset I'$.

  We consider equation~\eqref{tpR}, where we take $W = p^*(V)$ on~$[-\ell,\ell]$ and
  extend it to all of~$\R$ in such a way that it is odd and satisfies $W(x) > 0$
  for $x > 0$ (which is possible since $p^*(V) > 0$ on $\left]0,\ell\right]$).
  Let $g$ be the solution of equation~\eqref{tpR} with $D = 0$ such that
  $g(0,\cdot) = g_0$. By Proposition~\ref{P:peak-on-R},
  $\supp g(t,\cdot) \subset I'$ for all $t \ge 0$. So the function
  $(W*g(t,\cdot)) g(t,\cdot)$ appearing
  on the right hand side of equation~\eqref{tpR} will always be equal to
  $(p^*(V)*g(t,\cdot)) g(t,\cdot)$ (since $W(x-y) = p^*(V)(x-y)$ when $x,y \in I'$).
  This means that $g$ will also be the solution of equation~\eqref{tpR},
  if we use $p^*(V)$ instead of~$W$. By Proposition~\ref{P:2eqns}, we then have
  $f(t,\cdot) = p_*(g(t,\cdot))$ for all $t \ge 0$. In particular,
  $\supp f(t,\cdot) \subset p(\supp g(t,\cdot)) \subset p(I') = I$.
  By Proposition~\ref{P:peak-on-R}, we also know that $g(t,\cdot)$ converges
  to~$\delta_{M'}$, where $M' = \int_{\R} x g(0,x)\,dx$,
  so $f(t,\cdot) = p_*(g(t,\cdot))$ will converge to~$\delta_{M}$, since $M = p(M')$.
\end{proof}

\begin{example}
  Initial growth of an already sharp peak may be also seen if \( 0 < D \ll 1 \)
  even if no single peak solution is expected, see Figure \ref{initial_growth}.
  In Figure \ref{initial_growth} the interaction function is \( \half \)-periodic,
  \( V(\theta) = \sin(4 \pi \theta) \).
  Only the second eigenvalue becomes positive for decreasing diffusion,
  and Primi's et~al.~\cite{Primi} conditions for existence of a single peak are
  not satisfied.
  Therefore, a single peak solution is not expected for~\eqref{tp} with \( D > 0 \),
  but as shown in the figure a sharp peak grows initially.
  Indeed, we did not see the development of a second peak although we had the program
  run up to times larger than~140.

  Note that by Proposition~\ref{P:perstat} any stationary solution must be
  $\half$-periodic in this example (and such stationary solutions exist and are
  expected to be stable). Therefore the behavior described here must be an
  artifact of the numerics. We think that the explanation is that the time scale
  for the transition from one peak to two peaks should be roughly of the order
  of~$e^{1/D}$, so the rate of change would be of order~$e^{-1/D}$, which is
  numerically zero if $D$ is as small as in the example.
\end{example}

\begin{figure}
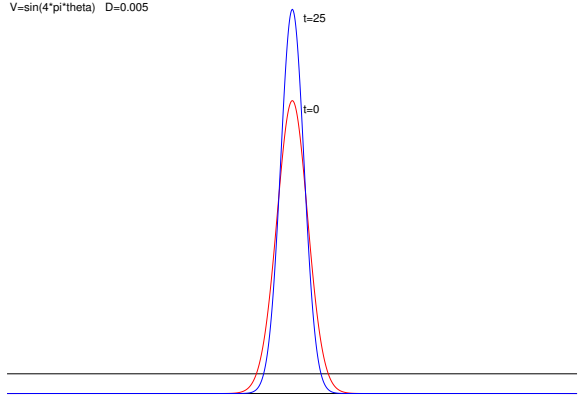

  \Gr{example}{0.5\textwidth}
  \caption{\label{initial_growth}
  \( V(\theta) = \sin(4 \pi \theta) \), \( D = 0.005 \), and 
  \( f(0,\cdot) \) is the stationary solution of~\eqref{tp}
  for \( V(\theta) = \sin(2 \pi \theta) \), \( D = 0.005 \).
  The numerical algorithm is described in Section~\ref{numerical-algorithms}
  (Fourier based method with 61 Fourier coefficients).
  }
\end{figure}

The next corollary follows directly from Theorem~\ref{smallsupport} and
Proposition~\ref{1/n-periodic}.
The assumptions on~\( V_n \) imply also that the \( n \)-th eigenvalue is positive
(Corollary~\ref{cor_stab}).

\begin{corollary}
  Let $n \ge 1$, \( 0 < \theta_v < \frac{1}{2n} \), and let \( V \in \C^{1}(S^1) \)
  be odd and such that \( V_n'(0) > 0 \) and \( V_n > 0 \) on
  \( \left]0,\theta_v\right[ \).
  Let $f \ge 0$ be a solution of equation~\eqref{tp} with $D = 0$ such that
  \( f(0,\cdot) \) is \( \frac{1}{n} \)-periodic with mass~1, and
  assume that there exists an interval \( I \subset S^1 \) with \( \Vol(I) < \theta_v \)
  such that \( \supp f(0,\cdot) \subset \bigcup_{j=0}^{n-1} (I+\frac{j}{n}) \).

  Then the solution \( f \) converges to \( n \) peaks of equal masses at equal
  distances:
  \[ f(t,\cdot) \stackrel{\D}{\to}
       \frac{1}{n} \sum_{j=0}^{n-1} \delta_{M+\tfrac{j}{n}}
  \qquad \text{as $t \to \infty$,} \qquad
    \text{where $M = M(I, f(0,\cdot))$.}
  \]
\end{corollary}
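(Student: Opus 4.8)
The plan is to reduce the $n$-peak statement to the single-peak case already handled in Theorem~\ref{smallsupport}, exploiting the $\frac{1}{n}$-periodicity of the initial data together with the correspondence provided by Proposition~\ref{1/n-periodic}. Since $f(0,\cdot)$ is $\frac{1}{n}$-periodic, Proposition~\ref{invariance} guarantees that $f(t,\cdot)$ remains $\frac{1}{n}$-periodic for all $t \ge 0$ (periodicity of initial functions is preserved). Thus I may pass to the rescaled function $\tilde{f}(t,\theta) = f(t,\frac{\theta}{n})$, which by Proposition~\ref{1/n-periodic} solves the modified equation~\eqref{E:per} with diffusion coefficient $n^2 D = 0$ and interaction rate $\tilde{V}_n$ in place of $V$.

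First I would check that $\tilde{V}_n$ satisfies the hypotheses of Theorem~\ref{smallsupport}. The assumptions $V_n'(0) > 0$ and $V_n > 0$ on $\left]0,\theta_v\right[$ translate into the corresponding statements for $\tilde{V}_n$ after rescaling the argument by $n$: indeed $\tilde{V}_n(\theta) = V_n(\frac{\theta}{n})$ essentially up to the reindexing in the definitions, so $\tilde{V}_n > 0$ on $\left]0, n\theta_v\right[$ and $\tilde{V}_n'(0) > 0$; here the constraint $\theta_v < \frac{1}{2n}$ ensures $n\theta_v < \frac12$, so the positivity interval is a genuine subinterval of $\left]0,\frac12\right[$ and $\tilde{V}_n$ is odd and $\C^1$. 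Next I would identify the rescaled support: the hypothesis $\supp f(0,\cdot) \subset \bigcup_{j=0}^{n-1}(I + \frac{j}{n})$ together with $\frac1n$-periodicity means that $\tilde{f}(0,\cdot)$ has support contained in the single closed interval $nI$ of length $n\,\Vol(I) < n\theta_v$, and its local mass on $nI$ equals $1$ (the total mass of the $\frac1n$-periodic function on one period).

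With these verifications in place, Theorem~\ref{smallsupport} applies directly to $\tilde{f}$: the support stays inside $nI$ for all time, and $\tilde{f}(t,\cdot) \stackrel{\D}{\to} \delta_{\tilde{M}}$ where $\tilde{M} = M(nI, \tilde{f}(0,\cdot))$ is the local barycenter. I would then translate this conclusion back to $f$ via $f(t,\theta) = \tilde{f}(t,n\theta)$. The support statement becomes $\supp f(t,\cdot) \subset \bigcup_{j=0}^{n-1}(I+\frac{j}{n})$, and the limiting distribution $\delta_{\tilde{M}}$ for $\tilde{f}$ corresponds, under the $n$-fold covering $\theta \mapsto n\theta$, to the sum $\frac1n \sum_{j=0}^{n-1}\delta_{M + \frac{j}{n}}$ with $M = \frac{\tilde{M}}{n} = M(I, f(0,\cdot))$ --- the $n$ preimages of $\tilde{M}$ under the covering map, each carrying mass $\frac1n$ by equidistribution of the periodic mass.

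The main obstacle I anticipate is bookkeeping the relationship between the two notions of barycenter and convergence under the rescaling $\theta \mapsto n\theta$, and in particular verifying that weak-$\ast$ convergence $\tilde{f}(t,\cdot) \stackrel{\D}{\to} \delta_{\tilde{M}}$ on $S^1$ pulls back correctly to convergence of $f(t,\cdot)$ to the $n$-peak distribution. Concretely, for a test function $h$ on $S^1$ one writes $\langle f(t,\cdot), h\rangle$ as an integral over one period and changes variables $\theta \mapsto n\theta$ to express it through $\tilde{f}(t,\cdot)$ paired against the $\frac1n$-periodization of $h$; passing to the limit and unfolding then yields the claimed sum of delta peaks. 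This is essentially routine once the identification of $\tilde{M}$ with $nM$ and the preservation of periodicity are established, so the argument reduces cleanly to the already-proved single-peak theorem.
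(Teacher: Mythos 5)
Your proposal is correct and is exactly the paper's argument: the paper states that the corollary ``follows directly from Theorem~\ref{smallsupport} and Proposition~\ref{1/n-periodic}'', i.e.\ preserve the $\tfrac{1}{n}$-periodicity, pass to $\tilde{f}(t,\theta)=f(t,\tfrac{\theta}{n})$ solving the rolled-up equation with $\tilde{V}_n$ and $n^2D=0$, apply the single-peak theorem, and unfold. Your bookkeeping of $\tilde{V}_n(\theta)=V_n(\tfrac{\theta}{n})$, the rescaled support and barycenter, and the pullback of the weak limit to $\tfrac{1}{n}\sum_j\delta_{M+j/n}$ is the (omitted) routine verification and is accurate.
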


The following corollary states that several peaks at random distances may form if
\( V \) is zero in a neighborhood of \( \thalf \). There is, however, a minimal
distance between them.

\begin{corollary}
  Let \( V \in \C^1(S^1) \) be odd and \( 0 < \theta_v < \half \) such that
  \( V > 0 \) on \( \left]0,\theta_v\right[ \) and
  \( V(\theta) = 0 \) for \( \theta_v \le |\theta| \le \half \).
  Let \( n \ge 1 \) and \( f(0,\cdot) \in \C^+(S^1) \),
  \( \supp f(0,\cdot) \subset \bigcup_{j=1}^n I_j \)
  where \( \Vol(I_j) < \theta_v \) and \( \dist ( I_j, I_k ) > \theta_v \)
  for all \( 1 \le j \neq k \le n \).
  Define \( m_j(t) = m(I_j, f(t,\cdot)) \)
  and \( M_j(t) = M(I_j, f(t,\cdot)) \).

  Then \( \supp (f(t,\cdot)) \subset \bigcup_j I_j \) for all \( t \ge 0 \)
  and the masses \( m_j \) as well as the barycenters \( M_j \) are constant in~\( t \).
  The solution \( f \) converges to a sum of delta peaks:
  \[ f(t,\cdot) \stackrel{\D}{\to} \sum_{j=1}^{n} m_j \, \delta_{M_j}
      \qquad \text{as $t \to \infty$.}
  \]
\end{corollary}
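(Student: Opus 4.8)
The plan is to exploit that mass sitting on different intervals cannot interact, so that the evolution decouples into $n$ independent single-interval problems, each covered by Theorem~\ref{smallsupport}. The starting point is that $V$ never couples distinct intervals: if $\theta\in I_j$ and $\psi\in I_k$ with $j\neq k$, then $\dist(I_j,I_k)>\theta_v$ forces the arc-distance to satisfy $\theta_v<|\theta-\psi|\le\half$, whence $V(\theta-\psi)=0$; whereas for $\theta,\psi\in I_j$ the difference $\theta-\psi$ lies in $\left]-\theta_v,\theta_v\right[$. Consequently, for any nonnegative function supported on $\bigcup_k I_k$ and any $\theta\in I_j$, the convolution with $V$ at $\theta$ depends only on the part of that function lying in $I_j$.

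Next I would let $f^{(j)}$ be the solution of~\eqref{tp} with $D=0$ whose initial datum is the restriction of $f(0,\cdot)$ to $I_j$ (extended by zero), and set $F=\sum_j f^{(j)}$. After normalising the mass $m_j$ to~$1$, Theorem~\ref{smallsupport} applies on $I_j$ (its volume is below $\theta_v$ and $V>0$ on $\left]0,\theta_v\right[$) and yields $\supp f^{(j)}(t,\cdot)\subset I_j$ for all $t$ together with $f^{(j)}(t,\cdot)\stackrel{\D}{\to}m_j\delta_{M_j}$, where $M_j=M(I_j,f(0,\cdot))$. Because the summand supports are disjoint and $(V*F)(\theta)=(V*f^{(j)})(\theta)$ for $\theta\in I_j$ by the non-interaction observation, one gets $(V*F)\,F=\sum_l (V*f^{(l)})\,f^{(l)}$, so that $\partial_t F=\sum_j\partial_\theta\bigl((V*f^{(j)})\,f^{(j)}\bigr)=\partial_\theta\bigl((V*F)\,F\bigr)$; thus $F$ solves~\eqref{tp} with the same initial datum as $f$, and uniqueness gives $f=F$. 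This establishes $\supp f(t,\cdot)\subset\bigcup_j I_j$ for all $t$ and, in the limit, $f(t,\cdot)\stackrel{\D}{\to}\sum_j m_j\,\delta_{M_j}$.

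For the constancy of the individual masses and barycenters I would invoke Lemma~\ref{L:local} on a slightly enlarged interval $\hat I_j\supset I_j$, chosen so that the $\hat I_j$ remain pairwise separated by more than $\theta_v$. Because $f(t,\cdot)$ vanishes on a neighbourhood of $\partial\hat I_j$, there is no flow across $\partial\hat I_j$; and whenever $\theta\in\hat I_j$ and $\psi\notin\hat I_j$ we have $V(\theta-\psi)f(t,\theta)f(t,\psi)=0$, since either $f(t,\psi)=0$ or the two points lie in different intervals where $V$ vanishes. Lemma~\ref{L:local} then gives that $m(\hat I_j,f(t,\cdot))$ and $M(\hat I_j,f(t,\cdot))$ are time-invariant, and these coincide with $m_j(t)$ and $M_j(t)$ because $f(t,\cdot)$ is supported in $I_j$ inside $\hat I_j$.

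The main obstacle I anticipate is making this decoupling rigorous: since~\eqref{tp} is nonlinear, $F=\sum_j f^{(j)}$ is a solution only because $V$ vanishes beyond $\theta_v$, and this identity together with the appeal to uniqueness must be checked with care. If one prefers to avoid uniqueness, the inclusion $\supp f(t,\cdot)\subset\bigcup_j I_j$ can be obtained directly as in the proof of Lemma~\ref{L:invsupp}: at the first time mass would cross into a gap, the boundary velocity $-(V*f)$ points back into the adjacent interval, because the value of $V*f$ at the upper end $\beta_j$ of $I_j$ is $\int_{I_j}V(\beta_j-\psi)f(t,\psi)\,d\psi>0$ and at the lower end $\alpha_j$ is $\int_{I_j}V(\alpha_j-\psi)f(t,\psi)\,d\psi<0$, once the (conserved) local mass on $I_j$ is positive and $V>0$ on $\left]0,\theta_v\right[$. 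The remaining points --- that each piece meets the hypotheses of Theorem~\ref{smallsupport} and that the enlarged intervals $\hat I_j$ stay pairwise non-interacting --- are routine.
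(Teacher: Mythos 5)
Your proposal is correct and follows essentially the same route as the paper's own (much terser) proof: the key observation in both is that $V$ vanishes at all inter-interval distances, so the evolution decouples into $n$ independent problems each governed by Theorem~\ref{smallsupport}. Your version merely makes the decoupling rigorous (via the superposition $F=\sum_j f^{(j)}$ plus uniqueness, and Lemma~\ref{L:local} for the invariance of the local masses and barycenters), which the paper leaves implicit.
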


\begin{proof}
  As long as the support of $f(t,\cdot)$ stays contained in the union of the~$I_j$,
  the evolution of~$f$ on each of the~$I_j$ proceeds independently, since the part
  of~$f$ contained in the other intervals does not contribute to the right hand
  side of equation~\eqref{tp}. But then Theorem~\ref{smallsupport} shows that the part
  that starts in~$I_j$ stays in~$I_j$ and converges to a delta peak as stated.
\end{proof}

In the following theorem we are interested in convergence to two peaks, but
Proposition~\ref{1/n-periodic} cannot be used since $f(0,\cdot)$ is not
necessarily $\thalf$-periodic. Neither is $V$ $\thalf$-periodic in general.

We first prove a lemma that allows us to show convergence to the
delta-distribution if mass is constant and second moments converge to zero.

\begin{lemma} \label{convergence-peak}
  Let $I \subset S^1$ be a closed interval.
  Let \( f_n \in \D^1_+(S^1) \) with \( \supp f_n \subset I \) and
  $\langle f_n, 1 \rangle = m > 0$ for all \( n \ge 1 \).
  Let $M \in I$, and let $q_M : S^1 \to \R$ be a $\C^\infty$ function
  satisfying $q_M(\theta) = (\theta-M)^2$ for all $\theta \in I$.
  If $\langle f_n, q_M \rangle \to 0$ as $n \to \infty$, then
  $f_n \stackrel{\D}{\to} m \delta_{M}$ as $n \to \infty$.
\end{lemma}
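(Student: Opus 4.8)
The plan is to verify distributional convergence directly, i.e.\ to show that $\langle f_n, h \rangle \to m\,h(M)$ for every test function $h \in \C^\infty(S^1)$. The idea is to expand $h$ to first order around $M$ on the interval~$I$, read off the constant term as the expected limit $m\,h(M)$, and show that the linear and the quadratic-remainder contributions both vanish as $n \to \infty$, using only the mass condition, the second-moment hypothesis $\langle f_n, q_M \rangle \to 0$, and the nonnegativity of the~$f_n$.

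Concretely, I would first fix a smooth function $\ell_M \in \C^\infty(S^1)$ that agrees with the difference $\theta - M$ on~$I$ (this is possible since $I \subsetneq S^1$, using Definition~\ref{D:interval}); note that then $\ell_M^2 = q_M$ on~$I$. For a given~$h$, Taylor's theorem applied to the lift $p^*(h)$ on $I' = [a,b]$ gives, on~$I$, the decomposition $h = h(M) + h'(M)\,\ell_M + r_h$, where $r_h$ is smooth and satisfies $|r_h(\theta)| \le C\,q_M(\theta)$ for $\theta \in I$, with $C = \tfrac12 \max_{I'} |p^*(h)''|$. Because $\supp f_n \subset I$, the pairing $\langle f_n, \cdot \rangle$ depends only on values on~$I$, so
\[ \langle f_n, h \rangle = m\,h(M) + h'(M)\,\langle f_n, \ell_M \rangle + \langle f_n, r_h \rangle . \]
It then remains to show that the last two terms tend to zero.

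The remainder term is immediate: since $f_n \ge 0$ is supported on~$I$ and $|r_h| \le C\,q_M$ there, one gets $|\langle f_n, r_h \rangle| \le C\,\langle f_n, q_M \rangle \to 0$. The linear term is the real point, and here is where I expect the main obstacle to lie: the hypothesis controls only the spread $\langle f_n, q_M \rangle$, so the first moment $\langle f_n, \ell_M \rangle$ must be squeezed by a Cauchy--Schwarz argument, which is available precisely because the~$f_n$ are nonnegative. For every $\lambda \in \R$ the function $(\ell_M - \lambda)^2$ is a nonnegative test function, so $\langle f_n, (\ell_M - \lambda)^2 \rangle \ge 0$; expanding on~$I$ (again using the support condition and $\ell_M^2 = q_M$ there) yields the quadratic inequality
\[ m\,\lambda^2 - 2\lambda\,\langle f_n, \ell_M \rangle + \langle f_n, q_M \rangle \ge 0 \qquad\text{for all } \lambda \in \R . \]
Nonnegativity of this quadratic forces its discriminant to be $\le 0$, i.e.\ $\langle f_n, \ell_M \rangle^2 \le m\,\langle f_n, q_M \rangle \to 0$, so $\langle f_n, \ell_M \rangle \to 0$.

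Combining the three pieces gives $\langle f_n, h \rangle \to m\,h(M) = \langle m\delta_M, h \rangle$ for every~$h$, which is exactly $f_n \stackrel{\D}{\to} m\delta_M$. Apart from the Cauchy--Schwarz step, the only delicate bookkeeping is that $\theta - M$ and the associated quadratic are a priori defined only on~$I$; this is harmless because the support condition lets me replace all global smooth extensions by their restrictions to~$I$ inside the pairings, and the nonnegativity of~$f_n$ (equivalently, the monotonicity $h_1 \le h_2 \Rightarrow \langle f_n, h_1 \rangle \le \langle f_n, h_2 \rangle$ noted earlier) converts the pointwise bounds on~$I$ into the desired estimates.
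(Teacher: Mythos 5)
Your proposal is correct and follows essentially the same route as the paper's own proof: the same first-order expansion $h = h(M) + h'(M)\,\ell_M + r$ with $|r| \le C\,q_M$ on~$I$, and the same key estimate $\langle f_n, \ell_M \rangle^2 \le m\,\langle f_n, q_M \rangle$. The only cosmetic difference is that you derive that estimate from the nonnegativity of the quadratic in~$\lambda$, which is precisely the standard proof of the Cauchy--Schwarz inequality for the semi-inner product $(g,h) \mapsto \langle f_n, gh \rangle$ that the paper invokes directly.
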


Note that the same conclusion is valid when we only assume that
$q_M(\theta) \ge c (\theta-M)^2$ for all $\theta \in I$ with some $c > 0$.

\begin{proof}
  Let $\ell_M : S^1 \to\R$ be a $\C^\infty$ function such that
  $\ell_M(\theta) = \theta - M$ for $\theta \in I$, and define
  $a_n = \langle f_n, \ell_M \rangle$ and $b_n = \langle f_n, q_M \rangle$
  Then by the Cauchy-Schwarz inequality
  (applied to the inner product $(g,h) \mapsto \langle f_n, gh \rangle$
  for functions $g, h : I \to \R$, concretely with $g = 1$ and $h = \ell_M$)
  we have $a_n^2 \le m b_n$. Since $b_n \to 0$,
  we must have $a_n \to 0$ as well.
  Let $h \in \C^{\infty}(S^1)$ be a test function. We can write
  \[ h(\theta) = h(M) + h'(M) \ell_M(\theta) + r(\theta) q_M(\theta) \]
  for $\theta \in I$, with $|r(\theta)| \le C = \frac{1}{2} \max_I |h''|$.
  We then have
  \begin{align*}
    \bigl| \langle f_n - m \delta_{M}, h \rangle \bigr|
    &= \bigl| \langle f_n, h(M) + h'(M) \ell_M + r q_M \rangle - m h(M) \bigr| \\
    &= \bigl| h'(M) \langle f_n(x), \ell_M \rangle
                +  \langle f_n(x), r q_M \rangle \bigr| \\
    &\le |h'(M)| a_n + C b_n \\
    &\to 0 \quad \text{as $n \to \infty$.} \qedhere
  \end{align*}
\end{proof}

The final positions $\bar{M}_0$ and~$\bar{M}_1$ of the two peaks in the
theorem below are obtained from the special case when $V(p(x)) = cx$
with $c > 0$ in an interval around zero and $V$ is $\half$-periodic.
In this case one gets equations $\frac{dM_j}{dt}(t) = - c M_j(t)$
(with $M_j(t)$ defined as in the proof below), so that $M_j(t) \to 0$,
justifying the choice of~$\bar{M}_j$.

\begin{theorem} \label{twopeak}
  Let \( V \in \C^{1}(S^1) \) be odd with \( V'(0) > 0 \) and \( V'(\thalf) > 0 \),
  and assume that there exist \( 0 < \theta_1 \le \theta_2 < \half \)
  such that \( V > 0 \) on \( \left]0,\theta_1\right[ \)
  and \( V < 0 \) on \( \left]\theta_2,\half\right[ \).

  Let $f \ge 0$ be a solution of equation~\eqref{tp} with $D = 0$ such that
  \( f(0,\cdot) \in \C^+(S^1) \) with \( \int_{S^1} f(0,\theta) \,d\theta = 1 \) and
  \( \supp f(0,\cdot) \subset I_0 \cup I_1 \)
  where \( I_0 \subset S^1 \) is a closed interval such that
  \( \Vol(I_0) < \min\{\theta_1,\thalf-\theta_2\} \)
  and \( I_1 = I_0 + \half \).
  Then \( \supp f(t,\cdot) \subset I_0 \cup I_1 \) for all \( t \ge 0 \).

  Let $I$ be a closed interval in~$S^1$ containing $I_0 \cup I_1$.
  Define the local masses \( m_j(t) = m(I_j,f(t,\cdot)) \),
  and let $M(t) = M(I, f(t,\cdot))$ be the local barycenter on~$I$.
  Then $m_0(t)$, $m_1(t)$ and~$M(t)$ are constant in time; we write $m_0$, $m_1$
  and~$M$ for their values. Define
  \[ \bar{M}_0 = M + \thalf m_1 \qquad \text{and} \qquad
    \bar{M}_1 = M - \thalf m_0 = \bar{M}_0 - \thalf .
  \]

  Then \( f(t,\cdot) \) converges to a sum of two opposite peaks:
  \[ f(t,\cdot) \to m_0 \delta_{\bar{M}_0} + m_1 \delta_{\bar{M}_1}
    \qquad \text{as \quad $t \to \infty$.}
  \]
\end{theorem}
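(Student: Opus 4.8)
The plan is to mimic the one–peak argument of Theorem~\ref{smallsupport}: transfer to the line via Proposition~\ref{P:2eqns}, then track second moments as in Proposition~\ref{P:peak-on-R}, the new feature being that there are now two clusters whose mutual interaction near distance~$\thalf$ must be controlled.

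\emph{Support invariance.} I would apply Lemma~\ref{L:invsupp} with forbidden set $A = \overline{S^1 \setminus (I_0 \cup I_1)}$, the union of the two gaps $G_1 = [\beta, \alpha+\thalf]$ and $G_2 = [\beta+\thalf, \alpha]$ (writing $I_0 = [\alpha,\beta]$, $\ell = \Vol(I_0)$). For a nonzero nonnegative $h$ supported on $I_0 \cup I_1$ I check the four endpoint conditions: at the lower end $\beta$ of $G_1$ the argument $\beta-\psi$ runs through $[0,\ell]\subset[0,\theta_1)$ for $\psi\in I_0$ and, by oddness, through $(\theta_2,\thalf]$ reflected for $\psi\in I_1$, so $V(\beta-\cdot)\ge 0$ on $I_0\cup I_1$ and vanishes only at two points; since $h$ has positive mass, $(V*h)(\beta) > 0$. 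The other three endpoints are symmetric, using the sign hypotheses on $V$ and $\ell < \min\{\theta_1,\thalf-\theta_2\}$. Lemma~\ref{L:invsupp} then gives $\supp f(t,\cdot)\subset I_0\cup I_1$ for all~$t$.

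\emph{Conserved quantities.} Cutting $S^1$ inside the gaps, where $f$ vanishes, kills all boundary–flux terms in Lemma~\ref{L:local}, so the local masses $m_0,m_1$ are constant; taking for $I$ an arc containing $I_0\cup I_1$ with both endpoints in one gap, the ``no external interaction'' hypothesis is automatic (nothing lies outside $\supp f$), so the barycenter $M$ is constant as well. Equivalently, I lift to $\R$ with $W=p^*(V)$, solve~\eqref{tpR} with data supported in $I_0'\cup I_1'$ (where $I_1'=I_0'+\thalf$); Proposition~\ref{P:tpRbasic} then gives that the mass and the first moment $\mu = m_0 M_0(t)+m_1 M_1(t)$ are conserved, $M_j(t)$ being the barycenter of the $j$-th lifted cluster.

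\emph{Concentration and the limit.} Set $Q_j = \int_{I_j'}\int_{I_j'}(x-y)^2 g(t,x)g(t,y)\,dx\,dy$ and $Q=Q_0+Q_1$. Integrating by parts, $\frac{dQ_0}{dt} = -4 m_0 \int_{I_0'}(x-M_0)\,(W*g)(x)\,g(x)\,dx$; splitting $W*g$ into the self part over $I_0'$ and the cross part over $I_1'$, I bound the self part below using $uW(u)\ge \nu u^2$ on $[-\ell,\ell]$ (valid since $W>0$ on $(0,\ell]$), and I handle the cross part using that $W$ has the sign of $(\cdot+\thalf)$ on $[-\thalf-\ell,-\thalf+\ell]$ together with $W'(-\thalf)=V'(\thalf)>0$. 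This yields $\frac{dQ}{dt}\le -\kappa Q$ for some $\kappa>0$, so $Q\to 0$ and each cluster concentrates. In parallel, $m_0\frac{dM_0}{dt} = -\int_{I_0'}\int_{I_1'}W(x-z)g(x)g(z)$ (the self-interaction integrates to zero by oddness) and the analogous formula for $M_1$ give $\frac{d}{dt}(M_1-M_0) = -V'(\thalf)\,(M_1-M_0-\thalf)+(\text{higher order})$, so the separation relaxes to $\thalf$; with $m_0 M_0+m_1 M_1=\mu$ this forces $M_0\to\mu-\thalf m_1$ and $M_1\to\mu+\thalf m_0$, which on $S^1$ are the stated $\bar M_0,\bar M_1$ (one checks these are the unique pair with separation $\thalf$, masses $m_0,m_1$, and barycenter $M$). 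Finally the parallel–axis identity $\int_{I_j}(\theta-\bar M_j)^2 f = Q_j/(2m_j)+m_j(M_j-\bar M_j)^2\to 0$ lets Lemma~\ref{convergence-peak} conclude, on each $I_j$, that $f(t,\cdot)\to m_0\delta_{\bar M_0}+m_1\delta_{\bar M_1}$. The main obstacle is precisely the cross-cluster term in the second-moment estimate: unlike the single-peak case, the spread of each cluster is driven by the opposite cluster at distance $\approx\thalf$, and showing this contribution is again contracting is exactly where $V'(\thalf)>0$ enters, requiring $W'\ge 0$ (equivalently $V$ increasing) on a $\thalf$-neighborhood of width $\ell$, which holds by continuity of $V'$ once $\Vol(I_0)$ is small enough; the same positivity simultaneously drives the separation to $\thalf$, so a single differential-inequality argument handles concentration and the location of the limit together.
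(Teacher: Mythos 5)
Your first two steps (support invariance via Lemma~\ref{L:invsupp} at the four gap endpoints, and conservation of $m_0$, $m_1$, $M$ via Lemma~\ref{L:local}) coincide with the paper's proof and are fine. The gap is in the concentration step. You track the intra-cluster spreads $Q_j=\int\!\!\int_{I_j'\times I_j'}(x-y)^2gg$, i.e.\ variances about the \emph{moving} barycenters $M_j(t)$, and treat the relaxation of the separation $M_1-M_0$ to $\thalf$ as a separate ODE. After integrating by parts, the cross-cluster contribution to $\dot Q_0+\dot Q_1$ is $-4\int\!\!\int\bigl[m_0(x-M_0)-m_1(z-M_1)\bigr]W(x-z)\,g(x)g(z)\,dz\,dx$: the linear factor carries the weights $m_0,m_1$, while the only sign information available under the stated hypotheses is $V(\phi+\thalf)\,\phi\ge b\phi^2$ for $\phi\in[-\eps,\eps]$, which is useful only when the linear factor is (a constant multiple of) $x-z+\thalf$ itself. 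For $m_0\neq m_1$ the bracket is not proportional to $x-z+\thalf$, and the mismatched piece $(m_0^2(x-M_0)-m_1^2(z-M_1))W(x-z)$ is of the same order as the good terms with uncontrolled sign, so $\dot Q\le-\kappa Q$ does not follow from the ingredients you list. Your proposed fix --- requiring $W'\ge0$ on a width-$\ell$ neighborhood of $-\thalf$, ``once $\Vol(I_0)$ is small enough'' --- is a strengthening of the hypotheses: the theorem fixes $\Vol(I_0)<\min\{\theta_1,\thalf-\theta_2\}$ and allows $V$ to be non-monotone on $\left]\theta_2,\thalf\right[$. Likewise, the error in $\frac{d}{dt}(M_1-M_0)=-V'(\thalf)(M_1-M_0-\thalf)+\dots$ is only $o(|x-z+\thalf|)$, which is not small until the clusters have already concentrated, so the two differential inequalities are genuinely coupled and you have not closed the system.

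The paper sidesteps all of this by measuring second moments about the \emph{fixed} limit positions: $m_{2,j}(t)=\int_{I_j}(\theta-\bar M_j)^2f$ with unnormalized first moments $M_j(t)=\int_{I_j}(\theta-\bar M_j)f$. Integration by parts then produces the factor $(\theta-\bar M_j)$ with coefficient $1$ on both clusters, so the two cross integrals combine into $\int_{I_0}\!\int_{I_1}V(\theta-\psi)\bigl[(\theta-\bar M_0)-(\psi-\bar M_1)\bigr]ff$, whose linear factor equals $(\theta-\psi)-\thalf$ --- exactly matching the shift in the argument of $V$ --- so the single inequality $V(\phi+\thalf)\phi\ge b\phi^2$ applies verbatim. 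Together with the identity $M_0(t)+M_1(t)=0$ (which is where the specific choice $\bar M_0=M+\thalf m_1$, $\bar M_1=M-\thalf m_0$ enters), the quadratic form collapses exactly to $\frac{dm_2}{dt}\le-2b\,m_2$, proving concentration and locating the limit in one stroke, under precisely the stated hypotheses. I would recommend replacing your $Q_j$ and the separate barycenter ODE by these fixed-center moments; the rest of your outline (including the final appeal to Lemma~\ref{convergence-peak} on each $I_j$) then goes through.
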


\begin{proof}
  We first show that $\supp f(t,\cdot) \subset I_0 \cup I_1$ for all $t \ge 0$.
  Let $I_0 = [\alpha, \beta]$,
  then $I_1 = [\alpha',\beta'] = [\alpha+\half, \beta+\half]$;
  let $\eps = \Vol(I_0) = \beta-\alpha < \min\{\theta_1,\thalf-\theta_2\}$.
  Let $h : S^1 \to \R_+$ with $\supp h \subset I_0 \cup I_1$.
  Because $V > 0$
  on $\left]0,\theta_1\right[ \cup \left]\half, 1-\theta_2\right[$, we see that
  \begin{displaymath}
    (V*h)(\beta)
    = \int_{\alpha}^{\beta}
        V(\underbrace{\beta - \psi}_{0 < \bullet <\theta_1}) h(\psi) \,d\psi
      + \int_{\alpha'}^{\beta'}
        V(\underbrace{\beta - \psi}_{\half < \bullet < 1-\theta_2}) h(\psi)\,d\psi
    > 0 .
  \end{displaymath}
  Similarly,
  \begin{displaymath}
    (V*h)(\alpha)
    = \int_{\alpha}^{\beta}
        V(\underbrace{\alpha - \psi}_{-\theta_1< \bullet <0}) h(\psi) \,d\psi
      + \int_{\alpha'}^{\beta'}
        V(\underbrace{\alpha - \psi}_{\theta_2 <\bullet< \half }) h(\psi)\,d\psi
    < 0 ,
  \end{displaymath}
  because \( V \) is negative on both intervals.
  In the same way, we get $(V*h)(\alpha') > 0$ and $(V*h)(\beta') < 0$.
  By Lemma~\ref{L:invsupp} it follows that $\supp f(t,\cdot) \subset I_0 \cup I_1$
  for all \( t \ge 0 \).

  By Lemma~\ref{L:local}, the local masses $m_j(t)$ are then constant,
  and the same is true for $M(t)$ (since $f(t,\cdot) = 0$ on $S^1 \setminus I$
  for all $t \ge 0$).
  We now define local first and second moments by
  \[  M_j(t) = \int_{I_j} (\theta-\bar{M}_j)  f(t,\theta)\,d\theta
      \qquad \text{and} \qquad
      m_{2,j}(t) = \int_{I_j} (\theta-\bar{M}_j)^2 f(t,\theta)\,d\theta
      \quad \text{for $j \in \{0,1\}$.}
  \]
  Note that the expression $\theta-\bar{M}_j$ makes sense on~$I_j$ (even on~$I$ ---
  we lift to a suitable interval in~$\R$ and compute the difference there).
  The definitions imply that
  \( M_0(t) + M_1(t) = M - m_0 \bar{M}_0 - m_1 \bar{M}_1 = 0 \)
  for all \( t \ge 0 \).
  Let $m_2(t) = m_{2,0}(t) + m_{2,1}(t)$. We will
  show that $m_2(t) \to 0$ as $t \to \infty$. The time derivative of~$m_2$ is
  (after integration by parts)
  \begin{align*}
    \frac{dm_2}{dt}(t)
      &= -2 \Bigl(\int_{I_0} \int_{I_0} + \int_{I_0} \int_{I_1}\Bigr) \:
            V(\theta-\psi) (\theta-\bar{M}_0) f(t,\psi) f(t,\theta) \, d\psi\,d\theta
        \\
      & \qquad{}
        - 2 \Bigl(\int_{I_1} \int_{I_0} + \int_{I_1} \int_{I_1}\Bigr) \:
          V(\theta-\psi) (\theta-\bar{M}_1) f(t,\psi) f(t,\theta) \, d\psi\,d\theta .
  \end{align*}
  To estimate this, we observe that there is $b > 0$ such that
  \[ V(\phi) \phi \ge b \phi^2 \qquad\text{and}\qquad
    V(\phi+\thalf) \phi \ge b \phi^2
    \qquad \text{for all $\phi \in [-\eps,\eps]$.}
  \]
  This is because $V > 0$ on $\left]0,\eps\right]$ and on
  $\left]\thalf,\thalf+\eps\right]$ and because $V(0) = V(\thalf) =0$,
  $V'(0) > 0$ and $V'(\thalf) > 0$.
  We now bound the various integrals from below. For the first, we find
  \begin{align*}
    2 \int_{I_0} & \int_{I_0}
        V(\theta-\psi) (\theta-\bar{M}_0) f(t,\psi) f(t,\theta) \,d\psi\,d\theta \\
      &= \int_{I_0} \int_{I_0}
          V(\theta-\psi) (\theta-\bar{M}_0) f(t,\psi) f(t,\theta) \,d\psi\,d\theta
          + \int_{I_0} \int_{I_0}
              V(\psi-\theta) (\psi-\bar{M}_0) f(t,\psi) f(t,\theta) \,d\psi\,d\theta \\
      &= \int_{I_0} \int_{I_0}
          V(\theta-\psi) (\theta-\psi) f(t,\psi) f(t,\theta) \,d\psi\,d\theta \\
      &\ge b \int_{I_0} \int_{I_0}
              (\theta-\psi)^2 f(t,\psi) f(t,\theta) \,d\psi\,d\theta \\
      &= b \int_{I_0} \int_{I_0}
            \bigl((\theta-\bar{M}_0) - (\psi-\bar{M}_0)\bigr)^2
                f(t,\psi) f(t,\theta) \,d\psi\,d\theta \\
      &= 2 b \bigl(m_{2,0}(t) m_0 - M_0(t)^2\bigr) .
  \end{align*}
  In the same way, we find for the fourth integral that
  \[ \int_{I_1} \int_{I_1}
      V(\theta-\psi) (\theta-\bar{M}_1) f(t,\psi) f(t,\theta) \,d\psi\,d\theta
      \ge b \bigl(m_{2,1}(t) m_1 - M_1(t)^2\bigr) .
  \]
  The remaining two integrals are estimated together, as follows.
  \begin{align*}
    \int_{I_0} & \int_{I_1}
        V(\theta-\psi) (\theta-\bar{M}_0) f(t,\psi) f(t,\theta) \,d\psi\,d\theta
    + \int_{I_1} \int_{I_0}
          V(\theta-\psi) (\theta-\bar{M}_1) f(t,\psi) f(t,\theta) \,d\psi\,d\theta \\
      &= \int_{I_0} \int_{I_1}
             V(\theta-\psi) \bigl((\theta-\bar{M}_0)-(\psi-\bar{M}_1)\bigr)
                                  f(t,\psi) f(t,\theta) \,d\psi\,d\theta \\
      &\ge b \int_{I_0} \int_{I_1}
            \bigl((\theta-\bar{M}_0) - (\psi-\bar{M}_1)\bigr)^2
                f(t,\psi) f(t,\theta) \,d\psi\,d\theta \\
      &= b \bigl(m_{2,0}(t) m_1 - 2 M_0(t) M_1(t) + m_{2,1}(t) m_0\bigr) .
  \end{align*}
  Adding up, we find that (recalling that $M_0(t) + M_1(t) = 0$)
  \[ \frac{dm_2}{dt}(t)
       \le -2 b \bigl((m_{2,0}(t) + m_{2,1}(t))(m_0 + m_1)
                        - M_0(t)^2 - 2 M_0(t) M_1(t) - M_1(t)^2\bigr)
       = -2 b m_2(t) .
  \]
  This shows that $m_2(t) \le e^{-2b t} m_2(0)$, and since $m_2(t) \ge 0$, this
  implies $m_2(t) \to 0$ as $t \to \infty$. So the local second moments
  $m_{2,0}(t)$ and~$m_{2,1}(t)$ tend to zero as well.
  Using Lemma~\ref{convergence-peak} on the intervals \( I_0, I_1 \) separately,
  it follows that for \( t \to \infty \) the solution converges to two peaks,
  \[ f(t,\cdot) \stackrel{\D}{\to} m_0 \delta_{\bar{M}_0} + m_1 \delta_{\bar{M}_1} , \]
  where the distance between the peaks is $\bar{M}_0 - \bar{M}_1 = \thalf$.
\end{proof}


\section{Linear stability of peaks}

\subsection{Stability of position} \label{stability-of-position} \strut

We start this section by using the `peak ansatz' of Mogilner, Edelstein-Keshet
and Ermentrout~\cite{Mogilner2}.
The initial distribution is a sum of \( n \ge 2 \) peaks at positions
\( \theta_j(0) \in S^1 \)
and with masses \hbox{\( m_j > 0 \)} where \( \sum_{j=1}^n m_j = 1 \)
(different masses are a generalization of~\cite{Mogilner2}).
The solution keeps this form,
\( f(t,\theta) = \sum_{j=1}^n m_j \delta_{\theta_j(t)}(\theta) \),
and the positions \( \theta_j(t) \) satisfy the following system of ordinary differential equations.
\begin{equation} \label{ode}
  \frac{d \theta_j}{dt}(t)
     = - \sum_{k=0}^{n-1} m_k V\bigl(\theta_j(t) - \theta_k(t)\bigr)
  \quad \text{for $j=0,\ldots,n-1$.}
\end{equation}
To see this, we write
\( \delta_{\theta_j} = \delta_0(.-\theta_j) = \delta(.-\theta_j) \)
and plug \( f(t,\cdot) = \sum_j m_j \delta(.-\theta_j(t)) \) 
into the transport equation \( \partial_t f = \partial_\theta ((V*f) f) \).
For the left hand side we get
\begin{equation} \label{h1}
  \dfdt(t,\cdot)
     = \sum_{j=0}^{n-1}
           m_j \, \left(-\frac{d \theta_j}{dt}(t) \right) \, \delta'(.-\theta_j(t))\,,
\end{equation}
and for the right hand side
\begin{align}
  \partial_\theta \left((V*f(t,\cdot)) f(t,\cdot)\right)
    &= \partial_\theta \left(
         \sum_{j=0}^{n-1} \sum_{k=0}^{n-1} m_j m_k
              V\bigl(\theta_j(t)-\theta_k(t)\bigr) \, \delta(.-\theta_j(t)) \right)
                   \nonumber \\
    &= \sum_{j=0}^{n-1} \sum_{k=0}^{n-1} m_j m_k
         V\bigl(\theta_j(t)-\theta_k(t)\bigr) \, \delta'(.-\theta_j(t)).
       \label{h2}
\end{align}
Comparing \eqref{h1} and~\eqref{h2} we deduce~\eqref{ode}.

The case \( n=2 \) is interesting.
Since \( V(0) = 0 \) and \( V \) is odd, the system is
\begin{align*}
  \dot{\theta_0} &= - m_0 V(\theta_0-\theta_1) \\
  \dot{\theta_1} &= - m_1 V(\theta_1-\theta_0) = m_1 V(\theta_0-\theta_1)
\end{align*}
hence (recall that \( m_0 + m_1 = 1 \))
\begin{equation}
\label{ode2}
  \frac{d}{dt}(\theta_0 - \theta_1) = - V(\theta_0 - \theta_1).
\end{equation}
Because \( \theta_j(t) \to \bar{\theta}_j \) implies
\( \delta_{\theta_j(t)} \stackrel{\D}{\to} \delta_{\bar{\theta}_j} \),
we may conclude the following.

\begin{example} \label{two-peaks}
  Let \( 0 \le \theta_v \le \half \) and \( V \in \C^{1}(S^1) \) odd with
  \( V > 0 \) on \( \left]0,\theta_v\right[\ \)
  and \( V < 0 \) on \(\left]\theta_v,\thalf\right[ \).
  If \( \dist(\theta_0(0), \theta_1(0)) < \theta_v \),
  then \( \theta_0(t)-\theta_1(t) \to 0 \) for \( t \to \infty \),
  i.e., the solution of~\eqref{ode} converges to a single peak;
  if \( \dist(\theta_0(0), \theta_1(0)) > \theta_v \),
  then \(  \dist(\theta_0(t), \theta_1(t)) \to \thalf \),
  hence the solution of~\eqref{ode} converges to two opposite peaks.
  \( \dist(\theta_0, \theta_1) = \theta_v \) is an unstable stationary solution.
\end{example}

Now we are in a good position to show that one gets into trouble when defining
a `first moment' in the `obvious' naive way by
\( \int_{-\half}^{\half} p^*(f)(t,x) x \,dx \).
The point is that this is (in general) {\em not} time-invariant.

\begin{example} \label{Ex:moment}
  Let \( 0 < \eps < \tfrac{1}{8} \),
  \( V \) odd with \( V(\theta) = \theta \) on \( [0,4 \eps] \) and
  \[ f(0,\cdot) = \thalf \delta_{\theta_0(0)} + \thalf \delta_{\theta_1(0)}
    \quad \text{where} \quad
    \theta_0(0) = p(-\thalf+\eps) \quad\text{and}\quad
    \theta_1(0) = p(\thalf-3 \eps)  . \]
  Then
  \[ f(t,\cdot) \to \thalf \:
            \big( \delta(.-(-\thalf-\eps)) + \delta(.-(\thalf-\eps)) \big)
            \stackrel{S^1}{=} \delta(.-(\thalf-\eps))
    \quad \text{as $t \to \infty$.} \]

  To see this, note that \\
  i) \( \frac{d \theta_0}{dt}(0) = -\thalf V(\theta_0(0)-\theta_1(0))
          = -\thalf V(4 \eps) < 0 \)
  and \( \frac{d \theta_1}{dt}(0) > 0 \);
  hence, \( \dist(\theta_0(t),\theta_1(t)) \) is decreasing in \( t = 0 \);

  ii) \( \frac{d}{dt} (1+\theta_0(t) - \theta_1(t))
        \stackrel{\eqref{ode2}}{=} - V(1+\theta_0(t) - \theta_1(t))
        = - (1+\theta_0(t) - \theta_1(t)) \)
  as long as \( \dist(\theta_0(t),\theta_1(t)) \le 4 \eps \).\\
  Since \( \dist(\theta_0(0),\theta_1(0)) = 4 \eps \), i) and ii) imply that
  \( \dist(\theta_0(t),\theta_1(t)) = 1 + \theta_0(t) - \theta_1(t) \to 0 \)
  as \( t \to \infty \);

  iii) \( \frac{d}{dt} (\theta_0(t) + \theta_1(t)) \stackrel{\eqref{ode}}{=} 0 \),
  therefore, \( \theta_0(t) + \theta_1(t) = -2 \eps \pmod{1} \) for all \( t \ge 0 \).
  These facts imply that \( \theta_0(t) \to -\half - \eps \)
  and \( \theta_1(t) \to \half - \eps \).

  The `first moment' of the initial distribution is
  \( \int_{-\half}^{\half} p^*(f)(0,x) x \, dx = - \eps \).
  The `first moment' of the limit is
  \( \int_{-\half}^{\half} \delta(x-(\thalf - \eps)) x \,dx = \thalf - \eps \).
  Therefore, this `first moment' is not invariant.
  (In fact, it jumps by $\half$ when one of the two peaks moves through the
  point $p(\half) \in S^1$.)
\end{example}

We will now analyze the {\em local} stability of two selected stationary solutions,
namely peaks in {\em one} place, i.e., \( \theta_j = \theta_0 \) for all
\( 0 \le j < n \), and peaks with {\em equal} masses at {\em equal distances},
i.e., \(m_j = \tfrac{1}{n} \) and \( \theta_0 \in S^1 \),
\( \theta_j = \theta_{j-1} + \tfrac{1}{n} \) for \( 1 \le j < n \).
Obviously, both are stationary solutions of equation~\eqref{ode}.
The matrix of the linearization is
\begin{equation}
   A = \left(
   \begin{array}{cccc}
     -\sum\limits_{k=0,k \neq 0}^{n-1} m_k V'(\theta_0-\theta_k) &
          m_1 V'(\theta_0-\theta_1) & \ldots &
          m_n V'(\theta_0-\theta_{n-1}) \\
     m_0 V'(\theta_1-\theta_0) &
     -\sum\limits_{k=0,k\neq 1}^{n-1} m_k V'(\theta_1-\theta_k) &
          \ldots & m_{n-1} V'(\theta_1-\theta_{n-1}) \\
     \vdots && \ddots& \\
     m_0 V'(\theta_{n-1}-\theta_0) &
     m_1 V'(\theta_{n-1}-\theta_1) & \ldots &
     -\sum\limits_{k=0,k \neq n-1}^{n-1} m_k V'(\theta_{n-1}-\theta_k)
   \end{array} 
   \right).
\label{permutation-matrix}
\end{equation}

In both cases \( A \) has a clear structure such that the eigenvalues can be
calculated explicitly (remember \( \sum m_k = 1 \) for the first case;
if \( \theta_j - \theta_{j+1} = \tfrac{1}{n} \) and \( m_j = \tfrac{1}{n} \),
then \( A \) is a symmetric and cyclic matrix, because \( V' \) is even).
The eigenvalues are
\[ 
  \lambda_j
  = \left\{
     \begin{array}{c@{\quad \text{if} \quad }l}
          0 & j = 0  \\
          -V'(0) &
          1 \le j < n, \text{\ assuming that $\theta_k = \theta_0$ for all~$k$} \\
       \frac{1}{n} \sum_{k=1}^{n-1} \, V'(\frac{k}{n}) (-1 + \cos(2 \pi \frac{j k}{n}))
          & 1 \le j < n, \text{\ assuming that $\theta_k = \theta_0 + \frac{k}{n}$
                                for all~$k$}
      \end{array}
      \right.
\] 
One eigenvalue is zero, because of the translational invariance of the system.
Note that \( \lambda_j = \lambda_{n-j} \).

\begin{theorem} \label{stab-n-peaks} \strut
  \renewcommand{\theenumi}{\alph{enumi}}
  \renewcommand{\labelenumi}{\emph{\theenumi)}}
  \vspace{-2ex}
  \begin{enumerate}\setlength{\parindent}{0mm}
  \item
    A single peak is stable {\em up to translation} in the space of peak solutions
    if and only if $V'(0)$ is positive.
  \item
    Let \( n \ge 2 \);
    \( n \) peaks with equal masses and equal distances are stable
    {\em up to translation}
    in the space of \(n \)-peak solutions with equal masses if \( \sum_{k=1}^{n-1} \,
    V'(\frac{k}{n}) (-1 + \cos(2 \pi \frac{j k}{n})) < 0 \) \((*)\)
    for all \( 1 \le j \le n-1 \).

    For \( n=2,3 \) a necessary and sufficient condition for \((*)\) to hold
    is \( V'(\frac{j}{n}) > 0 \) for all \( 1 \le j \le n-1 \); the condition is
    sufficient for all $n \ge 2$.
  \end{enumerate}
\end{theorem}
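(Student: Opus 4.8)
The plan is to read linear stability straight off the eigenvalues of the Jacobian $A$ in~\eqref{permutation-matrix}, which are recorded in the display preceding the theorem, and then to interpret those eigenvalues correctly. First I would make precise what ``stability up to translation'' means here: the $n$-peak configurations with fixed masses form a manifold parametrised by $(\theta_0,\dots,\theta_{n-1})$, on which the diagonal $S^1$-action (simultaneous shifting of all peaks) sweeps out the one-parameter family of equilibria of~\eqref{ode}. Differentiating this action shows that $\mathbf{1}=(1,\dots,1)^{\top}$ is always an eigenvector of $A$ for the eigenvalue $\lambda_0=0$; this is the unavoidable neutral direction. Since in both cases $A$ will be diagonalisable with real spectrum, stability in the quotient by translations is equivalent to $\lambda_j<0$ for all $j\neq 0$, and a standard centre-manifold argument (the centre direction being exactly the translation orbit) upgrades this to convergence of nearby peak configurations to a translate of the equilibrium.

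For part~a), I would evaluate $A$ at the coincident configuration $\theta_k=\theta_0$. Using $\sum_k m_k=1$ one finds $A=V'(0)\,(\mathbf{1}\,m^{\top}-I)$, a rank-one perturbation of a multiple of the identity, where $m=(m_0,\dots,m_{n-1})^{\top}$. Its spectrum is therefore $\lambda_0=0$ on $\mathbf{1}$ and $-V'(0)$ with multiplicity $n-1$ on the hyperplane $\{x:m^{\top}x=0\}$, independently of the actual masses. Hence the configuration is stable up to translation exactly when $-V'(0)<0$, that is $V'(0)>0$, which yields the ``if and only if''.

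For part~b), with $m_j=\tfrac1n$ and $\theta_j-\theta_k=\tfrac{j-k}{n}$, the matrix $A$ is circulant, and symmetric because $V'$ is even. I would diagonalise it by the discrete Fourier transform: the eigenvalue on the $j$-th Fourier mode, with $\omega=e^{2\pi i/n}$, is $\lambda_j=\tfrac1n\sum_{k=1}^{n-1}V'(\tfrac{k}{n})(\omega^{jk}-1)$, and pairing $k$ with $n-k$ (again using that $V'$ is even) cancels the imaginary parts and reproduces the stated real eigenvalues $\lambda_j=\tfrac1n\sum_{k=1}^{n-1}V'(\tfrac{k}{n})(-1+\cos(2\pi\tfrac{jk}{n}))$. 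Thus stability up to translation is precisely condition~$(*)$, namely $\lambda_j<0$ for $1\le j\le n-1$.

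It then remains to analyse~$(*)$. Sufficiency of $V'(\tfrac{k}{n})>0$ for all $k$ is immediate: each summand $V'(\tfrac{k}{n})(\cos(2\pi\tfrac{jk}{n})-1)$ is $\le 0$, and for $1\le j\le n-1$ the $k=1$ term is strictly negative since $\cos(2\pi\tfrac{j}{n})<1$, so $\lambda_j<0$. For necessity when $n=2,3$ I would just evaluate $\lambda_1$ directly: for $n=2$ one gets $\lambda_1=-V'(\tfrac12)$, and for $n=3$, using $\cos\tfrac{2\pi}{3}=\cos\tfrac{4\pi}{3}=-\tfrac12$ together with $V'(\tfrac13)=V'(\tfrac23)$, one gets $\lambda_1=-V'(\tfrac13)$; in each case $(*)$ forces the relevant derivative to be positive, and $\lambda_1=\lambda_{n-1}$ covers the remaining index. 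The main obstacle is conceptual rather than computational: cleanly isolating the harmless translational zero eigenvalue and justifying that negativity of the remaining spectrum gives genuine stability modulo translation. The small-$n$ reductions also make transparent why the clean equivalence breaks down for $n\ge 4$: there the factors $\cos(2\pi\tfrac{jk}{n})-1$ vary in magnitude across $k$, so $(*)$ can hold with some $V'(\tfrac{k}{n})\le 0$, and positivity of all the $V'(\tfrac{k}{n})$ is then only sufficient, not necessary.
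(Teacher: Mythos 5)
Your proposal is correct and follows essentially the same route as the paper: read off stability from the spectrum of the linearization matrix $A$ in~\eqref{permutation-matrix}, with the translation mode accounting for the obligatory zero eigenvalue, and then check $(*)$ directly for $n=2,3$. You additionally supply the derivations that the paper only asserts (the rank-one decomposition $A=V'(0)(\mathbf{1}m^{\top}-I)$ for coincident peaks and the circulant/DFT diagonalisation in the equidistant case, plus the centre-manifold justification of ``stability up to translation''), all of which check out.
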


\begin{proof}
  If \( V'(0)> 0 \) and \( V'(\frac{j}{n}) > 0 \), respectively, then all
  eigenvalues except \( \lambda_0 \) are negative;
  this implies stability up to translation. We now assume that $(*)$ holds.

  If \( n = 2 \), then
  $0 > \lambda_1 = \thalf V'(\thalf) (-1 + \cos(\pi)) = -V'(\thalf)$,
  so $V'(\half) > 0$.

  If \( n = 3 \), then
  \( 0 > \lambda_1
      = \tfrac{1}{3} ( V'(\tfrac{1}{3}) (-1 + \cos(\tfrac{2 \pi}{3}))
          + V'(\tfrac{2}{3}) (-1 + \cos(\tfrac{4 \pi}{3}))
      = \tfrac{2}{3} V'(\tfrac{1}{3}) (-1 + \cos(\tfrac{2 \pi}{3}))
  \),
  because \( V'(\tfrac{1}{3}) = V'(\tfrac{1}{3}-1) = V'(\tfrac{2}{3}) \),
  so $V'(\tfrac{1}{3}) = V'(\tfrac{2}{3}) > 0$.
\end{proof}

\begin{example}
  Primi et~al.~\cite{Primi} consider examples with
  \( V(\theta) = \sign(\alpha) \, \sin(2 \pi \theta + \alpha \, \sin(2 \pi \theta)) \)
  and find that four-peak like solutions are not stable if \( \alpha = \pm 1.2 \).
  For \( D = 0 \) this is explained now, because
  \[ V'(\tfrac{1}{4})
    = \sign(\alpha) \, \cos(2 \pi \tfrac{1}{4} + \alpha \, \sin(2 \pi \tfrac{1}{4}))
      \, (2 \pi + \alpha 2 \pi \cos(2 \pi \tfrac{1}{4}))
    = -2 \pi \, \sign(\alpha) \, \sin(\alpha)
    < 0
  \]
  for all \( 0 < |\alpha| < \pi \). Note that $V'(\tfrac{1}{4}) > 0$ is still
  necessary for $(*)$ to hold when $n = 4$.
\end{example}


\subsection{Stability with respect to small perturbations}
\label{small-perturbations} \strut

We consider the linear stability of the stationary solution $f(t,\cdot) = \delta$
in the space of \emph{differentiable measures} on~$S^1$, $\D^1(S^1)$. Recall that
this is the dual space of~$\C^1(S^1)$ and can be identified with the subspace
of distributions in~$\D(S^1)$ of order at most~1. Note that $\delta_\theta$ is
close to~$\delta$ in~$\D^1(S^1)$ when $\theta$ is small (since
$\langle \delta_\theta-\delta, h \rangle = \theta h'(\tilde\theta)$ for
some $\tilde\theta$ between $0$ and~$\theta$, so that
$\|\delta_\theta-\delta\|_{\D^1} \le |\theta|$).

We formulate a lemma that we will need later.

\begin{lemma} \label{L:switch}
  Let $L$ be a (time-independent) differential operator on~$S^1$, and
  let $\hat{L}$ be another differential operator on~$S^1$ such that
  \[ \langle Lf, h \rangle = \langle f, \hat{L}h \rangle \]
  for $f \in \D(S^1)$ and $h \in \C^\infty(S^1)$. Let $f(t,\cdot) \in \D(S^1)$
  be a solution of the PDE $\partial_t f = L f$. Let $H(t,\cdot) \in \C^\infty(S^1)$
  be the solution of the PDE $\partial_t H = \hat{L} H$ such that $H(0,\cdot) = h$.
  Then $\langle f(t,\cdot), H(-t,\cdot) \rangle$ is constant. In particular,
  \[ \langle f(t,\cdot), h \rangle = \langle f(0,\cdot), H(t,\cdot) \rangle . \]
\end{lemma}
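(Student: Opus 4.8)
The plan is to show that the pairing $t \mapsto \langle f(t,\cdot), H(-t,\cdot)\rangle$ has vanishing time derivative, from which constancy follows, and then to specialize. First I would differentiate the pairing with respect to~$t$ using the product rule for the time-dependent pairing:
\[
  \frac{d}{dt} \langle f(t,\cdot), H(-t,\cdot) \rangle
    = \langle \partial_t f(t,\cdot), H(-t,\cdot) \rangle
        + \langle f(t,\cdot), \partial_t \bigl(H(-t,\cdot)\bigr) \rangle .
\]
For the first term I substitute the PDE $\partial_t f = Lf$ and then use the defining adjoint relation $\langle Lf, h\rangle = \langle f, \hat{L}h\rangle$ (valid since $H(-t,\cdot)\in\C^\infty(S^1)$), giving $\langle f(t,\cdot), \hat{L}H(-t,\cdot)\rangle$. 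For the second term I apply the chain rule, $\partial_t\bigl(H(-t,\cdot)\bigr) = -(\partial_t H)(-t,\cdot) = -\hat{L}H(-t,\cdot)$, using that $H$ solves $\partial_t H = \hat{L}H$. The two terms then cancel and the derivative is zero.

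Having established that the pairing is constant in~$t$, I would evaluate it at $t=0$ and at a general value to extract the stated identity. Writing $g(t) = \langle f(t,\cdot), H(-t,\cdot)\rangle$, constancy gives $g(t) = g(0) = \langle f(0,\cdot), H(0,\cdot)\rangle = \langle f(0,\cdot), h\rangle$, using $H(0,\cdot)=h$. To recover the displayed equation $\langle f(t,\cdot), h\rangle = \langle f(0,\cdot), H(t,\cdot)\rangle$, I would instead consider the shifted pairing $\langle f(s+t,\cdot), H(-s,\cdot)\rangle$, which is likewise constant in~$s$ by the same computation (since $f(s+t,\cdot)$ again solves $\partial_s f = Lf$); evaluating at $s=0$ and at $s=t$ yields $\langle f(t,\cdot), h\rangle = \langle f(0,\cdot), H(t,\cdot)\rangle$.

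The main obstacle I anticipate is justifying the differentiation under the pairing and the interchange of $\partial_t$ with the duality bracket, since $f(t,\cdot)$ is only a distribution in $\D(S^1)$ while $H(-t,\cdot)$ is a smooth function whose time dependence must be differentiated in the $\C^\infty$ topology. Concretely, one needs that $t\mapsto H(-t,\cdot)$ is differentiable as a curve in $\C^\infty(S^1)$ (so that pairing a fixed distribution against it is differentiable with the expected derivative) and that $\partial_t f(t,\cdot)$ exists in $\D(S^1)$ with $\langle \partial_t f, h\rangle = \frac{d}{dt}\langle f, h\rangle$ for fixed test functions. Both are standard regularity facts for solutions of the respective linear evolution equations, but they are where the real content of rigor lies; the algebraic cancellation itself is immediate once differentiation is licensed. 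I would state these smoothness properties as the hypotheses implicitly carried by ``$f(t,\cdot)$ is a solution'' and ``$H(t,\cdot)$ is the solution,'' and invoke them rather than re-derive them.
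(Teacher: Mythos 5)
Your proposal is correct and follows essentially the same route as the paper: differentiate the pairing, cancel the two terms via the adjoint relation and the chain rule, then deduce the displayed identity by a time shift (the paper re-initializes $H$ at $H(t,\cdot)$, you shift $f$ instead; both rest on the equations being autonomous). One small slip: the shifted pairing $\langle f(s+t,\cdot), H(-s,\cdot)\rangle$ must be evaluated at $s=0$ and $s=-t$ (not $s=t$) to yield $\langle f(t,\cdot), h\rangle = \langle f(0,\cdot), H(t,\cdot)\rangle$.
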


\begin{proof}
  We have
  \begin{align*}
    \frac{d}{dt} \langle f(t,\cdot), H(-t,\cdot) \rangle
      &= \langle \partial_t f(t,\cdot), H(-t,\cdot) \rangle
          + \langle f(t,\cdot), -(\partial_t H)(-t,\cdot) \rangle \\
      &= \langle Lf(t,\cdot), H(-t,\cdot) \rangle
          + \langle f(t,\cdot), -\hat{L}H(-t,\cdot) \rangle \\
      &= \langle f(t,\cdot), \hat{L}H(-t,\cdot) - \hat{L}H(-t,\cdot) \rangle
       = 0 .
  \end{align*}
  Applying this with $\tilde{h} = H(t, \cdot)$ instead of~$h$ to obtain
  $\tilde{H}$, we have $\tilde{H}(-t, \cdot) = h$ and
  \[ \langle f(t,\cdot), h \rangle
       = \langle f(t,\cdot), \tilde{H}(-t, \cdot) \rangle
       = \langle f(0,\cdot), \tilde{H}(0, \cdot) \rangle
       = \langle f(0,\cdot), H(t, \cdot) \rangle . \qedhere
  \]
\end{proof}

Since the solution space of our equation is invariant with respect to translations,
no stationary solution can be absolutely linearly stable. In order to deal
with this technical problem, we will consider perturbations that do not change
the barycenter.

In the following, we will always consider equation~\eqref{tp} with $D = 0$.
If we set $f(t,\cdot) = \delta + \tilde{f}(t,\cdot)$ and linearize, we obtain
the linear PDE
\begin{equation} \label{E:lin}
  \frac{\partial\!\tilde{f}}{\partial t}(t,\theta)
    = \frac{\partial}{\partial\theta}
        \bigl(V \tilde{f}(t,\cdot) 
          + (V * \tilde{f}(t,\cdot)) \delta\bigr)(\theta)
  \,.
\end{equation}

\begin{theorem}[Linear stability of a single peak] \label{theorem-linstab}
  Let $V \in \C^1(S^1)$ be odd and such that $V > 0$ on $\left]0,\half\right[$
  and $V'(0) > 0$. Assume that $\tilde{f}(t,\cdot) \in \D^1(S^1)$ is a solution
  of equation~\eqref{E:lin} such that $\supp \tilde{f}(0,\cdot) \subset I$
  with a closed interval $0 \in I \subset S^1$ with $p(\half) \notin I$.
  We can lift $I$ uniquely to an interval $I' \subset \R$ with $0 \in I'$
  and $p(I') = I$. We assume that
  $\langle \tilde{f}(0,\cdot), 1 \rangle = \langle \tilde{f}(0,\cdot), \ell \rangle = 0$
  where $\ell$ is a function on~$S^1$ that satisfies $\ell(p(x)) = x$ for $x \in I'$.
  Then $\tilde{f}(t,\cdot)$ converges to zero as $t \to \infty$ in~$\D^1(S^1)$.
\end{theorem}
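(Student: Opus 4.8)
The plan is to use the duality Lemma~\ref{L:switch} to turn this statement about the distribution $\tilde{f}(t,\cdot)$ into a question about a flow on test functions, where an explicit analysis by characteristics becomes possible. Writing the right-hand side of~\eqref{E:lin} as $Lf$, a short computation using $V(0)=0$ and the oddness of $V$ (so that $(V*f)(0)=-\langle f,V\rangle$) gives, for $h\in\C^\infty(S^1)$,
\[
  \langle L f, h \rangle
    = -\langle f, V h' \rangle + \langle f, V \rangle\, h'(0)
    = \langle f, \hat{L} h \rangle, \qquad
  \hat{L} h = V \cdot (h'(0) - h') .
\]
Thus the adjoint flow $H(t,\cdot)$ with $H(0,\cdot)=h$ solves $\partial_t H = V(\theta)\,(H'(t,0)-\partial_\theta H)$, and Lemma~\ref{L:switch} yields $\langle \tilde f(t,\cdot),h\rangle=\langle\tilde f(0,\cdot),H(t,\cdot)\rangle$. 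Since $\tilde f(0,\cdot)$ is supported in~$I$, it therefore suffices to understand $H(t,\cdot)$ on a neighbourhood of~$I$ as $t\to\infty$.

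The decisive simplification is that the nonlocal coefficient $H'(t,0)$ is in fact constant in time. Differentiating the adjoint equation in~$\theta$ and setting $\theta=0$, the terms carrying $V(0)=0$ drop out while the factor $V'(0)$ multiplies $H'(t,0)-H'(t,0)=0$; hence $\partial_t\partial_\theta H(t,0)=0$ and $H'(t,0)=h'(0)$ for all~$t$. The adjoint equation then becomes the genuine transport equation
\[
  \partial_t H + V(\theta)\,\partial_\theta H = h'(0)\,V(\theta).
\]
Let $\varphi_t$ be the flow of $\dot\theta=V(\theta)$. Because $V$ is odd, $V(0)=V(\half)=0$, $V'(0)>0$ and $V>0$ on $\left]0,\half\right[$, the point $0$ is a repeller and $\half$ an attractor; in particular the backward orbit $\varphi_{-t}(\theta)$ of any $\theta\in I\setminus\{0\}$ converges to~$0$ and stays inside the lift~$I'\subset\left]-\half,\half\right[$. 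Solving along characteristics, on~$I'$,
\[
  H(t,\theta) = h\bigl(\varphi_{-t}(\theta)\bigr)
                 + h'(0)\,\bigl(\theta - \varphi_{-t}(\theta)\bigr).
\]

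As $t\to\infty$ we have $\varphi_{-t}(\theta)\to 0$, so $H(t,\theta)\to h(0)+h'(0)\,\ell(\theta)$ on~$I$, and I would upgrade this to $\C^1$-convergence on a slightly larger interval $J\supset I$ with $p(\half)\notin J$. Indeed, differentiating the characteristic formula gives
\[
  \partial_\theta H(t,\theta)
    = h'(0) + \bigl(h'(\varphi_{-t}(\theta)) - h'(0)\bigr)\,
         \partial_\theta\varphi_{-t}(\theta),
\]
and since $V'(0)>0$ the backward flow contracts exponentially near~$0$, so $\partial_\theta\varphi_{-t}(\theta)=\exp\bigl(-\int_0^t V'(\varphi_{-s}(\theta))\,ds\bigr)\to 0$ uniformly on~$J$; hence $\partial_\theta H(t,\cdot)\to h'(0)$ uniformly and $H(t,\cdot)\to h(0)+h'(0)\,\ell$ in $\C^1(J)$. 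As $\tilde f(0,\cdot)$ has order $\le 1$, is supported in $I\subset J$, and annihilates both $1$ and~$\ell$ by hypothesis, a fixed cutoff $\chi=1$ near~$I$ with $\supp\chi\subset J$ gives
\[
  \langle \tilde f(t,\cdot), h \rangle
    = \bigl\langle \tilde f(0,\cdot),\, H(t,\cdot) - h(0) - h'(0)\,\ell \bigr\rangle
    \longrightarrow 0 .
\]
Since $h$ was arbitrary, $\tilde f(t,\cdot)\to 0$ in $\D^1(S^1)$.

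The main obstacle is rigor in applying the duality lemma: with $V$ only $\C^1$, the adjoint solution $H(t,\cdot)$ is in general only~$\C^1$, so $\hat L H$ need not be~$\C^1$ and the identity $\langle Lf,H\rangle=\langle f,\hat L H\rangle$ underlying Lemma~\ref{L:switch} must be justified. I would handle this either by approximating $V$ in~$\C^1$ by smooth odd functions with $V'(0)>0$ and the same sign on $I'\setminus\{0\}$ and passing to the limit, or by verifying directly from the explicit characteristic solution that $t\mapsto\langle\tilde f(t,\cdot),H(T-t,\cdot)\rangle$ is constant. The remaining points — that the backward orbits stay in~$I'$, uniformity of the contraction estimate, and the endpoint handling via the cutoff supported in~$J$ — are routine consequences of $V'(0)>0$ and $p(\half)\notin I$.
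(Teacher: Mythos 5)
Your proof is correct and follows essentially the same route as the paper's: pass to the adjoint flow via Lemma~\ref{L:switch}, observe that the nonlocal term is controlled (you show $H'(t,0)=h'(0)$ is constant and solve the resulting inhomogeneous transport equation by characteristics; the paper instead first subtracts multiples of $1$ and $\ell$ to reduce to $h(0)=h'(0)=0$ and uses the homogeneous equation), and then use the backward contraction toward $0$ coming from $V'(0)>0$ together with the two moment conditions on $\tilde f(0,\cdot)$. Your cutoff argument and the remark about the $\C^1$ regularity of $V$ are slightly more careful than the paper on technical points but do not change the substance.
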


\begin{proof}
  Let $h \in \C^1(S^1)$. We have to show that
  $\langle \tilde{f}(t,\cdot), h \rangle \to 0$ as $t \to \infty$. We have
  \begin{align*}
    \frac{d}{dt} \langle \tilde{f}(t,\cdot), h \rangle
      &= \Bigl\langle \frac{\partial\!\tilde{f}}{\partial t}(t,\cdot), h \Bigr\rangle \\
      &= \Bigl\langle \frac{\partial}{\partial\theta} \bigl(V \tilde{f}(t,\cdot)
                   + (V * \tilde{f}(t,\cdot)) \delta\bigr), h \Bigr\rangle \\
      &= \langle V \tilde{f}(t,\cdot) + (V * \tilde{f}(t,\cdot)) \delta, -h' \rangle \\
      &= -\langle \tilde{f}(t,\cdot), V h' \rangle
           - (V * \tilde{f}(t,\cdot))(0) h'(0) \\
      &= \langle \tilde{f}(t,\cdot), -V (h' - h'(0)) \rangle \,.
  \end{align*}
  (The last equality uses that $V$ is odd.)
  We note that if $h$ is constant, then $\langle \tilde{f}(t,\cdot), h \rangle$
  is constant in time and that if $h = c \ell$ on~$I$, then the same
  is true. Since
  $\langle\tilde{f}(0,\cdot), 1\rangle = \langle\tilde{f}(0,\cdot), \ell\rangle = 0$,
  $\langle \tilde{f}(t,\cdot), h \rangle = 0$ for such~$h$. We can therefore
  restrict to functions~$h$ satisfying $h(0) = h'(0) = 0$. Let $H(t,\cdot)$
  denote the (unique) solution of the initial value problem
  \[ \frac{\partial H}{\partial t} = - V \frac{\partial H}{\partial \theta}\,,
     \qquad H(0, \cdot) = h \,.
  \]
  Then we see by Lemma~\ref{L:switch} that
  $\langle \tilde{f}(t,\cdot), h \rangle
     = \langle \tilde{f}(0,\cdot), H(t,\cdot) \rangle$.
  (In particular, this shows that equation~\eqref{E:lin} has a unique solution
  in~$\D^1(S^1)$ under the given assumptions.) Let $\Phi : \R \times S^1 \to S^1$
  denote the flow associated to $V\!$, i.e.,
  \[ \frac{\partial\Phi}{\partial t}(t,\theta) = V\bigl(\Phi(t,\theta)\bigr) \,,
     \qquad \Phi(0,\theta) = \theta \,.
  \]
  Then $H(t,\Phi(t,\theta)) = h(\theta)$, as can be readily checked. Equivalently,
  $H(t,\theta) = h(\Phi(-t,\theta))$. Now we claim that $H(t,\cdot)|_I$ converges
  to zero in~$\C^1(I)$. For this, note first that for $\theta \in I$, we have
  $\Phi(-t,\theta) \to 0$ as $t \to \infty$ uniformly in~$\theta$ (this is because
  $p(\half)$ is the unique attracting and $p(0)$ the unique repelling fixed
  point of the flow~$\Phi$). So $\|H(t,\cdot)|_I\|_\infty \to |h(0)| = 0$.
  Next, we observe that
  \[ \frac{\partial}{\partial t} \frac{\partial}{\partial\theta} \Phi(-t,\theta)
       = -\frac{\partial}{\partial\theta} \frac{\partial\Phi}{\partial t}(-t,\theta)
       = -\frac{\partial}{\partial\theta} V\bigl(\Phi(-t,\theta)\bigr)
       = -V'\bigl(\Phi(-t,\theta)\bigr) \frac{\partial}{\partial\theta} \Phi(-t,\theta)
     \,.
  \]
  For large~$t$, $\Phi(-t,\theta)$ will be uniformly close to zero, so
  $-V'\bigl(\Phi(-t,\theta)\bigr)$ will be uniformly negative (recall
  that $V'(0) > 0$). This shows that $\frac{\partial}{\partial\theta} \Phi(-t,\theta)$
  tends to zero as $t \to \infty$, uniformly for $\theta \in I$. This in turn
  implies that
  \[ \Bigl|\frac{\partial H}{\partial\theta}(t, \theta)\Bigr|
       = \Bigl|h'\bigl(\Phi(-t,\theta)\bigr)
                \frac{\partial}{\partial\theta} \Phi(-t,\theta)\Bigr|
  \]
  also tends to zero uniformly on~$I$ as $t \to \infty$. So
  \[ \langle \tilde{f}(t,\cdot), h \rangle
       = \langle \tilde{f}(0,\cdot), H(t,\cdot) \rangle \to 0
       \qquad \text{as $t \to \infty$,}
  \]
  and this means that $\tilde{f}(t,\cdot) \to 0$ in~$\D^1(S^1)$.
  More precisely, it follows that
  $\supp \tilde{f}(t,\cdot) \subset \Phi(-t, I)$, so that the support is
  contracted to $\{0\}$, whereas mass and first moment are always zero.
\end{proof}

It is certainly natural to consider perturbations that do not change the total
mass (thinking of redistributing the mass on the circle). What about perturbations
that do not preserve the barycenter? Consider a small perturbation $g$
in~$\D^1(S^1)$ with mass zero and $\langle g, \ell \rangle = M$
with $|M| \ll 1$. Then $\delta + g = \delta_M + (\delta-\delta_M + g)$,
and $\delta-\delta_M + g$ is still a small perturbation, but now of~$\delta_M$.
Assuming that $p(\half) \notin I - M$, the theorem above then predicts convergence
to the shifted peak~$\delta_M$.

In a way, we can see this from the proof. If we do not assume that
$M = \langle \tilde{f}(0,\cdot), \ell \rangle = 0$, then (using test functions~$h$
with $h(0) = 0$, but not assuming $h'(0) = 0$) we find that
\[ \tilde{f}(t,\cdot) \stackrel{\D}{\to} -M \delta' . \]
This is in accordance with $\delta_M - \delta \approx -M \delta'$.

It is perhaps also interesting to compare Theorem~\ref{smallsupport} with
Theorem~\ref{theorem-linstab}. The former shows that an initial distribution
that is contained in an interval covering less than half of the circle will
converge to a delta peak under equation~\eqref{tp} without diffusion. The latter
shows that this peak is stable with respect to small perturbations that avoid an
arbitrarily small neighborhood of the point opposite to the location of the peak.

Proposition \ref{1/n-periodic} yields the following generalization to \( n \) equally distanced peaks with equal masses.

\begin{corollary}[Stability of \( n \) peaks with respect to
                  \( \tfrac{1}{n} \)-periodic perturbations]
  Let $n \ge 1$, let $V \in \C^1(S^1)$ be odd and such that $V_n > 0$ on
  $\left]0,\tfrac{1}{2n}\right[$
  and $V_n'(0) > 0$. Assume that $\tilde{f}(t,\cdot) \in \D^1(S^1)$ is an
  $\tfrac{1}{n}$-periodic solution
  of equation~\eqref{E:lin} such that
  $\supp \tilde{f}(0,\cdot) \subset \bigcup_{j=0}^{n-1} (I+\tfrac{j}{n})$
  with a closed interval $0 \in I \subset S^1$ with $p(\pm\tfrac{1}{2n}) \notin I$.
  We can lift $I$ uniquely to an interval $I' \subset \R$ with $0 \in I'$
  and $p(I') = I$. We assume that
  $\langle \tilde{f}(0,\cdot), 1 \rangle = \langle \tilde{f}(0,\cdot), \ell \rangle = 0$
  where $\ell$ is a function on~$S^1$ that satisfies $\ell(p(x)) = x$ for $x \in I'$
  and $\ell(\theta) = 0$ for $\theta \in \bigcup_{j=1}^{n-1} (I+\tfrac{j}{n})$.
  Then $\tilde{f}(t,\cdot)$ converges to zero as $t \to \infty$ in~$\D^1(S^1)$.
\end{corollary}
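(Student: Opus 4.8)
The plan is to reduce the assertion to the single-peak case treated in Theorem~\ref{theorem-linstab} by means of the $n$-fold covering $\theta \mapsto n\theta$ underlying Proposition~\ref{1/n-periodic}. Let $s_n : S^1 \to S^1$, $s_n(\theta) = n\theta$. To the given $\tfrac{1}{n}$-periodic perturbation $\tilde{f}(t,\cdot) \in \D^1(S^1)$ I associate the differentiable measure $\hat{g}(t,\cdot)$ defined by $\langle \hat{g}(t,\cdot), \hat{h}\rangle = \langle \tilde{f}(t,\cdot), \hat{h}\circ s_n\rangle$ for $\hat{h}\in\C^1(S^1)$; since $\hat{h}\circ s_n$ is $\tfrac{1}{n}$-periodic this is exactly the covering correspondence $\hat{g}(t,n\theta)=\tilde{f}(t,\theta)$ of Proposition~\ref{1/n-periodic}, read off one fundamental domain. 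Under it the $n$-peak stationary solution $\frac{1}{n}\sum_{j=0}^{n-1}\delta_{j/n}$ maps to the single peak $\delta = \delta_0$, and the linearization of equation~\eqref{tp} (with $D=0$) around the $n$-peak solution, restricted to $\tfrac{1}{n}$-periodic perturbations, is carried — exactly as in Proposition~\ref{1/n-periodic}, now applied to the linearized flow, with $n^2 D = 0$ — to equation~\eqref{E:lin} with $V$ replaced by $\tilde{V}_n$. Thus $\hat{g}$ solves~\eqref{E:lin} for the interaction function $\tilde{V}_n$.

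Next I would verify that Theorem~\ref{theorem-linstab} applies to $\hat{g}$ with $\tilde{V}_n$ in the role of $V$. Since $V\in\C^1(S^1)$ is odd, so is $\tilde{V}_n$. Using the identity $\tilde{V}_n(\theta) = V_n(\theta/n)$, which is immediate from the definitions, the hypothesis $V_n > 0$ on $]0,\frac{1}{2n}[$ gives $\tilde{V}_n > 0$ on $]0,\half[$, and $\tilde{V}_n'(0) = \frac{1}{n} V_n'(0) > 0$. For the support condition, $p(\pm\frac{1}{2n})\notin I$ forces the lift $I'\subset\,]-\frac{1}{2n},\frac{1}{2n}[$, so $J' = nI' \subset\,]-\half,\half[$ and $\supp\hat{g}(0,\cdot)\subset J := p(J')$ with $0\in J$ and $p(\half)\notin J$, precisely the geometric hypothesis of Theorem~\ref{theorem-linstab}. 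Finally the two vanishing conditions transfer: because $\ell$ is built to vanish on $\bigcup_{j=1}^{n-1}(I+\frac{j}{n})$ and to equal the central coordinate on $I$, the $\tfrac{1}{n}$-periodicity of $\tilde{f}(0,\cdot)$ makes $\langle\tilde{f}(0,\cdot),1\rangle$ and $\langle\tilde{f}(0,\cdot),\ell\rangle$ equal, up to harmless positive constants, to the mass and the first moment of $\hat{g}(0,\cdot)$ on $J$, so the latter vanish as required.

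Theorem~\ref{theorem-linstab} then yields $\hat{g}(t,\cdot)\to 0$ in $\D^1(S^1)$, and I would conclude by pulling this back. For an arbitrary test function $h\in\C^1(S^1)$, the $\tfrac{1}{n}$-periodicity of $\tilde{f}$ gives $\langle\tilde{f}(t,\cdot),A_n h\rangle = n\,\langle\tilde{f}(t,\cdot),h\rangle$, where $A_n h(\theta) = \sum_{j=0}^{n-1} h(\theta+\frac{j}{n})$ is $\tfrac{1}{n}$-periodic and $\C^1$. Writing $A_n h = \hat{h}\circ s_n$ with $\hat{h}\in\C^1(S^1)$, this means $\langle\tilde{f}(t,\cdot),h\rangle = \frac{1}{n}\langle\hat{g}(t,\cdot),\hat{h}\rangle \to 0$ as $t\to\infty$. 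Hence $\tilde{f}(t,\cdot)\to 0$ in $\D^1(S^1)$, as claimed.

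The main obstacle is the first step: Proposition~\ref{1/n-periodic} is stated for the \emph{nonlinear} equation and for \emph{functions}, whereas here I need its linearized analogue for \emph{differentiable measures}. So the delicate point is to check that the push-forward along $s_n$ genuinely intertwines the two linearized distributional equations and that it maps $\D^1(S^1)$ into itself continuously in both directions — the estimate $\|(\hat{h}\circ s_n)'\|_\infty = n\|\hat{h}'\|_\infty$ shows this, and it is what guarantees that $\D^1$-convergence of $\hat{g}$ pulls back to $\D^1$-convergence of $\tilde{f}$. Matching the awkwardly normalized moment condition on $\ell$ with the plain first moment on $J$ is the other point that requires care, but it reduces to the bookkeeping sketched above.
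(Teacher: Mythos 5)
Your proposal is correct and follows exactly the route the paper intends: the paper offers no proof beyond the remark that Proposition~\ref{1/n-periodic} together with Theorem~\ref{theorem-linstab} yields the corollary, and your push-forward along $\theta \mapsto n\theta$, the identity $\tilde{V}_n(\theta) = V_n(\theta/n)$, and the bookkeeping for the support and moment conditions are precisely the details that remark suppresses. The intertwining of the two linearized distributional equations that you flag as the delicate point does go through --- a short weak-form computation with test functions of the form $\hat{h}\circ s_n$, using $(V*\tilde{f}(t,\cdot))(\tfrac{j}{n}) = (V*\tilde{f}(t,\cdot))(0)$ by $\tfrac{1}{n}$-periodicity and $(\tilde{V}_n*\hat{g})(0) = n\,(V*\tilde{f})(0)$ --- so nothing is missing.
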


We now want to derive a result similar to Theorem~\ref{theorem-linstab}, but
for two opposite peaks of not necessarily equal mass. We take this stationary
solution to be $f_0 = m_- \delta_{-1/4} + m_+ \delta_{1/4}$. If we
take $f = f_0 + \tilde{f}$ in equation~\eqref{tp} with $D = 0$ and linearize,
we obtain
\begin{equation} \label{E:lintwo}
  \begin{array}{r@{}l}
    \dfrac{\partial\!\tilde{f}}{\partial t}(t,\theta)
      = \dfrac{\partial}{\partial\theta}
            \big(&(m_- V(\theta+\tfrac{1}{4}) + m_+ V(\theta-\tfrac{1}{4}))
                \tilde{f}(t,\theta) \\[2mm]
        &  \quad{} + m_- (V*\tilde{f}(t,\cdot))(-\tfrac{1}{4}) \delta_{-1/4}(\theta)
                   + m_+ (V*\tilde{f}(t,\cdot))(\tfrac{1}{4}) \delta_{1/4}(\theta)\big)
            .
  \end{array}
\end{equation}
We will define
\[ \tilde{V}(\theta) = m_- V(\theta+\tfrac{1}{4}) + m_+ V(\theta-\tfrac{1}{4}) . \]

\begin{theorem}[Linear stability of two peaks] \label{stab-2peaks}
  Let $V \in \C^1(S^1)$ be odd and such that $V'(0) > 0$ and $V'(\half) > 0$.
  With the notations $m_-$, $m_+$ and~$\tilde{V}$ from above, suppose that
  $\tilde{V}$ has exactly four zeros on~$S^1\!\!$, namely $-\tfrac{1}{4}$, $\theta_0$,
  $\tfrac{1}{4}$ and~$\theta_1$ (in counter-clockwise order).
  Assume that $\tilde{f}(t,\cdot) \in \D^1(S^1)$ is a solution
  of equation~\eqref{E:lintwo} such that $\supp \tilde{f}(0,\cdot) \subset I_- \cup I_+$
  with closed intervals $\pm\tfrac{1}{4} \in I_{\pm} \subset S^1$
  such that $I_- \subset p\left(\left]\theta_1-1,\theta_0\right[\right)$
  and $I_+ \subset p\left(\left]\theta_0,\theta_1\right[\right)$.
  Let $\ell \in \C^\infty(S^1)$ be such that $\ell(\theta) = \theta-(\pm\tfrac{1}{4})$
  for $\theta \in I_{\pm}$. We assume that
  $m(I_+, \tilde{f}(0,\cdot)) = m(I_-, \tilde{f}(0,\cdot))
    = \langle \tilde{f}(0,\cdot), \ell \rangle = 0$.
  Then $\tilde{f}(t,\cdot)$ converges to zero as $t \to \infty$ in~$\D^1(S^1)$.
\end{theorem}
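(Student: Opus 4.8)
The plan is to mirror the proof of Theorem~\ref{theorem-linstab}, replacing the single field $V$ by $\tilde{V}$ and the single fixed point $0$ by the pair $\pm\tquarter$. First I would compute the operator $\hat{L}$ adjoint to the right-hand side of~\eqref{E:lintwo}: integrating the two $\delta$-terms against $h'$ and writing $(V*\tilde{f})(\pm\tquarter) = \langle \tilde{f}, V(\pm\tquarter-\cdot)\rangle$, one gets, for $h \in \C^1(S^1)$,
\[
  \frac{d}{dt}\langle \tilde{f}(t,\cdot), h \rangle = \langle \tilde{f}(t,\cdot), \hat{L} h \rangle, \qquad
  \hat{L} h = -\tilde{V} h' - m_- h'(-\tquarter)\,V(-\tquarter-\cdot) - m_+ h'(\tquarter)\,V(\tquarter-\cdot).
\]
Using that $V$ is odd with $V(0)=V(\thalf)=0$ (so $\tilde{V}(\pm\tquarter)=0$ and $\tilde{V}'(\pm\tquarter)>0$ by $V'(0),V'(\thalf)>0$), I would record that $\hat{L}h=0$ on $I_-\cup I_+$ both for $h$ that is locally constant near each $I_\pm$ and for $h=\ell$; the latter because when $h'\equiv1$ the two $V$-terms recombine, via oddness, into $+\tilde{V}$, cancelling $-\tilde{V}$. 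I would then show $\supp \tilde{f}(t,\cdot)\subset I_-\cup I_+$ for all $t$: the transport part $\partial_\theta(\tilde{V}\tilde{f})$ of~\eqref{E:lintwo} moves mass with velocity $-\tilde{V}$, for which $\pm\tquarter$ are attracting (as $-\tilde{V}'(\pm\tquarter)<0$) with $I_\pm$ lying in their basins, while the two $\delta$-contributions sit at $\pm\tquarter\in I_\pm$ and cannot enlarge the support. With this, pairing the supported $\tilde{f}(t,\cdot)$ against the test functions above shows that $m(I_\pm,\tilde{f}(t,\cdot))$ and $\langle \tilde{f}(t,\cdot),\ell\rangle$ are constant, hence stay $0$.

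Using these three conserved quantities I would reduce to a convenient class of test functions. Given $h$, subtract $a h_1 + b h_2 + c\ell$, where $h_1,h_2$ are bump-type functions, locally constant near the peaks with $h_1(-\tquarter)=h_2(\tquarter)=1$ and $h_1(\tquarter)=h_2(-\tquarter)=0$; taking $a=h(-\tquarter)$, $b=h(\tquarter)$ and $c=m_+h'(\tquarter)+m_-h'(-\tquarter)$ leaves a remainder $\tilde{h}$ with $\tilde{h}(\pm\tquarter)=0$ and $m_+\tilde{h}'(\tquarter)+m_-\tilde{h}'(-\tquarter)=0$. Since the subtracted part pairs to $0$ with $\tilde{f}(t,\cdot)$ for all $t$, it suffices to prove $\langle\tilde{f}(t,\cdot),\tilde{h}\rangle\to 0$. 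By Lemma~\ref{L:switch} this equals $\langle\tilde{f}(0,\cdot),\tilde{H}(t,\cdot)\rangle$, where $\tilde{H}$ solves $\partial_t \tilde{H}=\hat{L}\tilde{H}$ with $\tilde{H}(0,\cdot)=\tilde{h}$, so I must show $\tilde{H}(t,\cdot)|_{I_-\cup I_+}\to 0$ in $\C^1$.

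The key is to control the nonlocal part of $\hat{L}$ through $d_\pm(t)=\tilde{H}'(t,\pm\tquarter)$. Differentiating $\partial_t\tilde{H}=\hat{L}\tilde{H}$ in $\theta$ and evaluating at $\pm\tquarter$ (where $\tilde{V}=0$ kills the $\tilde{V}\tilde{H}''$ term) yields a closed linear system whose combinations $s=m_+d_++m_-d_-$ and $w=d_+-d_-$ satisfy $\dot{s}=0$ and $\dot{w}=-V'(\thalf)w$. As $s(0)=0$ by the reduction, $s\equiv0$, so $d_\pm(t)=\pm m_\mp\,w(0)e^{-V'(\thalf)t}$ and the forcing in $\partial_t\tilde{H}=-\tilde{V}\tilde{H}'-m_-d_-V(-\tquarter-\cdot)-m_+d_+V(\tquarter-\cdot)$ collapses to the separable, exponentially small term $-m_-m_+\,w(0)e^{-V'(\thalf)t}\,U$, where $U(\theta)=V(\tquarter-\theta)+V(\tquarter+\theta)$ vanishes at $\pm\tquarter$. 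Solving this forced transport along the flow $\tilde\Phi$ of $\tilde{V}$ gives $\tilde{H}(t,\theta)=\tilde{h}(\tilde\Phi(-t,\theta))-\int_0^t(\text{forcing})(\tau,\tilde\Phi(\tau-t,\theta))\,d\tau$. For $\theta\in I_\pm$ the backward orbit converges uniformly to $\pm\tquarter$ (attracting for the backward flow since $\tilde{V}'(\pm\tquarter)>0$, with $I_\pm$ in the basins $p(\left]\theta_0,\theta_1\right[)$ and $p(\left]\theta_1-1,\theta_0\right[)$), so the homogeneous term tends to $\tilde{h}(\pm\tquarter)=0$ and the forcing integral tends to $0$ by dominated convergence: its integrand is bounded by $Ce^{-V'(\thalf)\tau}$ uniformly and its pointwise limit is the forcing at the fixed point, namely $U(\pm\tquarter)=0$.

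The hard part will be the $\C^1$, rather than merely $\C^0$, statement. Differentiating the Duhamel formula produces the variational factor $\partial_\theta\tilde\Phi(\tau-t,\theta)$, which grows like $e^{\tilde{V}'(\pm\tquarter)(t-\tau)}$ near the peaks and must be balanced against the factor $e^{-V'(\thalf)\tau}$ from the forcing. Writing the derivative along characteristics as $\tilde{H}'(t,\theta)=\mu(t)^{-1}\bigl[\tilde{h}'(\tilde\Phi(-t,\theta))+\int_0^t\mu(\tau)F(\tau)\,d\tau\bigr]$ with integrating factor $\mu(\tau)=\exp\int_0^\tau \tilde{V}'(\tilde\Phi(u-t,\theta))\,du$, I would estimate the result by an integral of the form $\int_0^t e^{-\lambda(t-\tau)}Ce^{-V'(\thalf)\tau}\,d\tau$, which tends to $0$ for every sign of $\lambda-V'(\thalf)$; the homogeneous piece decays because $\mu(t)^{-1}=\partial_\theta\tilde\Phi(-t,\theta)\to0$ exactly as in Theorem~\ref{theorem-linstab}. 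Thus $\tilde{H}(t,\cdot)\to0$ in $\C^1(I_-\cup I_+)$, so $\langle\tilde{f}(0,\cdot),\tilde{H}(t,\cdot)\rangle\to0$ and $\tilde{f}(t,\cdot)\to0$ in $\D^1(S^1)$.
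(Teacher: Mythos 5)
Your proposal is correct and follows essentially the same route as the paper's proof: reduce to test functions with $h(\pm\tquarter)=0$ and $m_+h'(\tquarter)+m_-h'(-\tquarter)=0$ using the stationary functions (locally constant functions and $\ell$), dualize via Lemma~\ref{L:switch}, derive the closed ODE system for $H'(t,\pm\tquarter)$ whose conserved combination $m_+H'(t,\tquarter)+m_-H'(t,-\tquarter)$ vanishes while the difference decays like $e^{-V'(1/2)t}$, and then push $H(t,\cdot)\to 0$ in $\C^1(I_-\cup I_+)$ along the flow of~$\tilde V$. Your explicit Duhamel/integrating-factor estimate for the $\C^1$ convergence correctly fills in the step the paper dispatches with ``by arguments similar to those in the proof of Theorem~\ref{theorem-linstab}.''
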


Note that $\tilde{V}$ has to have at least four zeros, since $\tilde{V}'$ is
positive at the two zeros at $\pm\tfrac{1}{4}$.

Note also that because of the translational invariance, any two peaks with
distance \( \half \) are stable under the assumptions of Theorem~\ref{stab-2peaks}.

\begin{proof}
  We proceed in a similar way as in the proof of Theorem~\ref{theorem-linstab}.
  We find that
  \[ \frac{\partial}{\partial t} \langle \tilde{f}(t,\cdot), h \rangle
       = -\bigl\langle \tilde{f}(t,\cdot),
                     m_- V(\cdot+\tfrac{1}{4})(h'-h'(-\tfrac{1}{4}))
                      + m_+ V(\cdot-\tfrac{1}{4})(h'-h'(\tfrac{1}{4}))\bigr\rangle \,.
  \]
  So we let $H(t,\theta)$ be the solution of
  \[ \frac{\partial H}{\partial t}(t,\theta)
       = -m_- V(\theta+\tfrac{1}{4})
           \Bigl(\frac{\partial H}{\partial\theta}(t,\theta)
                  - \frac{\partial H}{\partial\theta}(t,-\tfrac{1}{4})\Bigr)
         - m_+ V(\theta-\tfrac{1}{4})
           \Bigl(\frac{\partial H}{\partial\theta}(t,\theta)
                  - \frac{\partial H}{\partial\theta}(t,\tfrac{1}{4})\Bigr)
  \]
  with $H(0,\cdot) = h$; then
  $\langle \tilde{f}(t,\cdot), h \rangle
      = \langle \tilde{f}(0,\cdot), H(t,\cdot) \rangle$
  by Lemma~\ref{L:switch}, and we have to figure out the longterm behavior
  of~$H$. We see that a function~$h$ that is constant separately on $I_-$
  and on~$I_+$ is a stationary solution on $I_- \cup I_+$ and that the same is true
  when $h$ is a multiple of~$\ell$. So we can assume that
  $h(\tfrac{1}{4}) = h(-\tfrac{1}{4})
     = m_+ h'(\tfrac{1}{4}) + m_- h'(-\tfrac{1}{4}) = 0$.
  We write $H'$ for $\frac{\partial H}{\partial\theta}$. Then we have that
  \begin{align*}
    \frac{d}{dt} H'(t, \tfrac{1}{4})
       &= -m_- V'(\thalf) \bigl(H'(t, \tfrac{1}{4}) - H'(t, -\tfrac{1}{4})\bigr)
       \intertext{and}
    \frac{d}{dt} H'(t, -\tfrac{1}{4})
       &= -m_+ V'(\thalf) \bigl(H'(t, -\tfrac{1}{4}) - H'(t, \tfrac{1}{4})\bigr) .
  \end{align*}
  This shows that
  $m_+ H'(t,\tfrac{1}{4}) + m_- H'(t,-\tfrac{1}{4}) = 0$ for all~$t$ and that
  $H'(t,\tfrac{1}{4}) - H'(t,-\tfrac{1}{4}) \to 0$ as $t \to \infty$ (recall
  that $V'(\thalf) > 0$). So $H'(t,\tfrac{1}{4}) \to 0$ and $H'(t,-\tfrac{1}{4}) \to 0$.
  By arguments similar to those in the proof of Theorem~\ref{theorem-linstab}
  (note that in the present situation, the flow associated to~$\tilde{V}$
  moves the values of~$h$ away
  from $\tfrac{1}{4}$ and~$-\tfrac{1}{4}$ and toward $\theta_0$ and~$\theta_1$),
  we then see that $H(t, \cdot) \to 0$ in~$\C^1(I_- \cup I_+)$ and therefore
  $\langle \tilde{f}(t,\cdot), h \rangle \to 0$ as $t \to \infty$. For a general
  test function~$h$, we then find that
  \[ H(t,\cdot) \to h(\tfrac{1}{4}) \chi_+ + h(-\tfrac{1}{4}) \chi_-
                    + (m_+ h'(\tfrac{1}{4}) + m_- h'(-\tfrac{1}{4})) \ell
         \quad\text{on $I_- \cup I_+$,}
  \]
  where $\chi_{\pm}$ is a function in~$\C^\infty(S^1)$ that takes the value~1
  on~$I_{\pm}$ and the value~0 on~$I_{\mp}$. This translates into
  \[ \tilde{f}(t,\cdot) \stackrel{\D}{\to}
       m(I_+,\tilde{f}(0,\cdot)) \delta_{1/4} + m(I_-,\tilde{f}(0,\cdot)) \delta_{-1/4}
         - \langle \tilde{f}(0,\cdot), \ell \rangle
              (m_+ \delta'_{1/4} + m_- \delta'_{-1/4})
       = 0 . \qedhere
  \]
\end{proof}

The need for the three assumptions
$m(I_+, \tilde{f}(0,\cdot)) = m(I_-, \tilde{f}(0,\cdot))
    = \langle \tilde{f}(0,\cdot), \ell \rangle = 0$
arises because two opposite peaks of arbitrary masses and arbitrary orientation
form a stationary solution. If we have a perturbation that violates these
assumptions (but does not change the total mass), say
\[ m(I_+, \tilde{f}(0,\cdot)) = \mu, \quad
   m(I_-, \tilde{f}(0,\cdot)) = -\mu \quad\text{and}\quad
   \langle \tilde{f}(0,\cdot), \ell \rangle = M ,
\]
then we can proceed as in the one-peak case. We adjust masses and orientation
to obtain
\[ (m_+ + \mu) \delta_{1/4+M} + (m_- - \mu) \delta_{-1/4+M} \]
as a stationary solution such that the resulting perturbation of this solution
satisfies the assumptions.

If there is some $0 < \theta_v < \thalf$ such that $V(\theta_v) = 0$ and
$V'(\theta_v) > 0$ (and $V'(0) > 0$, of course), then we expect two peaks
at a distance of~$\theta_v$ also to be a stable stationary solution,
up to a redistribution of mass between the two peaks
and reorientation that preserves the distance. This is indeed the case.

\begin{corollary} \label{twopeaks2}
  Let $V \in \C^1(S^1)$ be odd and such that $V'(0) > 0$, and assume that
  there is $0 < \theta_v < \thalf$ such that $V(\theta_v) = 0$ and
  $V'(\theta_v) > 0$. Let $m_{\pm} > 0$ with $m_+ + m_- = 1$, and consider
  the stationary solution $f_0 = m_+ \delta_{\theta_v/2} + m_- \delta_{-\theta_v/2}$
  of equation~\eqref{tp} with~$D = 0$. Let
  $\tilde{V}(\theta) = m_- V(\theta + \tfrac{\theta_v}{2})
                         + m_+ V(\theta - \tfrac{\theta_v}{2})$
  and suppose that $\tilde{V}$ has exactly four zeros on~$S^1\!\!$, namely
  $-\tfrac{\theta_v}{2}$, $\theta_0$,
  $\tfrac{\theta_v}{2}$ and~$\theta_1$ (in counter-clockwise order).
  Then $f_0$ is linearly stable with respect to perturbations satisfying the
  conditions in Theorem~\ref{stab-2peaks} (with $\pm\tfrac{1}{4}$ replaced
  by $\pm\tfrac{\theta_v}{2}$).
\end{corollary}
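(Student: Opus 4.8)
The plan is to follow the proof of Theorem~\ref{stab-2peaks} essentially verbatim, with the substitution $\pm\tfrac{1}{4}\to\pm\tfrac{\theta_v}{2}$ and $V'(\thalf)\to V'(\theta_v)$, after first checking that $f_0$ is genuinely a stationary solution. The latter follows from Proposition~\ref{statpeaks}(ii): the two peaks are at distance~$\theta_v$ and $V(\theta_v)=0$ by hypothesis. Linearizing equation~\eqref{tp} (with $D=0$) about $f_0 = m_+\delta_{\theta_v/2} + m_-\delta_{-\theta_v/2}$ produces exactly equation~\eqref{E:lintwo} with $\pm\tfrac{1}{4}$ replaced by $\pm\tfrac{\theta_v}{2}$ and with $\tilde{V}(\theta) = m_- V(\theta+\tfrac{\theta_v}{2}) + m_+ V(\theta-\tfrac{\theta_v}{2})$ as in the statement.

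First I would repeat the computation of $\frac{d}{dt}\langle\tilde{f}(t,\cdot),h\rangle$, obtain the transported test-function PDE for~$H$, and invoke Lemma~\ref{L:switch} to get $\langle\tilde{f}(t,\cdot),h\rangle = \langle\tilde{f}(0,\cdot),H(t,\cdot)\rangle$. As before, test functions that are constant separately on $I_-$ and~$I_+$, as well as multiples of~$\ell$, are stationary on $I_-\cup I_+$; together with the three vanishing conditions $m(I_\pm,\tilde{f}(0,\cdot)) = \langle\tilde{f}(0,\cdot),\ell\rangle = 0$ this lets me reduce to~$h$ with $h(\pm\tfrac{\theta_v}{2})=0$ and $m_+ h'(\tfrac{\theta_v}{2}) + m_- h'(-\tfrac{\theta_v}{2})=0$. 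The crucial step is the pair of ODEs for $H'=\partial_\theta H$ at the peak positions. Here I would use $V(\theta_v)=V(-\theta_v)=V(0)=0$ (the second from oddness) to kill the non-derivative terms, leaving
\[
  \tfrac{d}{dt}H'(t,\tfrac{\theta_v}{2})
    = -m_- V'(\theta_v)\bigl(H'(t,\tfrac{\theta_v}{2}) - H'(t,-\tfrac{\theta_v}{2})\bigr)
\]
and the symmetric equation at $-\tfrac{\theta_v}{2}$ (using $V'(-\theta_v)=V'(\theta_v)$ since $V'$ is even). Because $V'(\theta_v)>0$, the difference $H'(\tfrac{\theta_v}{2})-H'(-\tfrac{\theta_v}{2})$ decays exponentially while $m_+ H'(\tfrac{\theta_v}{2}) + m_- H'(-\tfrac{\theta_v}{2})$ stays zero, so both boundary derivatives tend to zero.

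To conclude $H(t,\cdot)\to 0$ in~$\C^1(I_-\cup I_+)$ I need the flow of~$\tilde{V}$ to contract $I_\pm$ onto the attracting zeros $\theta_0,\theta_1$. This is where I would verify $\tilde{V}'(\tfrac{\theta_v}{2}) = m_- V'(\theta_v) + m_+ V'(0) > 0$ and $\tilde{V}'(-\tfrac{\theta_v}{2}) = m_- V'(0) + m_+ V'(\theta_v) > 0$, so that $\pm\tfrac{\theta_v}{2}$ are repelling fixed points; since $\tilde{V}$ has by hypothesis exactly the four zeros $-\tfrac{\theta_v}{2},\theta_0,\tfrac{\theta_v}{2},\theta_1$, the remaining two must be attracting, and the flow moves values of~$h$ away from $\pm\tfrac{\theta_v}{2}$ toward $\theta_0,\theta_1$ exactly as in Theorem~\ref{theorem-linstab}. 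The convergence $\langle\tilde{f}(t,\cdot),h\rangle\to 0$ and the identification of the limit then go through unchanged. I do not expect a genuine obstacle: the argument is a transcription of the $\theta_v=\thalf$ case, and the only places that really used the opposite-peak geometry were the vanishing $V(\pm\thalf)=0$ and the sign $V'(\thalf)>0$, both of which are now supplied by $V(\theta_v)=0$, $V'(\theta_v)>0$ and the oddness of~$V$. The only mild care needed is bookkeeping of the substituted arguments and confirming the repelling/attracting structure of the four fixed points.
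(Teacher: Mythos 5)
Your proposal is correct and matches the paper's approach exactly: the paper's own proof consists of the single remark that the argument is virtually identical to that of Theorem~\ref{stab-2peaks} after replacing $\pm\tfrac{1}{4}$ by $\pm\tfrac{\theta_v}{2}$. You have in fact supplied more detail than the paper does (the verification that $V(\theta_v)=V(-\theta_v)=0$ and $V'(\pm\theta_v)=V'(\theta_v)>0$ make the boundary ODEs for $H'$ and the repelling/attracting fixed-point structure of~$\tilde{V}$ go through), all of which is accurate.
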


\begin{proof}
  The proof is virtually identical to the proof of Theorem~\ref{stab-2peaks},
  after replacing $\pm\tfrac{1}{4}$ by $\pm\tfrac{\theta_v}{2}$.
\end{proof}

We saw that single peaks are stable up to reorientation if \( V'(0) > 0 \).
Two opposite peaks are stable up to redistribution of mass and reorientation
preserving the distance under the assumptions of Theorem~\ref{stab-2peaks},
which include \( V'(0) > 0 \) and \( V'(\thalf) > 0 \).
Now we prove that \( n \ge 3 \) equal peaks at equal distances are stable
if \( V'(\tfrac{j}{n}) > 0 \) for \( 0 \le j \le n-1 \), up to ?.

\begin{theorem} \label{theorem-stab-n-peaks}
  Let $n \ge 3$. Let $V \in \C^1(S^1)$ be odd and such that $V'(\tfrac{j}{n}) > 0$
  for all $0 \le j < n$ and $V_n > 0$ on~$\left]0,\tfrac{1}{2n}\right[$.
  Let, for $0 \le j < n$, $I_j$ be a closed interval in~$S^1$ contained in
  $\left]\tfrac{j}{n}-\tfrac{1}{2n},\tfrac{j}{n}+\tfrac{1}{2n}\right[$,
  and let $\ell \in \C^\infty(S^1)$ be such that $\ell(\theta) = \theta-\tfrac{j}{n}$
  for $\theta \in I_j$, for all~$j$.
  Then
  \[ \text{the stationary solution} \qquad
     f_0 = \frac{1}{n} \sum_{j=0}^{n-1} \delta_{j/n}
     \qquad \text{of equation~\eqref{tp} with $D = 0$}
  \]
  is linearly stable with respect to perturbations $\tilde{f} \in \D^1(S^1)$
  such that
  \[ \supp \tilde{f} \subset \bigcup_{j=0}^{n-1} I_j, \qquad
     m(I_j, \tilde{f}) = 0 \quad \text{for all $0 \le j < n$, \quad and \quad}
     \langle \tilde{f}, \ell \rangle = 0 .
  \]
\end{theorem}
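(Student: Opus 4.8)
The plan is to follow the pattern of the proofs of Theorems~\ref{theorem-linstab} and~\ref{stab-2peaks}. Writing $f = f_0 + \tilde{f}$ and linearising equation~\eqref{tp} with $D = 0$, and using $V * f_0 = \tfrac{1}{n} V_n$ together with $(V * \tilde{f})\,f_0 = \tfrac{1}{n}\sum_k (V*\tilde{f})(\tfrac{k}{n})\,\delta_{k/n}$, the linearised equation reads $\partial_t \tilde f = L\tilde f$ with $L\tilde f = \partial_\theta\big(\tfrac1n V_n\tilde f + \tfrac1n\sum_k(V*\tilde f)(\tfrac kn)\delta_{k/n}\big)$. For a test function $h$ I would compute, as before and using that $V$ is odd, that $\langle L\tilde f, h\rangle = \langle\tilde f, \hat L h\rangle$ with
\[ \hat L h(\psi) = -\frac1n\sum_{k=0}^{n-1} V\Big(\psi-\frac kn\Big)\Big(h'(\psi)-h'\Big(\frac kn\Big)\Big). \]
Lemma~\ref{L:switch} then reduces everything to the long-term behaviour of the solution $H(t,\cdot)$ of $\partial_t H = \hat L H$, $H(0,\cdot)=h$, since $\langle\tilde f(t,\cdot),h\rangle = \langle\tilde f(0,\cdot),H(t,\cdot)\rangle$. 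One checks directly that every function constant on each $I_j$, as well as the function $\ell$, is annihilated by $\hat L$ on $\bigcup_j I_j$ (for $\ell$ this uses $\ell'(\tfrac kn) = 1$ for all $k$). Let $W$ be their span; by the hypotheses $m(I_j,\tilde f)=0$ and $\langle\tilde f,\ell\rangle = 0$, the distribution $\tilde f(0,\cdot)$ annihilates $W$. Hence it suffices to show that $H(t,\cdot)|_{\bigcup_j I_j}$ converges in $\C^1$ to some element of $W$.

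Next I would extract the decisive finite-dimensional system. Writing $u_j(t) = H'(t,\tfrac jn)$ and differentiating the evolution equation in $\psi$ at $\psi = \tfrac jn$, the second-derivative term carries the coefficient $V_n(\tfrac jn)=\sum_k V(\tfrac{j-k}n)=0$ and drops out, leaving the closed linear system
\[ \dot u_j = -\frac1n\sum_{k=0}^{n-1} V'\Big(\frac{j-k}n\Big)\big(u_j-u_k\big). \]
Because $V'$ is even this is precisely the linearised position system with matrix~\eqref{permutation-matrix} for equal masses $m_k = \tfrac1n$, whose eigenvalues are the $\lambda_j$ listed before Theorem~\ref{stab-n-peaks}. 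The assumption $V'(\tfrac jn)>0$ for all $j$ guarantees $\lambda_0 = 0$ (eigenvector $\mathbf 1$) and $\lambda_j<0$ for $1\le j\le n-1$, and symmetry of the matrix makes $\sum_j u_j$ the conserved projection onto~$\mathbf 1$. Consequently $u_j(t)\to\bar u := \tfrac1n\sum_j u_j(0) = \tfrac1n\sum_j h'(\tfrac jn)$ for every $j$, exponentially fast.

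Finally I would run the transport-and-flow argument exactly as at the end of the proof of Theorem~\ref{stab-2peaks}. On all of $S^1$ the function $H$ solves $\partial_t H = -\tfrac1n V_n\,\partial_\psi H + \tfrac1n\sum_k V(\cdot-\tfrac kn)u_k(t)$; subtracting the limiting element $w_\infty\in W$ with slope $\bar u$ at the peaks turns this into a transport equation for $G=H-w_\infty$ driven by the source $\tfrac1n\sum_k V(\cdot-\tfrac kn)(u_k(t)-\bar u)$, which decays exponentially. Since $V_n$ is $\tfrac1n$-periodic and odd with $V_n>0$ on $\left]0,\tfrac1{2n}\right[$ and $V_n'(\tfrac jn)=V_n'(0)>0$, each $\tfrac jn$ is a repelling fixed point of the flow of $\tfrac1n V_n$ while the endpoints $\tfrac{2j\pm1}{2n}$ are attracting; hence the backward flow contracts each $I_j$ to its centre $\tfrac jn$, and the damping $-\tfrac1n V_n'<0$ near $\tfrac jn$ forces $\partial_\psi G\to0$ uniformly on $\bigcup_j I_j$. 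Together with the exponentially small source and Duhamel's formula this yields $G(t,\cdot)\to$ (a function constant on each $I_j$) in $\C^1(\bigcup_j I_j)$, so $H(t,\cdot)|_{\bigcup_j I_j}\to w_\infty\in W$ in $\C^1$, and therefore $\langle\tilde f(t,\cdot),h\rangle = \langle\tilde f(0,\cdot),H(t,\cdot)\rangle\to\langle\tilde f(0,\cdot),w_\infty\rangle = 0$. The main obstacle is this last step: controlling the inhomogeneous transport equation in $\C^1$ along a flow that is expanding on $I_j$ in forward time; I expect the cleanest route is the Duhamel representation along backward characteristics, mirroring the two-peak computation, with the exponential decay of $u_k-\bar u$ doing the work.
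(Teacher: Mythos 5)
Your proposal follows the paper's proof essentially step for step: the same adjoint operator $\hat L$, the same use of Lemma~\ref{L:switch}, the same closed linear system for the derivatives $H'(t,\tfrac{j}{n})$ obtained because $V_n(\tfrac{j}{n})=0$, and the same identification of the limit of $H(t,\cdot)$ on $\bigcup_j I_j$ as an element of the span of the $\chi_j$ and~$\ell$, which $\tilde f(0,\cdot)$ annihilates by hypothesis. The only (harmless) deviation is that you deduce convergence of the $H'(t,\tfrac{j}{n})$ to their common average from the eigenvalues of the circulant matrix~\eqref{permutation-matrix}, whereas the paper uses the decreasing quantity $\sum_j H'(t,\tfrac{j}{n})^2$ together with conservation of $\sum_j H'(t,\tfrac{j}{n})$; both arguments use $V'(\tfrac{j}{n})>0$ in the same way and yield the same conclusion.
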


\begin{proof}
  The proof proceeds in a way analogous to the proofs of Theorems
  \ref{theorem-linstab} and~\ref{stab-2peaks}. The equation governing the
  development of~$H(t,\cdot)$ is
  (writing again $H'$ for $\frac{\partial H}{\partial\theta}$)
  \begin{equation} \label{E:Heqmany}
     \frac{\partial H}{\partial t}(t,\theta)
       = -\frac{1}{n} \sum_{j=0}^{n-1}
                  V(\theta-\tfrac{j}{n})\bigl(H'(t,\theta) - H'(t,\tfrac{j}{n})\bigr) .
  \end{equation}
  The flow associated to~$V_n = \sum_j V(\cdot - \tfrac{j}{n})$ moves away
  from the points $\tfrac{j}{n}$ toward the points $\tfrac{j}{n} \pm \tfrac{1}{2n}$.
  So for any test function~$h$ satisfying $h(\tfrac{j}{n}) = h'(\tfrac{j}{n}) = 0$
  for all~$j$, we find that $\langle \tilde{f}(t,\cdot), h \rangle \to 0$
  as $t \to \infty$ in the same way as before. For the derivatives $H'(t,\tfrac{j}{n})$
  we obtain the equation (using $V_n(\tfrac{j}{n}) = 0$)
  \[ \frac{d}{dt} H'\Bigl(t,\frac{j}{n}\Bigr)
      = -\frac{1}{n} \sum_{k=0}^{n-1} V'\Bigl(\frac{j-k}{n}\Bigr)
                \Bigl(H'\Bigl(t,\frac{j}{n}\Bigr) - H'\Bigl(t,\frac{k}{n}\Bigr)\Bigr) .
  \]
  This leads to
  \[ \frac{d}{dt} \sum_{j=0}^{n-1} H'\Bigl(t,\frac{j}{n}\Bigr)^2
       = -\frac{1}{2n} \sum_{j,k=0}^{n-1} V'\Bigl(\frac{j-k}{n}\Bigr)
                \Bigl(H'\Bigl(t,\frac{j}{n}\Bigr) - H'\Bigl(t,\frac{k}{n}\Bigr)\Bigr)^2
       \le 0 ,
  \]
  with equality only if all $H'(t,\tfrac{j}{n})$ are equal. On the other hand,
  one sees easily that $\sum_j H'(t,\tfrac{j}{n})$ is constant. Together, this
  implies that all $H'(t,\tfrac{j}{n})$ converge to the same value as $t \to \infty$.
  Since functions that are constant on each~$I_j$ and also~$\ell$ are stationary
  under equation~\eqref{E:Heqmany}, we get that
  \[ H(t, \cdot) \to \sum_{j=0}^{n-1} h\Bigl(\frac{j}{n}\Bigr) \chi_j
                       + \frac{1}{n} \sum_{j=0}^{n-1} h'\Bigl(\frac{j}{n}\Bigr) \ell
        \qquad \text{on $\displaystyle\bigcup_{j=0}^{n-1} I_j$,}
  \]
  where $\chi_j \in \C^\infty(S^1)$ is a function that takes the value~1 on~$I_j$
  and the value~0 on all~$I_k$ with $k \neq j$. In terms of~$\tilde{f}$, this reads
  \[ \tilde{f}(t,\cdot) \stackrel{\D}{\to}
       \sum_{j=0}^{n-1} m(I_j, \tilde{f}(0,\cdot)) \delta_{j/n}
         - \langle \tilde{f}(0,\cdot), \ell \rangle
             \frac{1}{n} \sum_{j=0}^{n-1} \delta'_{j/n}
         = 0 . \qedhere
  \]
\end{proof}

As before, if $M = \langle \tilde{f}, \ell \rangle \neq 0$, then we expect a
reorientation by~$M$ in the positive direction. It is less clear what happens
when mass is redistributed between the domains of attraction of the various peaks.
The proof above would suggest that we simply end up with equidistant peaks of
different masses, but this will in general no longer be a stationary solution.
If we consider the system of ODEs~\eqref{ode} for moving peaks, then we see
by the theorem on implicit functions
that there is a unique stationary solution up to translation
near~$f_0$ with peaks of prescribed slightly different masses if the matrix
in~\eqref{permutation-matrix} with $\theta_j = \tfrac{j}{n}$ and $m_j = \tfrac{1}{n}$
only has one vanishing eigenvalue --- it is the matrix obtained as the Jacobian
with respect to the~$\theta_j$ of the map
\[ (m_0,\ldots,m_{n-1};\theta_0,\ldots,\theta_{n-1})
     \longmapsto \Bigl(-\sum_{k=0}^{n-1} m_k V(\theta_j-\theta_k)\Bigr)_{0\le j<n} ,
\]
and the zero eigenvalue corresponds to an overall translation. This condition will be
satisfied when the positions of $n$~equidistant peaks of the same
mass are stable up to translation, since then all the relevant eigenvalues
are negative. Note that the condition on~$V$ in Theorem~\ref{theorem-stab-n-peaks}
is sufficient to ensure this is the case, compare Theorem~\ref{stab-n-peaks}.

In the special case that we have $V(\tfrac{j}{n}) = 0$ for all $0 \le j < n$
the stationary solution of the system~\eqref{ode} will consist of equidistant
peaks even when the masses are not equal. In this case, one can formulate
a variant of Theorem~\ref{theorem-stab-n-peaks} in analogy to Theorem~\ref{stab-2peaks}
that shows that $n$~equidistant peaks with different masses are linearly
stable with respect to perturbations respecting the mass distribution and
the overall orientation.


\section{Numerical algorithms and simulations}
\label{numerical-algorithms}

\subsection{Solving the transport-diffusion equation via the Fourier transformed system}
\strut

In Section~\ref{examples} we will calculate numerically solutions of the
transport-diffusion equation~\eqref{tp} for randomly chosen as well as pre-structured
initial distributions.

By using the Fourier transform we convert the partial differential equation into an
infinite (but discrete) system of ordinary differential equations; since large Fourier
coefficients of  a smooth function are small we can then restrict to a finite system
which can be solved very efficiently.

In the Section~\ref{S:eqncircle} we found that the Fourier transform of the
transport-diffusion equation~\eqref{tp} is given by (compare~\eqref{fou_k})
\begin{align}
  \dot{f}_k &= - (2 \pi)^2 k^2 D f_k + 2 \pi i k \sum_{l \in Z} V_l f_l f_{k-l}
                  \nonumber\\
            &= c_k f_k + 4 \pi k \sum_{l \in \Z, l \neq 0,k} v_l f_l f_{k-l}
                \quad \text{for $k \in \Z_{>0}$,}
               \label{fou-k2}
\end{align}
where the eigenvalues \( c_k \) of the system (see~\eqref{ev}) and \( v_k \in \R \)
are
\[ c_k = - (2 \pi)^2 k^2 D + 4 \pi k v_k
   \qquad \text{and} \qquad
   v_k = \int_{0}^{\half} V(\theta) \, \sin(2 \pi k \theta) \, d\theta .
\]
Mass conservation is reflected by \( \dot{f}_0 = 0 \); we put \( f_0 = 1 \).
Note that the number of positive eigenvalues is usually small (see the remarks
after Theorem~\ref{thm:stability}).

In order to avoid the necessity to use very small timesteps (\( k^2 \) is large
for higher modes) we multiply~ \eqref{fou-k2} by \( \exp(-c_k t) \)
and define \( g_k(t) = f_k(t) \exp(-c_k t) \).
Then we get
\begin{equation} \label{numerical}
  \dot{g_k}(t) = (\dot{f_k}(t) - c_k f_k(t)) e^{-c_k t}
               = 4 \pi k e^{-c_k t} \sum_{l\neq 0,k} v_l f_l(t) f_{k-l}(t) ,
\end{equation}
which we solve by a second-order scheme.

The number \( n \) of equations is adapted dynamically, in the following way.
We start with \( n \) Fourier coefficients of \( f \), assuming that higher modes
are zero;
we calculate the right hand side of~\eqref{numerical} for \( 1 \le k \le 2 n \)
and accept for \( 1 \le k \le n \) the resulting \( f_k(t + \Delta t) \) as the
new value for \( f_k \).
If for some \( \tilde{k} > n \) the slope of \( f_{\tilde{k}} \) is larger than
some (small) error bound,
then the number~\( n \) of equations is increased to \( \tilde{k} + 1 \).
The additionally needed Fourier coefficients \( f_k \) with
\( n < k \le \tilde{k} + 1 \) are initialized as zero.

A further advantage of this scheme is that higher
periodicity of an initial function is preserved.


\subsection{Solving the stationary equation via iteration} \label{SubS:iteration}
\strut

We also programmed the iteration scheme which Primi et~al.~\cite{Primi} used to prove existence of peak-like solutions.

We start with an arbitrary function \( f^{(0)} \) on~$S^1$ with given mass (usually~1).
E.g., \( f^{(0)} \) may be the solution of
\( D \frac{d f^{(0)}}{d \theta}(\theta) = -V(\theta) f^{(0)}(\theta) \),
which is expected to lie near the one-peak solution, if it exists
(see Primi et~al.~\cite{Primi};
\( V = V*\delta \approx V*f \) if \( f \) is one-peak like).

Then we iterate
\[ D \frac{d f^{(n+1)}}{d \theta}(\theta) = -(V*f^{(n)})(\theta)\,f^{(n+1)}(\theta)
   \quad \text{and require} \quad
     \int_{S^1} f^{(n+1)}(\theta) \,d\theta = \int_{S^1} f^{(0)}(\theta) \,d\theta ,
\]
so that \( f^{(n+1)} \) is a function on~$S^1$ with the same mass as \( f^{(0)} \).

{\em If} this sequence converges, then the limit is obviously a solution
of~\eqref{stateq}, i.e., it is a stationary solution of~\eqref{tp}.
Primi et~al.~\cite{Primi} give criteria for convergence; e.g., the assumptions
\( V'(0) > 0 \) and \( \int_0^{\theta} V(\psi) \, d\psi > 0 \) for
\( \theta \in \left]0,\half\right] \) imply the existence of one-peak like solutions
if \( D \) is small.

Our iteration program reliably finds stationary solutions with
$\frac{1}{n}$-periodicity if no stationary solutions with lower periodicity are present.
Otherwise, it is better to use \( \tilde{V}_n \) and $n^2 D$
instead of \( V \) and~$D$, compare Proposition~\ref{1/n-periodic}.
(Unfortunately, in our program numerical instabilities accumulate, so that
$\frac{1}{n}$-periodicity of \( f^{(0)} \) for \( n > 1 \) is not preserved
numerically, in contrast to the theoretical prediction.)


\subsection{Examples} \label{examples}
\strut

In the following examples the stationary solutions were calculated with both
algorithms (exceptions will be mentioned); their stability was checked with the
Fourier based system.

The first example is interesting because it shows a backward bifurcation and mixed
mode solutions.

\begin{example} \label{sin+sin2}
  Let \( D > 0 \), \( \gamma \ge 0 \) and define
  \[ V(\theta) = \sin(2 \pi \theta) + \gamma \, \sin(4 \pi \theta) . \]
  We use formula~\eqref{ev} for the eigenvalues and get
  \[ c_1 = -4 \pi^2 D
          + 4 \pi \int_0^{\half} ( \sin^2(2 \pi \theta)
          + \gamma \sin(4 \pi \theta) \sin(2 \pi \theta)) \, d\theta
        = \pi \, ( - 4 \pi D + 1 ) > 0
        \iff D < \frac{1}{4 \pi}
  \]
  and
  \[ c_2 = - 16 \pi^2 D + 8 \pi \int_0^{\half}
                                        (\sin(2 \pi \theta) \sin(4 \pi \theta)
          + \gamma \sin^2(4 \pi \theta))\, d\theta
        = 2 \pi (-8 \pi D + \gamma) > 0
        \iff D < \frac{\gamma}{8 \pi} .
  \]
  For \( (D,\gamma) \in \R^+ \times \R^+ \) all four combinations of \( c_1 <> 0 \),
  \( c_2 <> 0 \) are possible, see Figure~\ref{ev_diagramm}.
  All other eigenvalues are negative.

  \begin{figure}
  \unitlength4mm
  \begin{center}
  \begin{picture}(10,11)
      \thicklines
      \put(0,0){\vector(1,0){10}}    
      \put(10.1,-0.1){\(D\)}
      \put(0,0){\vector(0,1){10}}    
      \put(-0.5,10){\(\gamma\)}
      \linethickness{0.05mm}
      \put(0,2){\line(1,0){10}}  
      \put(10,1.7){\(\; \gamma = \half, V'(\half) = 0 \)}
      \linethickness{0.5mm}
      \put(0,0){\line(1,6){1.7}}    
      \put(0.5,-0.7){\( 1/(4\pi) \)}
      \put(1.7,9.2){\( c_2 = 0 \)}
      \put(1,0){\line(0,6){10}}  
      \put(1.1,0.7){\( c_1 = 0 \)}
      \put(1,6){\( \gamma = 2 \)}
      \put(4,5){\shortstack[c]{\( c_1 < 0 \)\\\( c_2 < 0 \)}}
      \put(-3,0.4){\shortstack[c]{\( c_1 > 0 \)\\\( c_2 < 0 \)}}
      \thinlines
      \put(-0.9,0.5){\vector(1,0){1.4}}
      \put(-3,5){\shortstack[c]{\( c_1 > 0 \)\\\( c_2 > 0 \)}}
      \put(-0.9,5){\vector(1,0){1.3}}
      \put(1,10.8){\( c_1 < 0,\, c_2 > 0 \)}
      \put(1.3,10.5){\vector(0,-1){1}}
      \put(6,2.8){\( V \) has single zero}
      \put(6,0.8){\( V > 0 \)}
  \end{picture}
  \end{center}
  \caption{\label{ev_diagramm}
  Regions of in/stability of first and second eigenvalue
  }
  \end{figure}
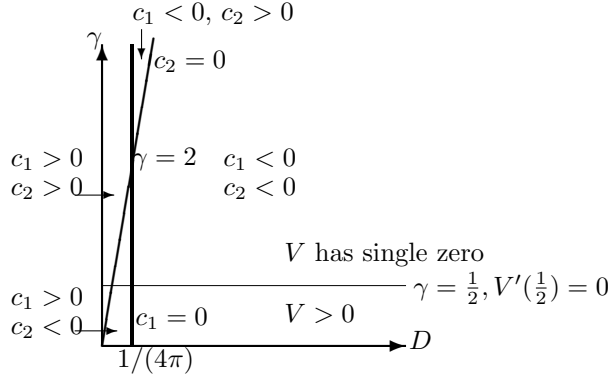

  For all parameter values we have \( V'(0) > 0 \) and
  \( \int_0^{\theta} V(\psi) \, d\psi > 0 \) for \( 0 < \theta < \thalf \);
  therefore for very small diffusion coefficient one-peak like solutions exist
  (Primi et~al.~\cite{Primi}) and at least for \( D = 0 \) they are stable
  by Theorem~\ref{theorem-linstab}.
  We find that \( V_2(\theta) = \gamma \sin(4 \pi \theta) \),
  therefore \( V_2'(0) > 0 \)
  and \( \int_0^{\theta} V_2(\psi) d\psi > 0 \) on \( \left]0,\frac{1}{4}\right[ \),
  thus two-peaks like solutions exist for small enough diffusion
  (Primi et~al.~\cite{Primi}). For \( \gamma < \half \) we have \( V'(0) > 0 \) and
  \( V'(\thalf) < 0 \), therefore two peaks are not stable, see
  Theorem~\ref{stab-n-peaks}; two-peaks like solutions are only stable in the
  space of \( \thalf \)-periodic solutions (if $D$ is sufficiently small),
  see Proposition~\ref{1/n-periodic} and
  Theorem~\ref{theorem-linstab} for \( D = 0 \); note that \( c_2 > 0 \).
  For \( \gamma > \half \), \( V \) has a single simple zero and \( V'(\half) > 0 \),
  so for \( D = 0 \) two-peaks solutions are stable  by Theorem~\ref{stab-2peaks}.

  For \( D = \frac{1}{4 \pi} \approx 0.0796 \) and \( \gamma = 2 \),
  first and second eigenvalue of~\eqref{tp} are zero simultaneously.
  Therefore stationary solutions with two maxima of different height can be
  expected to exist, so-called `mixed mode solutions' (Golubitsky and
  Schaeffer~\cite{GolubitskySchaeffer}); Figure~\ref{sinsin2} (top left figure)
  shows such solutions.

  Figure~\ref{sinsin2} shows typical stationary solutions and their stability
  for \( \gamma = 2 \) and \( \gamma = 4 \).
  The $\thalf$-periodic stationary solutions are unstable (\( \gamma = 2 \)),
  or they become unstable with decreasing \( D \) (\( \gamma = 4 \)).
  This suggests that in general, the stability result for two peaks at \( D = 0 \)
  cannot be carried over to (very small) \( D > 0 \).
  However, solutions need much longer times at smaller \( D \) to move beyond states
  with two peaks of different height.
  E.g.,~for \( D \) of size of the order of~0.05, \( \gamma = 2 \), and starting with
  a small perturbation of \( f = 1 \), we get two~peaks of different heights in the
  first two time units, while convergence to the mixed-mode solution needs about
  30~time units;
  for \( D \approx 0.01 \) and \( \gamma = 2 \) as well as \( \gamma = 4 \), these time
  scales change to one unit for initial pattern formation and several hundred units
  for convergence to one peak.

  In Figure~\ref{sinsin2} the stationary solutions were generated with the
  iteration method, and their stability was tested with the Fourier algorithm.
  The unstable \( \half \)-periodic steady states in Figure~\ref{sinsin2} are stable
  in the space of \( \half \)-periodic functions;
  they are also found with the Fourier-based algorithm if the simulation is started
  with a \( \half \)-periodic function.

  \begin{figure}
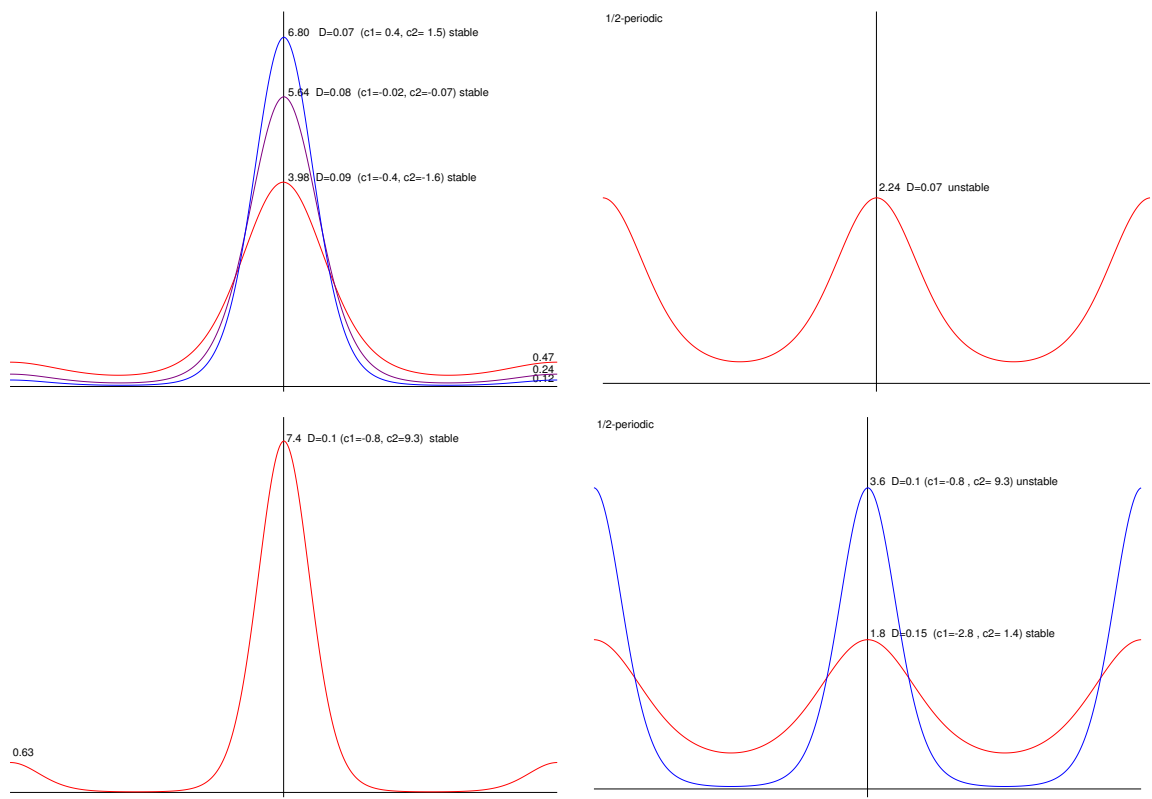

  { \Gr{bild}{0.48\textwidth} \hfill
    \Gr{bild2}{0.48\textwidth}\\[2ex]
    \Gr{bild5}{0.48\textwidth} \hfill
    \Gr{bild6}{0.48\textwidth}
  }
  \caption{\label{sinsin2}
  Stationary solutions for
  \( V(\theta) = \sin(2 \pi \theta) + \gamma \sin(4 \pi \theta) \)
  with \( \gamma = 2 \) (top) and \( \gamma = 4 \) (bottom)
  (\( V'(0) > 0, V'(\thalf) > 0, V(0.29) = 0 \) and \( V(0.27) = 0 \), respectively).
  (top left) Stable mixed mode solutions; the distance between the maxima is
  \( \half \). Note the backward bifurcation --- there is a stable stationary solution
  even though both eigenvalues $c_1$ and $c_2$ are negative!
  For \( D \ge 0.093 \) and \( \gamma = 2 \) we found no non-constant stationary
  solution.
  (top right) A \( \half \)-periodic stationary solution; it is unstable,
  but stable in the space of \( \half \)-periodic solutions.
  For \( D \ge \frac{1}{4 \pi} \) there are no non-trivial \( \thalf \)-periodic
  stationary solutions.
  (bottom left) Mixed mode solution with distance \( \half \) between the two maxima;
  for \( D = 0.15 \) we found no stationary mixed mode solution.
  (bottom right) \( \half \)-periodic solutions.
  In mixed mode solutions the larger maximum grows with decreasing \( D \) while the
  second maximum vanishes. In \( \thalf \)-periodic solutions the maxima grow with
  decreasing \( D \). In all cases the peaks become narrower.
  }
  \end{figure}
\end{example}

The second example shows non-trivial solutions when \( V'(0) \) and \( V'(\half) \)
are negative, and hence for zero diffusion coefficient neither one peak nor two peaks
at distance \( \thalf \) are stable by Theorem \ref{stab-n-peaks}.

\begin{example}
  Let
  \[ V(\theta) = \sin(2 \pi \theta) - \sin(4 \pi \theta) . \]
  Only the first eigenvalue is positive for small enough diffusion coefficient.
  Figure~\ref{negsin} shows how the stationary solution is approached.
  As one expects according to Corollary~\ref{twopeaks2} (which holds for \( D = 0 \)),
  for small diffusion coefficient it consists of two peaks with distance \( \theta_v \)
  (where \( V(\theta_v) = 0 \)). We checked numerically that indeed
  \( \tilde{V}'(0) = \half \, (V'(0) + V'(\theta_v)) > 0 \).

  This solution could be calculated only with the Fourier transformed system,
  since the iteration method does not converge --- it runs into a two-cycle.

  \begin{figure}
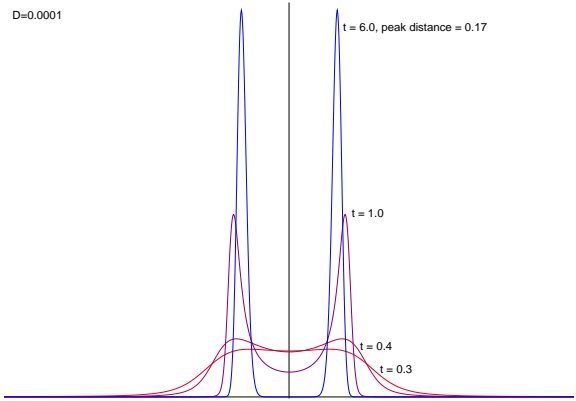

    \Gr{sinneg}{0.5\textwidth}
  \caption{\label{negsin} Development of the stationary solution for
  \( V(\theta) = \sin(2 \pi \theta) - \sin(4 \pi \theta) \)
  started with \( f(\theta,0) = 1 + 0.8 \, \cos(2 \pi \theta) \).
  \( \theta_v = 0.17 \) is the only non-trivial zero of \( V \).
  The numerical computation started with 20 Fourier coefficients and ended with 140. }
  \end{figure}
\end{example}

The next example shows that one-peak and two-peaks like solutions are possible
in the same model at the same parameter values.
It is interesting that there exists a one-peak like solution although the first
eigenvalue (of the linearization near the homogeneous solution) is negative for all
parameter values.

\begin{example}
  Let
  \[ V(\theta) = \sin(4 \pi \theta) + \gamma \sin(6 \pi \theta) . \]
  For \( -\tfrac{2}{3} < \gamma < \tfrac{2}{3} \) the turning rate \( V \) has
  a single zero on \( \left]0,\half\right[ \), \( V'(0) > 0 \) and \( V'(\thalf) > 0 \);
  also, \( V'(\tfrac{1}{3}) > 0 \) for \( \gamma > \tfrac{1}{3} \).
  If \( \gamma > 0 \), then for all \( \delta > 0 \) there is an \( \eps > 0 \)
  such that
  \( \int_0^{\theta} V(\psi) \, d\psi > \eps \) for \( \delta < \theta \le \half \).
  Theorem 7.3.~in Primi et~al.~\cite{Primi} shows that a one-peak like solution exists
  for small enough \( D \).
  The second eigenvalue \( c_2 \) is positive for \( D < \tfrac{1}{8 \pi} \),
  the third eigenvalue \( c_3 \) is positive for \( D < \tfrac{\gamma}{12 \pi} \),
  which is \( \approx 0.013 \) for \( \gamma = 0.5 \).

  Figure~\ref{sin2sin3} shows stationary solutions (left side; calculated with the
  iteration scheme) and how they are approached in time (right side; Fourier based
  program).
  We see that one-peak and two-peaks like solutions are locally stable for
  small enough \( D \) and \( \gamma = 0.5 \).
  The one-peak like solution develops when the initial distribution is sufficiently
  centered (compare Theorem~\ref{smallsupport}), but in the simulations \( f(0,\cdot) \)
  did not have compact support.
  The three-peaks solution is stable in the subspace of \( \tfrac{1}{3} \)-periodic
  functions; it is unstable for \( D = 0.01 \) (data not shown);
  For \( D = 0.001 \) a solution with three peaks of different height, of which only
  two had distance \( \tfrac{1}{3} \), developed when the simulation was started with
  a perturbed three-peaks like function.
  Note that \( V_{(3)} \) actually satisfies the 3-peaks stability conditions of
  Theorem~\ref{theorem-stab-n-peaks}.

  With small diffusion coefficient the `typical' outcome of a simulation that
  is started with small deviations from \( f = 1 \) are one large and one small peak
  that are opposite.
  We suppose that for \( D > 0 \) these become equally high for large times;
  the smaller \( D \) is, the more time will be needed for that.
\end{example}

\begin{figure}
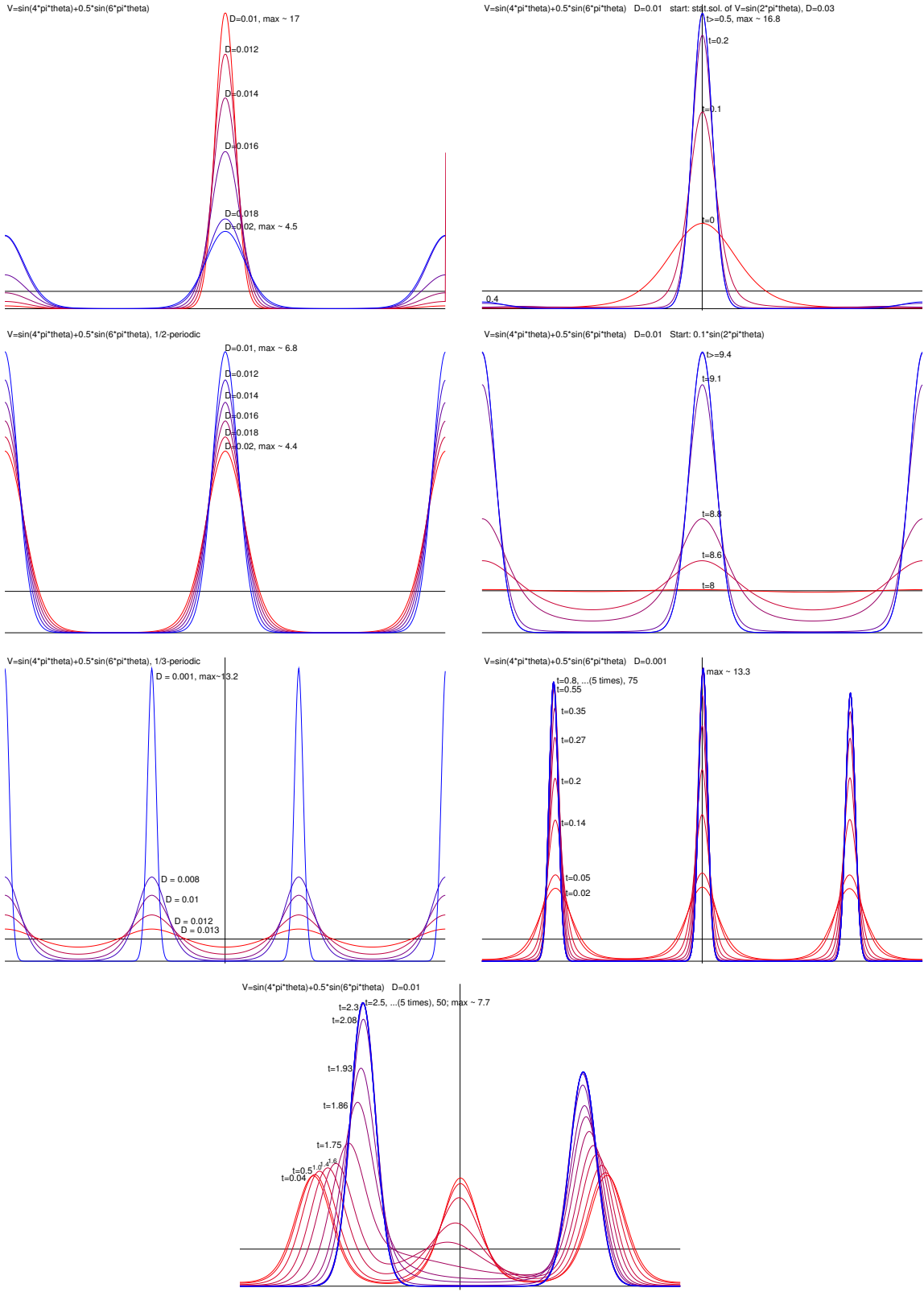

 \Gr{sin2sin3-1}{0.48\textwidth} \hfill
 \Gr{sin2sin3-t}{0.48\textwidth}\\[2ex]
 \Gr{sin2sin3-2}{0.48\textwidth} \hfill
 \Gr{sin2sin3-t2}{0.48\textwidth}\\[2ex]
 \Gr{sin2sin3-3}{0.48\textwidth}  \hfill
 \Gr{sin2sin3-t3}{0.48\textwidth} \\[2ex]
 \Gr{sin2sin3-t4}{0.48\textwidth}  \hfill
 \caption{\label{sin2sin3}
 \( V(\theta) = \sin(4 \pi \theta) + 0.5 \sin(6 \pi \theta) \).
 }
\end{figure}

{\bf Caption for Figure~\ref{sin2sin3}.} \quad
 Top row (left):
 These stationary solutions for various \( D \)-values
 were calculated with the iteration algorithm;
 for \( D >\approx 0.02\) the solutions look \( \thalf \)-periodic;
 (right): A one-peak like solution with a small second maximum develops fast
 when the simulation is started with a centered distribution;
 here we started with the stationary solution for
 \( V(\theta) = \sin(2 \pi \theta) \),
 \( D = 0.03 \).
 \newline
 Second row (left):
 These \( \thalf \)-periodic stationary solutions for various \( D \)-values
 were calculated with the iteration algorithm with forced \( \thalf \)-periodicity.
 (right) Approximation in time of a \( \thalf \)-periodic stationary solution.
 The simulation was started with \( f(0,\theta) = \phi \sin(2 \pi \theta) \),
 \( \phi = 0.1 \) (start not shown).
 Until \( t \approx 8 \) the distribution converges towards the
 homogeneous distribution (this is what it has to do --- any initial distribution
 that is orthogonal to the modes occurring in~$V$ dies out),
 then triggered by some numerical noise,
 instability of the constant solution takes over,
 and the second mode begins to grow.
 \newline
 Third row (left): 
 \( \tfrac{1}{3} \)-periodic solutions were calculated with the iteration scheme
 with forced \( \tfrac{1}{3} \)-periodicity.
 (right) The simulation was started with a small perturbation
 (\( \Re f_1(0) = 0.01 \), \( \Re f_2(0) = 0.005 \)) of the
 \( \tfrac{1}{3} \)-periodic solution
 for \( D = 0.01 \). A distribution with three slightly different peaks develops
 very fast, where the distances between first/second and first/third peak are
 {\em not} \( \tfrac{1}{3} \); then no further changes are discernible.
 \newline
 Bottom row: `Typical' result of a simulation,
 here started with the \( \tfrac{1}{3} \)-periodic solution for \( D = 0.01 \)
 which was perturbed by \( \Re f_1(0) = -0.01 \), \( \Re f_2(0) = -0.005 \).
 A distribution with two different maxima at distance \( \thalf \) develops fast,
 then no further changes are discernible.\\
{\em End of caption for Figure~\ref{sin2sin3}}

The last example shows that a variety of behaviors is possible if \( V < 0 \)
on \( \left]0,\half\right[ \).

\begin{example} \label{Ex:last}
  We compare
  \[ V_{(2)}(\theta) = \sin(4 \pi \theta) - \gamma \, \sin(2 \pi \theta)
      \quad (\gamma > 2 )
      \qquad \text{and} \qquad
    V_{(3)}(\theta) = \sin(6 \pi \theta) - \gamma \, \sin(2 \pi \theta)
      \quad (\gamma > 3) .
  \]
  In both cases \( V < 0 \) on \( \left]0,\half\right[ \), \( V'(0) < 0 \)
  and \( V'(\half) > 0 \).
  For \( V_{(2)} \) only the second eigenvalue is positive for
  \( D < \frac{1}{8 \pi} \);
  for \( V_{(3)} \) only the third eigenvalue is positive for
  \( D < \frac{1}{12 \pi} \);
  all other eigenvalues are negative.
  We have
  \( V_{(2),2} = 2 \, \sin(4 \pi \theta) > 0 \) on \( \left]0,\frac{1}{4}\right[ \)
  and
  \( V_{(3),3} = 3 \, \sin(6 \pi \theta) > 0 \) on \( \left]0,\frac{1}{6}\right[ \);
  all other \( V_{(j),n} \) are zero.

  Therefore we expect (at small enough diffusion coefficient \( D \)) for \( V_{(2)} \)
  stationary solutions with two equal maxima at distance \( \half \),
  and for \( V_{(3)} \) three equal maxima with distance \( \frac{1}{3} \).
  These develop indeed, but the time scales are interesting, see Figure~\ref{sinsin3}.
  Two, resp.~three different maxima develop very quickly but at unexpected distances;
  development toward equal distances and heights can be a very slow process.
  The explanation is that for both \( V \) there are orbits of other
  stationary solutions when \( D = 0 \) ---
  for \( V_{(2)} \) two peaks whose masses add to~1;
  for \( V_{(3)} \) three peaks whose positions and masses satisfy~\eqref{ode}, namely
  \(   0 = \sum_{k=1}^3 m_k V(\theta_j(t) - \theta_k(t)) \)
  for \( j=1,2,3 \) and \( m_1 + m_2 + m_3 = 1 \)
  (\( \theta_j \in S^1 \), \( m_j > 0 \)).
\end{example}

\begin{figure}
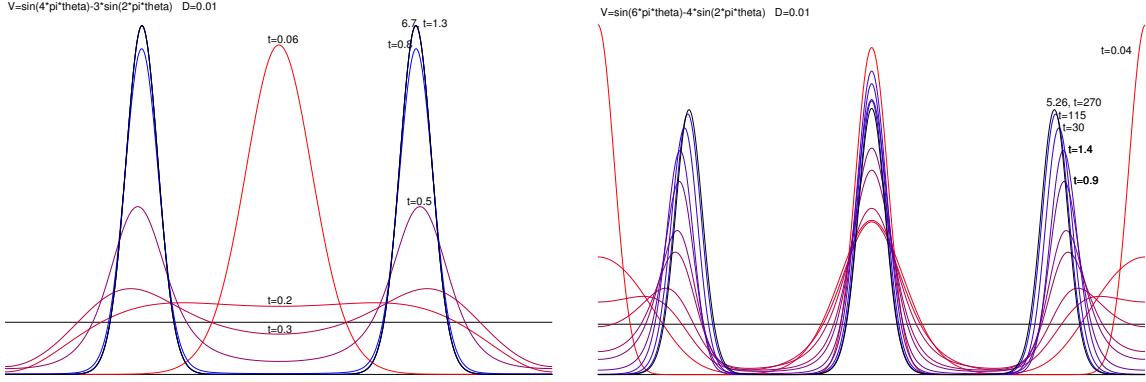

 \Gr{sin2sin}{0.48 \textwidth} \hfill
 \Gr{sin3sin}{0.48 \textwidth}
 \caption{\label{sinsin3}
 \( V(\theta) = \sin(4 \pi \theta) - 3 \, \sin(2 \pi \theta) \) (left),
 \( V(\theta) = \sin(6 \pi \theta) - 4 \, \sin(2 \pi \theta) \) (right)
 \( D = 0.01 \).
 Initial condition were one (left) and two (right) sharp peaks.
 Solutions were calculated with the Fourier-based algorithm.
 The iteration algorithm does not converge for these \( V \)'s
 (it runs into two-cycles);
 however, it converges to the shown stationary solutions if \( V_{(2),2} \)
 and \( V_{(3),3} \)
 and \( \half \)- and \( \frac{1}{3} \)-periodicity are used, respectively.
 }
\end{figure}


\section{Discussion}
\label{discussion}

For the transport-diffusion equation~\eqref{tp} a wide variety of different patterns
has been observed. Indeed, only a limited number could be shown in the last section.
It emerges that it is nearly impossible to predict pattern formation only by knowing
the shape of \( V \); however, if one compiles information like the sign of the
eigenvalues \( c_k \), the zeros of \( V\!\), the signs of \( V'(\frac{j}{n}) \)
and the shapes of the \( V_n \) and of \( \int_0^\theta V_n(\psi) \, d\psi \),
then the picture becomes clearer.

If there is no diffusion, then we know quite something about the stability
or otherwise of peak solutions.
Stability of \( n \) peaks shows itself often also in the `short-time' behavior
of solutions of the diffusion-transport equation at small diffusion.
Therefore it is hard to clarify numerically whether a given stationary solution
is stable for small diffusion.
It is an open and interesting problem how to clarify the stability of stationary
solutions if diffusion is present and if several eigenvalues are positive.

A possible interest in the transport-diffusion equation (TDE) comes from its relation
to the following integro-differential equation (IDE) for a function
\( f : [0,\infty[ \times S^1 \to \R^+\):
\begin{equation}
\label{IDE}
 \frac{\partial\!f}{\partial t}(t,\theta)
   = - M f(t,\theta)
         + \int_{S^1} \int_{S^1} G_{\sigma}(\theta-\theta_o - V(\theta_i-\theta_o)) 
              \, f(t,\theta_i) \, f(t,\theta_o) \, d\theta_o \, d\theta_i ,
\end{equation}
where \( M = \int_{S^1} f(0,\theta) \, d\theta\), \( \sigma > 0 \),
\( G_\sigma : S^1 \to \R^+ \) is the periodic Gaussian
with \( \int_{S^1} G_\sigma(\theta) \, d\theta = 1 \),
\[ G_\sigma(\theta) =
    \frac{1}{\sqrt{2 \pi} \sigma} \sum_{n \in \Z} 
      e^{-\frac{1}{2} \left( \frac{\theta + n}{\sigma} \right)^2 } ,
\]
and the turning function \( V : S^1 \to S^1 \) is odd.

This IDE describes a jump process in which particles at an old orientation
\( \theta_o \) interact over \( S^1 \) with particles in \( \theta_i \)
and jump to a new position \( \theta = \theta_o + V(\theta_i-\theta_o) \).
The precision of the jump is measured by \( \sigma \).
Note that for the IDE \( V \) is a turning (therefore it maps to~$S^1$),
while the function \( V \) for the TDE is a velocity and takes real values
that can be arbitrarily large.
If, e.g.,~\( V(\psi) = \psi \) in the IDE, then all solutions
converge for \( t \to \infty \) to the constant solution (Geigant~\cite{Geigant99}),
while the TDE has non-constant stable stationary solutions.

Both equations preserve mass, positivity, axial symmetry and any periodicity,
and both are invariant under translations and reflections.
The \( SO(2) \)-invariance makes linearization and calculation of eigenvalues
near the stationary homogeneous solution possible, as well as the fast numerical
calculation of solutions by Fourier transforming the equation into a system of ODEs
(see Geigant and Stoll~\cite{GeigantStoll} for the IDE).

Let \( M = 1 \).
If \( V = 0 \), then the solutions of both systems converge to the constant~1
as \( t \to \infty \) (the TDE is the linear diffusion equation,
the IDE a linear jump process).
If \( D \) or \( \sigma \) are large compared to \( V\!\), solutions also converge to~1
(Theorem~\ref{thm:stability} for the TDE; Geigant~\cite{Geigant99} for the IDE).
Therefore, if \( V \) is small, then $D$ and~\( \sigma \), resp., must be very small
for pattern formation.
On the other hand, if \( V = 0 \) and \( D = 0 \) or \( \sigma = 0 \), resp.,
then \( \frac{\partial\!f}{\partial t} = 0 \), therefore nothing happens.
Hence, if \( V \) as well as \( D \) or \( \sigma \) are very small,
then pattern formation occurs very slowly (if at all).
Last but not least, if \( D = 0 \) or \( \sigma = 0 \) but \( V \neq 0 \),
the limiting equations of both equations have delta distributions as solutions.

This said, we assume that \( \sigma \) and \( V \) are very small,
and we use Taylor expansion in \( \sigma, V \) to get
\[ G_{\sigma}(\theta-\theta_o - V(\theta_i-\theta_o))
   = \delta(\theta-\theta_o) - V(\theta_i-\theta_o) \delta'(\theta-\theta_o)
     + \frac{\sigma^2}{2} \delta''(\theta-\theta_o) + O\bigl((\sigma^2+|V|)^2\bigr) .
\]
Plugging this right hand side into~\eqref{IDE} yields the transport-diffusion
equation~\eqref{tp} with \( D = \sigma^2/2 \), because
\begin{align*}
  \int_{S^1} \delta(\theta-\theta_o) f(\theta_i) f(\theta_o) \,d\theta_i\,d\theta_o
   &= f(\theta) , \\
  \int_{S^1} \int_{S^1} \delta'(\theta-\theta_o) (V(\theta_i-\theta_o)
           f(\theta_o)) \,d\theta_o \: f(\theta_i)\,d\theta_i
   &= \int_{S^1} \frac{d}{d \theta_o} \bigl(V(\theta_i-\theta_o)
                                       f(\theta_o)\bigr)\Bigm|_{\theta_o=\theta}
                 f(\theta_i) \,d\theta_i  \\
   &= -\int_{S^1} \bigl(V'(\theta_i-\theta) f(\theta)
                        - V(\theta_i-\theta) f'(\theta)\bigr)
                  f(\theta_i) \,d\theta_i \\
   &= -\frac{d}{d \theta} \bigl((V*f)(\theta) \, f(\theta)\bigr) ,
   \intertext{and}
 \int_{S^1} \delta''(\theta-\theta_o) f(\theta_o) f(\theta_i) \,d\theta_o\,d\theta_i
   &= \int_{S^1} f(\theta_i) \, d\theta_i \: f''(\theta)
    = f''(\theta) .
\end{align*}
Different arguments for this derivation are given in Mogilner and
Edelstein-Keshet~\cite{Mogilner1} and in Primi et al.~\cite{Primi}.

Similarly, for the eigenvalues \( \tilde{c}_k \) of~\eqref{IDE}
(see Geigant and Stoll~\cite{GeigantStoll}), Taylor expansion with
small \( V \) yields
\begin{align*}
 \tilde{c}_k
   &= -1 + 4 G_{\sigma,k} \int_{0}^{\half} \cos(\pi k \psi)
                            \cos\bigl(2 \pi k (\thalf \psi - V(\psi))\bigr) \,d\psi \\
   &\approx -1 + 4 G_{\sigma,k} \int_0^{\half} \cos(\pi k \psi) \cos(\pi k \psi)\,d\psi
            + 8 \pi^2 k^2 G_{\sigma,k}
            \int_0^\half \cos(\pi k \psi) \sin(\pi k \psi) V(\psi) \,d\psi \\
   &= (-1 + G_{\sigma,k})
       + 4 \pi^2 k^2 G_{\sigma,k} \int_{0}^{\half} V(\psi) \sin(2 \pi k \psi) \,d\psi \\
   &\stackrel{\sigma \to 0}{\longrightarrow}
      4 \pi^2 k^2 \int_{0}^{\half} V(\psi) \sin(2 \pi k \psi) \,d\psi ,
\end{align*}
because the \( k \)-th Fourier coefficient $G_{\sigma,k}$ tends to~1 as
\( \sigma \to 0 \).
Because \( c_k = 4 \pi k \int_0^{\half} V(\theta) \sin(2 \pi k \theta) \, d\theta \)
for \( D = 0 \) (see~\eqref{fou_k}), the signs of the eigenvalues of both models agree
for small enough \( V\!\), \( D \) and \( \sigma \) (similar arguments were given by
I.~Primi, personal communication).
We stress again that for larger \( V \) or \( D, \sigma \),
the signs of the eigenvalues may differ.

But we see for example that for both models there are turning functions \( V \)
that are negative on \( \left]0,\thalf\right[ \) but lead to non-trivial patterns,
see the Example~\ref{Ex:last} in Section~\ref{examples}.
Especially for the IDE this was a surprise to us.
Only the eigenvalue of the first mode in the IDE is always negative if \( V \)
is negative, which may correspond to the result of Primi et~al.~\cite{Primi} that
there are no one-peak like solutions for small diffusion
if \( \int_0^\theta V(\psi) \,d\psi \) is negative somewhere.

The formulas for the eigenvalues show also that higher modes have larger eigenvalues
for the IDE (\( k^2 \) versus \( k \) in TDE).
This explains perhaps why in simulations of the IDE at small \( \sigma \) we see
the initial formation of several peak-like maxima much more often than in simulations
of the TDE with small diffusion \( D \).

Both equations have limiting equations for \( D \to 0 \) and \( \sigma \to 0 \),
respectively.
For \( \sigma = 0 \) we have \( G_\sigma = \delta_0 \), and the limiting equation is
\begin{equation} \label{sigma=0}
  \frac{\partial\!f}{\partial t}(t,\theta)
    = - f(t,\theta) + \int_{S^1} f(t,\theta-V(\psi)) f(t,\theta+\psi-V(\theta))\,d\psi.
\end{equation}
In Geigant~\cite{Geigant2000} it is shown that for \( \sigma \to 0 \) the solutions
of the IDE converge to those of the limiting equation on fixed finite time intervals.

For both limiting equations a single peak is a stationary solution,
which is linearly stable if \( V \) is attracting.
`Attraction' in the case of the IDE means \( 0 < V(\theta) < \theta \)
for \( 0 < \theta < \thalf \) (see Geigant~\cite{Geigant2000}%
\footnote{In Theorem 3.1.\ of~\cite{Geigant2000} there is a typing mistake:
`attracting' must be defined as given here and in the definition on page~1211 in~\cite{Geigant2000}.}),
and in the case of the TDE \( V(\theta) > 0 \), \( V'(0) > 0 \)
for \( 0 < \theta < \thalf \), see Theorem~\ref{theorem-linstab}.
In both equations the perturbation may not extend to the opposite side of the peak
since particles located there cannot turn back (because \( V(\half) = 0 \)).

Two peaks with distance \( \thalf \) whose masses add up to 1 are also
a stationary solution for both limiting equations because \( V(\thalf) = 0 \).
For the IDE with \( \sigma = 0 \) Kang et~al.\ formulate theorems
on convergence of solutions to two opposite peaks if the initial distribution
is sufficiently localized, see Theorems 15 and~19 in~\cite{Kang}.
However, there is an estimate in both proofs, namely equations (24) and~(43),
where we cannot follow the argument --- they seem to bound a delta distribution
by a constant. Unfortunately, this estimate is very important for the proofs.
We have been informed by the authors that an erratum is in preparation.

For both equations the assumptions for convergence to two opposite peaks are
essentially an attracting shape of \( V \) near 0 (\( V > 0 \) to the right of~0,
\( V'(0) > 0 \), and for the IDE additionally \( V'(0) < 1 \) near 0) and near
\( \thalf \) (\( V < 0 \) to the left of~\( \thalf \), \( V'(\thalf) > 0 \), and
for the IDE additionally \( V'(\thalf) < 1 \)).

It is important to see that in both limiting equations there is {\em no}
`mass selection' toward equal masses of the peaks.
The open question is then on what time scales equalization of the peaks occurs
when \( \sigma \) or \( D \), resp., are positive.

The central {\em differences} between the two limiting equations for the TDE and~IDE
are as follows.
\begin{itemize}
  \item \( n \ge 2 \) initial peaks, i.e.,
        \( f(0,\cdot) = \sum_{k=1}^n m_k \delta(.-\theta_k) \) with $m_k > 0$,
        do not keep that form for the IDE
        (e.g., starting with two peaks in \( \theta_1, \theta_2 \),
        particles jump also to positions \( \theta_1 + V(\theta_2-\theta_1) \)
        and \( \theta_2 + V(\theta_1-\theta_2) \)).
  \item \( n \ge 3 \) peaks --- even if equidistant and with equal masses ---
        are in general {\em not} a stationary solution for the IDE.
\end{itemize}
Therefore, the IDE does not allow the `peak game' (see
Section~\ref{stability-of-position}; terminology by Mogilner et~al.~\cite{Mogilner2}).
Only if \( V(\tfrac{j}{n}) = 0 \) for \( j = 1,\ldots,n-1 \), then \( n \)
equidistant peaks with {\em arbitrary} masses are a stationary solution
of equation~\eqref{sigma=0}.
It is an educated guess that they are locally stable {\em up to redistribution of mass
and reorientation} if \( 0 < V'(\tfrac{j}{n}) < 1 \) holds for \( 0 \le j \le n-1 \).


\subsection*{Acknowledgments.}
We thank Dr.~Ivano Primi for many helpful explanations and discussions.
We also thank our son Robin Stoll for programming all interactive input,
output and plotting routines for the numerical schemes.


\end{document}